\newcommand\redsout{\bgroup\markoverwith{\textcolor{red}{\rule[0.5ex]{2pt}{0.5pt}}}\ULon}
\newlength\bshft
	\def\fakebold#1{\ThisStyle{\ooalign{$\SavedStyle#1$\cr%
  	\kern-\bshft$\SavedStyle#1$\cr%
  	\kern\bshft$\SavedStyle#1$}}}
\newcommand{\R}{\mathbb{R}}
\newcommand\Pk[2]{{ \mathbb{P}_{#1}{#2} }}
\newcommand\Pdk[2]{{ \mathbb{P}_{#1}^\mathrm{dc}{#2} }}
\newcommand\PPk[2]{{ \fakebold{\mathbb{P}}_{#1}{#2} }}
\newcommand\PPdk[2]{{ \fakebold{\mathbb{P}}_{#1}^\mathrm{dc}{#2} }}
\newcommand\BDMk[2]{{ \fakebold{\mathbb{BDM}}_{#1}{#2} }}
\newcommand{\helm}{{ \mathbb{P} }}
\newcommand{\stokes}{{ \mathbb{S} }}
\newcommand\HL[1]{{ \mathbb{P}{\rb{#1}} }}
\newcommand\HLh[1]{{ \mathbb{P}_h{\rb{#1}} }}
\newcommand\HLSV[1]{{ \mathbb{P}_h^\mathrm{SV}{\rb{#1}} }}
\newcommand\HLhd[1]{{ \mathbb{P}_h^\dvg{\rb{#1}} }}
\newcommand\HLhdc[1]{{ \mathbb{P}_h^0{\rb{#1}} }}
\newcommand{\ku}{{k_{\uu}}}
\newcommand{\kupr}{{k_{\uu}^{\mathrm{pr}}}}
\newcommand{\kppr}{{k_p^{\mathrm{pr}}}}
\newcommand{\kucl}{{k_{\uu}^{\mathrm{cl}}}}
\newcommand{\kpcl}{{k_p^{\mathrm{cl}}}}
\newcommand{\inv}{{ \mathrm{inv} }}
\newcommand{\dvg}{{ \mathrm{div} }}
\newcommand{\OMEGA}{{ \rb{\Omega} }}
\newcommand\Rey{\mbox{\textit{Re}}}
\newcommand{\Drm}{{ \mathrm{D} }}
\newcommand\Kin[1]{{ \mathcal{K}\rb{#1} }}
\newcommand\Ens[1]{{ \mathcal{E}\rb{#1} }}
\newcommand\Pal[1]{{ \mathcal{P}\rb{#1} }}
\DeclareMathOperator{\ip}{{\boldsymbol{\cdot}}}
\DeclareMathOperator{\Fip}{{\boldsymbol{:}}}
\DeclareMathOperator{\DIV}{\nabla\,\boldsymbol{\cdot}}
\DeclareMathOperator{\DIVh}{\nabla_\textit{h}\,\boldsymbol{\cdot}}
\DeclareMathOperator*{\argmin}{arg\,min}
\newcommand{\vort}{{\bm \omega}}
\newcommand{\ff}{{ \boldsymbol{f} }}
\newcommand{\gbld}{{ \boldsymbol{g} }}
\newcommand{\ee}{{ \boldsymbol{e} }}
\newcommand{\rr}{{ \boldsymbol{r} }}
\newcommand{\uu}{{ \boldsymbol{u} }}
\newcommand{\vv}{{ \boldsymbol{v} }}
\newcommand{\ww}{{ \boldsymbol{w} }}
\newcommand{\cc}{{ \boldsymbol{c} }}
\newcommand{\xx}{{ \boldsymbol{x} }}
\newcommand{\zz}{{ \boldsymbol{z} }}
\newcommand{\nn}{{ \boldsymbol{n} }}
\newcommand{\zero}{{ \boldsymbol{0} }}
\newcommand{\tend}{{ T }}
\newcommand{\gD}{{ \boldsymbol{g}_\Drm }}
\newcommand{\drm}{{ \mathrm{d} }}
\newcommand{\dx}{{ \,\drm\xx }}
\newcommand{\ds}{{\,\drm\boldsymbol{s}}}
\newcommand{\dtau}{{ \,\drm\tau }}
\newcommand{\drms}{{ \,\drm s }}
\newcommand{\eps}{{ \varepsilon }}
\newcommand{\lavg}{{ \big\{\hspace{-0.99ex}\big\{ }}						
\newcommand{\ravg}{{ \big\}\hspace{-0.99ex}\big\} }}		
\newcommand{\ljmp}{ \left\llbracket }	% no double {{ }}									
\newcommand{\rjmp}{ \right\rrbracket }	% no double {{ }}								
\newcommand\jmp[1]{{ \ljmp#1\rjmp }}										
\newcommand\avg[1]{{ \lavg#1\ravg }}
\newcommand\LTWO{{ \boldsymbol{L}^{2} }}
\newcommand\LPOne[1]{\boldsymbol{L}^{#1}} 
\newcommand\Lp[2]{{ L^{#1}{#2} }} 
\newcommand\LP[2]{{ \boldsymbol{L}^{#1}{#2} }}
\newcommand\Lpz[2]{{ L_0^{#1}{#2} }}
\newcommand\Wmp[3]{{ W^{#1,#2}{#3} }}
\newcommand\WMP[3]{{ \boldsymbol{W}^{#1,#2}{#3} }}
\newcommand\Hm[2]{{ H^{#1}{#2} }}
\newcommand\Hmz[2]{{ H_0^{#1}{#2} }}
\newcommand\HM[2]{{ \boldsymbol{H}^{#1}{#2} }}
\newcommand\HMh[2]{{ \boldsymbol{H}_h^{#1}{#2} }}
\newcommand\XX{{ \boldsymbol{X} }}
\newcommand\HDIV{{ \boldsymbol{H}{\rb{\dvg}} }}
\newcommand{\Hdiv}{{ \boldsymbol{H}{\rb{\dvg;\Omega}} }}		
\newcommand{\VV}{{ \boldsymbol{V} }}	
\newcommand{\WW}{{ \boldsymbol{W} }}	
\newcommand{\HH}{{ \boldsymbol{H} }}							
\newcommand{\Q}{{ Q }}
\newcommand{\T}{{ \mathcal{T}_h }} 
\newcommand{\F}{{ \mathcal{F}_h }}	
\newcommand{\FK}{{ \mathcal{F}_K }}	
\newcommand{\Fi}{{ \mathcal{F}_h^i }}								
\newcommand{\Fb}{{ \mathcal{F}_h^\partial }}		
\newcommand\rb[1]{{ \left(#1\right) }}
\newcommand\sqb[1]{{ \left[ #1 \right] }}
\newcommand\rsb[1]{{ \left(#1\right] }}
\newcommand\set[1]{{ \left\{ #1 \right\} }}
\newcommand\bra[1]{{ \langle #1 \rangle }}
\newcommand\abs[1]{{ \left\lvert#1\right\rvert }}
\newcommand\norm[1]{ \left\lVert#1\right\rVert }
\newcommand\nf[2]{{ \nicefrac{#1}{#2} }}
\newcommand{\tripnorm}[1]{{\left\vert\kern-\nulldelimiterspace\left\vert\kern-\nulldelimiterspace\left\vert #1
	\right\vert\kern-\nulldelimiterspace\right\vert\kern-\nulldelimiterspace\right\vert}}
\newcommand{\otoprule}{\midrule[\heavyrulewidth]}
\newcommand\restr[2]{{												
	\left.\kern-\nulldelimiterspace									
	#1
	\vphantom{\big|}
	\right|_{#2}
	}}				
\newcolumntype{N}{>{\centering\arraybackslash}m{.5in}}
\newcolumntype{G}{>{\centering\arraybackslash}m{2in}}
\DeclareSIUnit[number-unit-product = {}]\Q{~}
\DeclareSIPrefix\kilo{K}{3}
\DeclareSIPrefix\mega{M}{6}
\DeclareSIPrefix\giga{G}{9}
\DeclareSIPrefix\terra{T}{12}
\newcommand{\numQ}[1]{\SI[round-mode=figures,round-precision=3,zero-decimal-to-integer,exponent-to-prefix = true,scientific-notation = engineering]{#1}{\Q}\!}
\title[Pressure-robustness, high Reynolds numbers, Beltrami flows]{On high-order pressure-robust space discretisations, their advantages for incompressible high Reynolds number generalised Beltrami flows and beyond}
\author[N.R.\ Gauger]{\firstname{Nicolas} \middlename{R.} \lastname{Gauger}}
\address{Chair for Scientific Computing, TU Kaiserslautern, 67663 Kaiserslautern, Germany}
\email{nicolas.gauger@scicomp.uni-kl.de}
\author[A.\ Linke]{\firstname{Alexander} \lastname{Linke}}
\address{Weierstrass Institute, 10117 Berlin, Germany}
\email{alexander.linke@wias-berlin.de}
\thanks{A.\ Linke ORCID: \url{https://orcid.org/0000-0002-0165-2698}}
\author[P.W.\ Schroeder]{\firstname{Philipp} \middlename{W.} \lastname{Schroeder}}
\address{Institute for Numerical and Applied Mathematics, Georg-August-Universit\"at G\"ottingen, 37083 G\"ottingen, Germany}
\email{p.schroeder@math.uni-goettingen.de}
\thanks{P.W.\ Schroeder ORCID: \url{https://orcid.org/0000-0001-7644-4693}}
\keywords{
	incompressible Navier--Stokes,
    pressure-robust methods,
    Helmholtz--Hodge projector,
    Discontinuous Galerkin method,
    divergence-free $H$(div) finite elements,
    structure-preserving algorithms,
    high-order methods,
    (generalised) Beltrami flows,
    high Reynolds number flows,
    material derivative}
\subjclass{
	65M12; 	% Stability and convergence of numerical methods
	65M15; 	% Error bounds
	65M60; 	% Finite elements, Rayleigh-Ritz and Galerkin methods, finite methods
	76D05; 	% Navier-Stokes equations
	76D10; 	% Boundary-layer theory, separation and reattachment, higher-order effects
	76D17} 	% Viscous vortex flows
\definecolor{mediumblue}{RGB}{0,0,205}
\definecolor{forestgreen}{RGB}{34,139,34}
\definecolor{darkred}{RGB}{200,0,0}
\begin{document}

%------------------------------------------------------------------------------------------------
% Abstract
%------------------------------------------------------------------------------------------------
\begin{abstract}
An improved understanding of the divergence-free constraint for the incompressible Navier--Stokes equations leads to the observation that a semi-norm and corresponding equivalence classes of forces are fundamental for their nonlinear dynamics.
The recent concept of {\em pressure-robustness} allows to distinguish between space discretisations that discretise these equivalence classes appropriately or not.
This contribution compares the accuracy of pressure-robust and non-pressure-robust space discretisations for transient high Reynolds number flows, starting from the observation that in generalised Beltrami flows the nonlinear convection term is balanced by a strong pressure gradient. 
Then, pressure-robust methods are shown to outperform comparable non-pressure-robust space discretisations.
Indeed, pressure-robust methods of formal order $k$ are comparably accurate than non-pressure-robust methods of formal order $2k$ on coarse meshes.
Investigating the material derivative of incompressible Euler flows, it is conjectured that strong pressure gradients are typical for non-trivial high Reynolds number flows.
Connections to  vortex-dominated flows are established.
Thus, pressure-robustness appears to be a prerequisite for accurate incompressible flow solvers at high Reynolds numbers.
The arguments are supported by numerical analysis and numerical experiments.
\end{abstract}

%------------------------------------------------------------------------------------------------
\maketitle

%------------------------------------------------------------------------------------------------
%------------------------------------------------------------------------------------------------
\section{Introduction}	
\label{sec:Introduction}
%------------------------------------------------------------------------------------------------
%------------------------------------------------------------------------------------------------
Recently, it was revealed that entire families of convergent space discretisations for the incompressible Navier--Stokes equations
\begin{subequations} \label{eq:transient:navier:stokes}
	\begin{empheq}[left=\empheqlbrace]{alignat=2} 
  		\partial_t\uu - \nu \Delta \uu + \rb{\uu \ip \nabla} \uu + \nabla p & =  \ff, \\
    	\DIV \uu & = 0,		
	\end{empheq} 
\end{subequations}
may deliver inaccurate velocity solutions when strong pressure gradients develop, i.e.\ they suffer from a lack of {\em pressure-robustness} \cite{JLMNR:sirev, cmame:linke:merdon:2016, lr:2018}.
Nearly all classical mixed methods like the Taylor--Hood element or (`only' $\LP{2}{}$-conforming) Discontinuous Galerkin (DG) methods belong to these families.
Strong pressure gradients reflect strong forces of gradient type within the Navier--Stokes momentum balance, e.g., in the terms $\ff$, $\rb{\uu \ip \nabla} \uu$ or $\partial_t\uu$.

Indeed, the lack of {\em pressure-robustness} has been a rather hot research topic in the beginning of the history of finite element methods for CFD \cite{pfc:1989, ff:1985, st:1987, glcl:1980, DGT94, gerbeau:1997} --- sometimes called {\em poor mass conservation} --- and continued to be investigated for many years \cite{GLRW:2012, s:1997, ganesan:john:pressure:separation, SchroederEtAl18}, often in connection with the so-called \emph{grad-div stabilisation} \cite{FH88, olhl2009, CELR:2011, jjlr:2014, AkbasEtAl18, john:novo:2018}.
Also, in the geophysical fluid dynamics community and in numerical astrophysics {\em well-balanced} schemes have been proposed to overcome similar issues for related Euler and shallow-water equations, especially in connection to nearly-hydrostatic and nearly-geostrophic flows; cf., for example, \cite{CotterShipton:2012,CotterThuburn:2014, BottaKlein:2004,Klingenberg:2019}. 
~\\

However, only recently it was understood better that exactly the relaxation of the divergence constraint for incompressible flows, which was invented in classical mixed methods in order to construct discretely inf-sup stable discretisation schemes, introduces the lack of {\em pressure-robustness}, since it leads to a poor discretisation of the Helmholtz--Hodge projector \cite{lm:2018}. 
The reason is that the relaxation of the divergence constraint implies a relaxation of the $\LTWO$-orthogonality between discretely divergence-free velocity test functions and arbitrary gradient fields.
~\\

Fortunately, pressure-robust space discretisations behave in a robust manner when confronted with strong pressure gradients, and many different ways to construct such schemes have been found recently. 
To name only a few, inf-sup stable $\HM{1}{}$-conforming and divergence-free mixed methods \cite{zhang:2005}, inf-sup stable $\HDIV$-conforming DG methods \cite{CockburnEtAl07, Lehrenfeld10} and inf-sup stable $H^1$-conforming and nonconforming finite element methods (FEM), finite volume (FVM) methods, and Hybrid High Order methods (HHO) with appropriately modified velocity test functions \cite{Linke:2012, linke:cmame:2014, dPEL:2016, JLMNR:sirev, lmt:2016, cmame:linke:merdon:2016} are pressure-robust.
Moreover, also in the context of isogeometric analysis various pressure-robust discretisations have been developed \cite{BuffaEtAl11,EvansHughes13,EvansHughes13b}.
However, it is still not generally widely accepted in the numerical analysis community that {\em pressure-robustness} is simply a prerequisite for the accurate space discretisation of non-trivial Navier--Stokes flows.
~\\

Thus, the goals of this contribution are threefold:
\begin{enumerate}
\item It will be shown that the need for {\em pressure-robustness} emanates from an improved understanding of mixed methods and the divergence constraint in incompressible flows.
It is argued that the {\em divergence constraint} induces equivalence classes of forces that are connected to a semi-norm. The involved semi-norm, in turn, is connected to the Helmholtz--Hodge projector of a vector field and vanishes for arbitrary gradient fields.
\item It will be argued that exactly the quadratic nonlinearity of the incompressible Navier--Stokes equations is a major source for strong pressure gradients. An example are vortex-dominated flows with a typical balance of the centrifugal forces --- represented by the nonlinear convection term --- and the pressure gradient.
Then, the nonlinear convection term contains a strong gradient part in the sense of the Helmholtz--Hodge decomposition.
The corresponding pressure is strong and complicated to approximate due to the balance of a linear term (the pressure gradient) with a quadratic term (the nonlinear convection).
\item It will be demonstrated that pressure-robust schemes outperform non-pressure-robust schemes for entire classes of transient incompressible flows at high Reynolds numbers.
For generalised Beltrami flows and vortex-dominated flows it will be demonstrated that a pressure-robust scheme with polynomial order $k \geq 2$ for the discrete velocity will be comparably accurate to a non-pressure-robust scheme of order $2 k$ on coarse grids.
The astonishing factor $2$ in the possible reduction of the polynomial approximation order stems from the balance of the quadratic nonlinear term with the linear pressure gradient. 
\end{enumerate}

We only briefly remark that the question of an appropriate discretisation of the nonlinear convection term is intimately connected to the issue of numerical convection stabilisation techniques like upwinding or SUPG \cite{Riviere08,brooks:hughes:1982}.
With the help of generalised Beltrami flows, we will demonstrate that in real-world flows the nonlinear convection term can be strong, even if the dynamics of the flow is not convection-dominated at all, i.e.\ when measured in the appropriate semi-norm. 
Thus, our contribution opens the way to an improved understanding of convection stabilisation for incompressible Navier--Stokes flows.
Here, the notion of numerical {\em pseudo-dominant convection} is decisive, see Remark \ref{rem:pseudo:dominant:convection}.
~\\

The arguments will be supported by a comparative and paradigmatic numerical analysis of $\HM{1}{}$-conforming pressure-robust and non-pressure-robust space discretisations for transient incompressible Navier--Stokes flows. 
The analysis exploits essentially the following three observations \cite{alm:2018, lm:2018}:
\begin{itemize}
\item a pressure-robust space discretisation of the time-dependent Stokes equation for \emph{small viscosities} is essentially error-free on finite (sufficiently short w.r.t.\ the viscosity) time intervals, i.e., the approximation error of the initial values does not grow in time;
\item under the same conditions, classical space discretisations of the time-dependent Stokes problem only suffer from large gradient fields in the momentum balance (large pressures), and discrete velocity errors induced by gradient fields accumulate over time;
\item the nonlinear convection term is a major source for complicated pressure gradients.
\end{itemize}

Several numerical experiments will illustrate the theory.
In order to explicitly focus on space discretisation, in the practical examples always small time steps are chosen together with second-order time-stepping schemes.
Therefore, the error due to time discretisation is always negligible in this work.

%------------
\paragraph{Organisation of the article} 
As a basis for this work, Section \ref{sec:FluidDynamics} presents some fundamental reflections on the transient incompressible Navier--Stokes equations, which help to understand the significance of a pressure-robust space discretisation.
Among other things, we explain why equivalence classes of forces are important for Navier--Stokes flows, we introduce the notion of generalised Beltrami flows, and  we emphasise that the material derivative in incompressible Euler flows with $\ff=\zero$ is always a gradient field.
Afterwards, in Section \ref{sec:NSAndDiscretisation}, the time-dependent Navier--Stokes problem, its weak formulation, the Helmholtz--Hodge projector and its discrete counterpart are discussed in an $\HM{1}{}$-conforming FE setting.
Also for $\HM{1}{}$-conforming FEM, a comparative time-dependent $\LP{2}{}$ \emph{a priori} error analysis is presented in Section \ref{sec:ErrorAnalysisH1}.
Section \ref{sec:Taylor} discusses how and when non-pressure-robust high-order methods lose about half of their formal convergence order on coarse meshes.
In Section \ref{sec:GreshoH1}, the relevance of our considerations for vortex-dominated flows is explained showing numerical results for the Gresho vortex problem computed with $\HM{1}{}$-FEM. 
Moving to computationally much more versatile $\LP{2}{}$- and $\HDIV$-DG methods, Section \ref{sec:dGFEM} describes their space discretisation and the corresponding DG Helmholtz projectors.
The remainder of the work is dedicated to numerical experiments. 
While Section \ref{sec:ExperimentsKnownSol} deals with generalised Beltrami flows with exact solutions in 2D and 3D, in Section \ref{sec:KarmanBeltrami} we go beyond this and investigate the material derivative of a real-world flow: a von K\'arm\'an vortex street.
Finally, some conclusions are drawn and an outlook is given in Section \ref{sec:Conclusions}.
%------------

%------------------------------------------------------------------------------------------------
%------------------------------------------------------------------------------------------------
\section{Some background from fluid dynamics}	
\label{sec:FluidDynamics}
%------------------------------------------------------------------------------------------------
%------------------------------------------------------------------------------------------------

In this section, we will review some classical concepts from fluid dynamics and put them into perspective with regard to their importance in the subsequent parts of this work.

%------------------------------------------------------------------------------------------------
\subsection{Velocity-equivalence of forces}	
\label{sec:VelocityEquivalence}
%------------------------------------------------------------------------------------------------

The dynamics of the incompressible Navier--Stokes equations is driven by its vorticity equation
\begin{equation}
	\vort_t -\nu \Delta \vort + \rb{\uu \ip \nabla} \vort 
  		= \nabla \times \ff + \rb{\vort \ip \nabla} \uu,	
\end{equation}
which is formally derived from \eqref{eq:transient:navier:stokes} by applying the curl operator to the momentum balance and substituting $\vort \coloneqq \nabla \times \uu$ \cite{chorin:marsden:1993}.
Due to $\nabla \times \nabla \phi = \zero$, the two forces $\ff$ and $\ff + \nabla \phi$ induce the same velocity field $\uu$, independent of the scalar potential $\phi$. 
This leads to an equivalence class of forces, where two forces will be called {\em velocity-equivalent} if they differ only by an arbitrary gradient field, i.e.,
\begin{equation} \label{eq:equiv:classes}
	\ff \simeq \ff + \nabla \phi.
\end{equation}
Indeed, the gradient part (in the sense of the Helmholtz--Hodge decomposition) of any force $\ff$ in the Navier--Stokes momentum balance only determines the pressure gradient $\nabla p$.
In Section \ref{sec:NSAndDiscretisation} this purely formal argument is made precise by introducing the Helmholtz--Hodge projector and a semi-norm that is connected to it.
Though the concept of the {\em velocity-equivalence} of forces is relevant for all forces in the Navier--Stokes momentum balance, this contribution will mainly focus on the consequences for the nonlinear convection term $\rb{\uu \ip \nabla} \uu$ at high Reynolds numbers.

%------------------------------------------------------------------------------------------------
\subsection{Generalised Beltrami flows}	
\label{sec:GenBeltramiFlows}
%------------------------------------------------------------------------------------------------

Velocity-equivalence of forces is especially relevant in a specific, but very rich and important class of transient incompressible flows, namely generalised Beltrami flows.
E.g., as far as we know, all known {\em exact solutions} (with $\ff=\zero$) \cite{drazin:riley:2006} of the incompressible Navier--Stokes equations are Galilean-invariant to generalised Beltrami flows.
Some of them will be used for our numerical benchmarks below.
Generalised Beltrami flows are those flows, whose nonlinear convection term is velocity-equivalent to a zero-force, i.e., it holds
\begin{equation*}
	\rb{\uu \ip \nabla} \uu \simeq \zero.	
\end{equation*}
Thus, their velocity solution is likewise the solution of an incompressible Stokes problem --- with a different pressure.
The main observation for the understanding of generalised Beltrami flows is the following pointwise identity for the nonlinear convection term: 
\begin{equation} \label{eq:nonlin:identity}
  \rb{\uu \ip \nabla} \uu 
  	= \rb{\nabla \times \uu} \times \uu + \frac{1}{2} \nabla \abs{ \uu }^2 
    = \vort \times \uu + \frac{1}{2} \nabla \abs{ \uu }^2,	
\end{equation}
where $\vort \times \uu$ is usually called the \emph{Lamb vector}.

Thus, generalised Beltrami flows can be subdivided into three different subclasses:
\begin{enumerate}
\item The most famous generalised Beltrami flows are classical potential flows with $\uu = \nabla h$, where $h$ denotes a (possibly time-dependent) harmonic potential fulfilling $-\Delta h = 0$.
Since potential flows are irrotational, it holds $\vort = \nabla \times \uu = \nabla \times \rb{\nabla h} = \zero$ and the nonlinear convection term is a gradient field
\begin{equation}
	\rb{\uu \ip \nabla} \uu = \frac{1}{2} \nabla \abs{ \uu }^2,
\end{equation}
and the nonlinear convection term is balanced by a pressure gradient $\nabla p= -\frac{1}{2} \nabla \abs{ \uu }^2$.
\item The second subclass consists of Beltrami flows. 
Contrary to potential flows, they are not irrotational, i.e., it holds $\vort \not= \zero$, however it  holds $\vort \times \uu = \zero$, i.e., the vorticity vector of Beltrami flows is parallel to the velocity field.
They exist only in the three-dimensional case, because the vorticity of two-dimensional flows is always perpendicular to the velocity field. 
Again, the pressure gradient is given by $\nabla p= -\frac{1}{2} \nabla \abs{ \uu }^2$.
\item Finally, for {\em generalised Beltrami flows} the vorticity is neither zero, nor parallel to the flow field, but the Lamb vector is a gradient field
\begin{equation}
	\vort \times \uu = \nabla \phi.	
\end{equation}
Here, the pressure gradient is different, namely $\nabla p= - \nabla \left ( \frac{1}{2} \abs{ \uu }^2 + \phi \right )$.
\end{enumerate}
It should be remarked that the vorticity equation of a generalised Beltrami flow (with $\ff=\zero$) is linear and given by
\begin{equation}
  \partial_t\vort - \nu \Delta \vort  =  \zero.
\end{equation}

Thus, a generalised Beltrami flow with $\ff=\zero$ and time-independent boundary conditions presents a nearly-steady behaviour over long time intervals, at least for small kinematic viscosities $\nu \ll 1$.
As a first connection to {\em vortex-dominated flows}, we remark that the slow decay of {\em vortex structures} like the 2D planar lattice flow problem is modelled by such flows \cite{Schroeder2019}. 
For such a process, a steady Eulerian description is sufficient.

%------------------------------------------------------------------------------------------------
\subsection{Galilean invariance and the material derivative}
\label{sec:GalileanFlows}
%------------------------------------------------------------------------------------------------
In this short subsection, we briefly want to discuss the role of the divergence-free part of the nonlinear convection term $\rb{\uu \ip \nabla} \uu$. 
It enters the game whenever a steady Eulerian description is not sufficient anymore.
~\\

Recalling that the incompressible Navier--Stokes equations are Galilean-invariant, we start from a generalised Beltrami flow $(\uu_0, p_0)$, fulfilling
\begin{equation*}
  \partial_t\uu_0 - \nu \Delta \uu_0 + \rb{\uu_0 \ip \nabla} \uu_0 + \nabla p_0  =  \zero, \qquad 
  	\DIV \uu_0 = 0,		
\end{equation*}	
and add a constant velocity field $\ww_0$ such that one obtains a new flow field
\begin{equation*}
	\uu(t, \xx) = \ww_0 + \uu_0(t, \xx - t \ww_0).	
\end{equation*}
Below, the corresponding pressure will be demonstrated to be
\begin{equation*}
	p(t, \xx) = p_0(t, \xx - t \ww_0).	
\end{equation*}
Then, one computes
\begin{equation*}
\partial_t \uu(t, \xx)  
	= \partial_t \sqb{ \uu_0(t, \xx - t \ww_0) } 
    = \partial_t \uu_0(t, \xx - t \ww_0)
    - \rb{\ww_0 \ip \nabla} \uu_0(t, \xx - t \ww_0)
\end{equation*}
and
\begin{subequations} \label{eq:transformed:nonlin:conv}
\begin{align}
  \rb{\uu(t, \xx) \ip \nabla} \uu(t, \xx) 
  	& = \rb{\ww_0 + \uu_0(t, \xx-t \ww_0) \ip \nabla} \uu_0(t, \xx-t \ww_0) \\
    & = \rb{\ww_0 \ip \nabla} \uu_0(t, \xx - t \ww_0)
    	+ \rb{\uu_0(t, \xx-t \ww_0) \ip \nabla} \uu_0(t, \xx-t \ww_0).	
\end{align}
\end{subequations}
Therefore, for the material derivative of $\uu$ it holds
\begin{equation*}
 	\frac{\Drm \uu(t, \xx)}{\Drm t} \coloneqq
   		\partial_t \uu(t, \xx) +  \rb{\uu(t, \xx) \ip \nabla} \uu(t, \xx)
  		=  \partial_t \uu_0(t, \xx - t \ww_0) + 
   			\rb{\uu_0(t, \xx - t \ww_0) \ip \nabla} \uu_0(t, \xx - t \ww_0),	
\end{equation*}
which is invariant under the Galilean transformation.
Since it further holds
\begin{equation*}
	-\nu \Delta \uu(t, \xx)=-\nu \Delta \uu_0(t, \xx - t \ww_0)	
\end{equation*}
and $\DIV \uu(t, \xx) = 0$, the pair $(\uu(t, \xx), p(t, \xx))$ does indeed fulfil the incompressible Navier--Stokes equations with $\ff=\zero$.
~\\

Besides the gradient part $\rb{\uu_0(t, \xx-t \ww_0) \ip \nabla} \uu_0(t, \xx-t \ww_0)$ (due to the generalised Beltrami property of $\uu_0$), the transformed nonlinear convection term \eqref{eq:transformed:nonlin:conv} does contain the new contribution $\rb{\ww_0 \ip \nabla} \uu  = \nabla \uu_0(t, \xx - t \ww_0) \ww_0$.
For this contribution, it holds
\begin{equation*}
 	\DIV \rb{ \nabla \uu_0(t, \xx - t \ww_0) \ww_0 }
  		= \DIV \rb{ \rb{\ww_0 \ip \nabla} \uu }
  	 	= \rb{\ww_0 \ip \nabla} (\DIV \uu) = 0.	
\end{equation*}
Thus, the nonlinear convection term of flows that are Galilean-invariant to a generalised Beltrami flow contains {\em both a  divergence-free and a gradient-field} part.
The corresponding vorticity equation remains linear, but contains an additional linear convection term
\begin{equation}
  \partial_t\vort - \nu \Delta \vort + \rb{\ww_0 \ip \nabla} \vort =  \zero.
\end{equation}
By means of an example in the next subsection we will demonstrate that the divergence-free part of the nonlinear convection term is responsible for the transport of geometric structures in the flow (like vortices, vortex filaments, \ldots), while the gradient field part prevents the dispersion of geometric structures, thereby ensuring conservation of mass.
Thus, the gradient field part of the nonlinear convection term is of major importance for incompressible high Reynolds number flows, although it represents a challenge for non-pressure-robust space discretisations.

%------------------------------------------------------------------------------------------------
\subsection{Steady solutions of the incompressible Euler equations, Galilean invariance and the Euler material derivative}
\label{sec:EulerFlows}
%------------------------------------------------------------------------------------------------

In this subsection we will slightly go beyond generalised Beltrami flows with $\ff=\zero$ and discuss the limit case $\mathrm{Re} \to \infty$, leading to the incompressible Euler equations with $\ff=\zero$; that is,
\begin{subequations} \label{eq:transient:euler}
	\begin{empheq}[left=\empheqlbrace]{alignat=2} 
  		\partial_t\uu + \rb{\uu \ip \nabla} \uu + \nabla p & =  \zero, \\
    	\DIV \uu & = 0.		
	\end{empheq} 
\end{subequations}
First, we want to remind the reader that vortices, vortex lines and vortex filaments are the building blocks of fluid dynamics \cite{chorin:marsden:1993}. 
Vortex-like solutions can be obtained as steady solutions of the incompressible Euler equations \eqref{eq:transient:euler}, for which it holds
\begin{equation*}
	\rb{\uu \ip \nabla} \uu = - \nabla p.	
\end{equation*}
Thus, every steady solution $\uu$ of the incompressible Euler equations has a nonlinear convection term which is velocity-equivalent to a zero force,
\begin{equation*}
	 \rb{\uu \ip \nabla} \uu \simeq \zero,	
\end{equation*}
in the sense of corresponding equivalence classes of forces as introduced above.
In the frictionless Euler setting, there exist even steady solutions with a compact support like the famous Gresho vortex, see Section \ref{sec:GreshoH1}.
Indeed, for steady solutions of the incompressible Euler equations with a compact support mainly the centrifugal force and the pressure gradient balance, similar to a tornado. 
Self-evidently, the centrifugal force is modelled by the (quadratic) convection term of the incompressible Euler and Navier--Stokes equations.
We remark that steady solutions of the incompressible Euler equations can contain an immense amount of kinetic energy, which is contained in {\em rotational degrees of freedom}, though.
This rotational kinetic energy can be unleashed, whenever vortices or vortex filaments interact with each other
or interact with the boundary of the domain.
~\\

Since all considerations from Subsection \ref{sec:GalileanFlows} about the Galilean invariance of the incompressible Navier--Stokes equations are valid for the Euler equations as well, the divergence-free part of the nonlinear convection term leads to a transport of structures; cf.\ the example of the Gresho vortex in Section \ref{sec:GreshoH1} in the case $\ww_0 \neq \zero$.
~\\

Moreover, looking at \eqref{eq:transient:euler}, we recognise that for the material derivative of incompressible Euler flows with $\ff=\zero$ it holds
\begin{equation}
   \frac{\Drm \uu}{\Drm t} =
     \partial_t\uu + \rb{\uu \ip \nabla} \uu = -\nabla p,
\end{equation}
i.e., for the Euler material derivative one obtains
\begin{equation}
  \frac{\Drm \uu}{\Drm t} \simeq \zero.
\end{equation}
Thus, strong forces of gradient field type are typical for incompressible Euler and Navier--Stokes flows at high Reynolds numbers, e.g., due to a force balance of the nonlinear centrifugal force and a strong, nontrivial pressure gradient.
~\\

The next subsection serves to illustrate that strong and `complicated' gradient fields in the Navier--Stokes momentum balance lead to numerical errors for {\em non-pressure-robust} space discretisations, while {\em pressure-robust} discretisations behave well.

%------------------------------------------------------------------------------------------------
\subsection{Hydrostatics: Complicated pressure}
\label{sec:ComplicatedPressure}
%------------------------------------------------------------------------------------------------

Let us explain in more detail, what we usually mean by `complicated pressures'.
The most important point is that `complicated' is always meant \emph{compared to the velocity}.
In this sense, a complicated pressure in a particular flow is always a relative concept.
~\\

In hydrostatics (flow at rest, no-flow), the pressure usually balances an external (gradient) force field, which makes it a perfect example for presenting `complicated pressures'.
For ease of presentation, suppose we want to solve the incompressible Stokes problem ($\nu=1$), with a right-hand side forcing term $\ff=\nabla\phi / \int_\Omega \phi$.
Here, the normalisation is made to ensure comparable situations for different potentials $\phi$.
The left-hand side column in Figure~\ref{fig:no-flow-stokes} shows different potentials $\phi=y^\gamma$ for $\gamma=1,2,4,9$ in a domain $\Omega$ which resembles a glass geometry.
Note that the pressure behaves analogously as these potentials and thus, they can be considered `complicated' compared to the exact velocity solution $\uu=\zero$ in hydrostatics problems.
~\\

%------------
\begin{figure}[h]
\centering
	\includegraphics[width=0.9\textwidth]{no-flow/pdf/Stokes-no-flow.pdf} 
\caption{Stokes no-flow problem in a glass, demonstrating the concept of complex pressures and the advantages of using pressure-robust methods. The left-hand side column shows the potential $\phi=y^\gamma$ for $\gamma=1,2,4,9$ and the underlying triangular mesh for all computations. The other columns show velocity magnitude $\abs{\uu_h}$ for the non-pressure-robust $\protect\PPdk{k}{}/\Pdk{k-1}{}$ method and the pressure-robust and divergence-free $\protect\BDMk{1}{}/\Pdk{0}{}$ method.}
\label{fig:no-flow-stokes}
\end{figure}
%------------

Now, all other plots in Figure~\ref{fig:no-flow-stokes} show the velocity solution of different numerical methods for the particular problem.
The chosen methods are $\LTWO$- ($\PPdk{k}{}/\Pdk{k-1}{}$) and $\HDIV$-based ($\BDMk{1}{}/\Pdk{0}{}$) DG methods on triangular meshes of different order $k$ (polynomial order of the discrete velocity approximations), 
In the present context, it suffices to know that the former are non-pressure-robust whereas the latter are pressure-robust and divergence-free; cf.\ Section~\ref{sec:dGFEM} for more details.
~\\

One can see that the low-order ($k=1$) pressure-robust method computes the correct velocity solution $\uu_h=\zero$ \emph{independent} of the pressure/potential, even though the discrete pressure space only consists of piecewise constants.
The non-pressure-robust method, on the other hand, only leads to $\uu_h=\zero$ if $k-1\geqslant \gamma$, as in this situation the pair $\rb{\uu,p}$ is contained in the discrete FE spaces.
Moreover, one can see that whenever the non-pressure-robust method gives $\uu_h\neq\zero$, the quality of the solution decreases as $\gamma$ increases, i.e.\ as the pressure becomes more and more complicated.
On the other hand, increasing the order $k$ of the discretisation improves the solution; this is simply $k$-convergence.

%------------------------------------------------------------------------------------------------
%------------------------------------------------------------------------------------------------
\section{Time-dependent Navier--Stokes problem and ${\emph{\textbf{H}}^{\mathbf{1}}}$ discretisation} 
\label{sec:NSAndDiscretisation}
%------------------------------------------------------------------------------------------------
%------------------------------------------------------------------------------------------------

After a very brief introduction to the governing equations on the continuous level, we introduce the spatial $\HM{1}{}$-conforming discretisation schemes which will be used for the error analysis in the first part of this work.
They consist of an exactly divergence-free, pressure-robust method and a classical non-pressure-robust FEM.

%------------------------------------------------------------------------------------------------
\subsection{Infinite-dimensional Navier--Stokes equations}
%------------------------------------------------------------------------------------------------

We consider the time-dependent incompressible Navier--Stokes problem, which reads
\begin{subequations}\label{eq:TINS}
	\begin{empheq}[left=\empheqlbrace]{alignat=2} 
		\partial_t\uu - \nu\Delta \uu + \rb{\uu\ip\nabla}\uu +\nabla p &= \ff \qquad\quad 												&&\text{in }\rsb{0,\tend}\times\Omega,	\\
		\DIV\uu &= 0 				&&\text{in }\rsb{0,\tend}\times\Omega, 			\\
		\uu\rb{0,\xx} &=\uu_0\rb{\xx} 	&&\text{for } \xx\in\Omega.		
	\end{empheq} 
\end{subequations}

For the space dimension $d\in\set{2,3}$, $\Omega\subset\R^d$ denotes a connected bounded Lipschitz domain and $\tend$ is the end of time considered in the particular problem.
Since in the numerical analysis below we want to compare the best possible convergence rates for pressure-robust and classical space discretisations in the $\LPOne{2}{}$-norm, we will assume for technical reasons that $\Omega$ is convex, leading to elliptic regularity.
Moreover, $\uu \colon\sqb{0,\tend}\times\Omega\to\R^d$ indicates the velocity field, $p\colon\sqb{0,\tend}\times\Omega\to\R$ is the (zero-mean) kinematic pressure, $\ff\colon\sqb{0,\tend}\times\Omega\to\R^d$ represents external body forces and $\uu_0\colon \Omega\to\R^d$ stands for a suitable initial condition for the velocity. 
The underlying fluid is assumed to be Newtonian with constant (dimensionless) kinematic viscosity $0<\nu\ll 1$. 
We impose either the general Dirichlet boundary condition $\uu=\gD$ on $\rsb{0,\tend}\times\partial\Omega$, or periodic boundary conditions (or a mixture of them).

\paragraph{Notation}
In what follows, for $K\subseteq\Omega$ we use the standard Sobolev spaces $\Wmp{m}{p}{\rb{K}}$ for scalar-valued functions with associated norms $\norm{\cdot}_{\Wmp{m}{p}{\rb{K}}}$ and seminorms $\abs{\cdot}_{\Wmp{m}{p}{\rb{K}}}$ for $m\geqslant 0$ and $p\geqslant 1$. 
We obtain the Lebesgue space $\Wmp{0}{p}{\rb{K}}=\Lp{p}{\rb{K}}$ and the Hilbert space $\Wmp{m}{2}{\rb{K}}=\Hm{m}{\rb{K}}$. 
Additionally, the closed subspaces $\Hmz{1}{\rb{K}}$ consisting of $\Hm{1}{\rb{K}}$-functions with vanishing trace on $\partial K$ and the set  $\Lpz{2}{\rb{K}}$ of $\Lp{2}{\rb{K}}$-functions with zero mean in $K$ play an important role. 
The $\Lp{2}{\rb{K}}$-inner product is denoted by $\rb{\cdot,\cdot}_K$ and, if $K=\Omega$, we sometimes omit the domain completely when no confusion can arise. 
Furthermore, with regard to time-dependent problems, given a Banach space $\XX$ and a time instance $t$, the Bochner space $\Lp{p}{\rb{0,t;\XX}}$ for $p\in\sqb{1,\infty}$ is used.
In the case $t=\tend$, we frequently use the abbreviation $\Lp{p}{\rb{\XX}}=\Lp{p}{\rb{0,\tend;\XX}}$. 
Further, $C^1(0, t; \XX)$ denotes the function space mapping $\sqb{0, t}$ into $\XX$, which is continuously differentiable in time w.r.t.\ the norm  $\norm{\uu}_{C^1(0, t; \XX)} \max_{s \in  \sqb{0,\tend}} (\norm{\uu}_\XX + \norm{\uu_t}_\XX)$.
Spaces and norms for vector- and tensor-valued functions are indicated with bold letters. 
For example, for a vector-valued function $\vv=\rb{v_1,\dots,v_n}^\dag$, we consider $\norm{\vv}_\LP{p}{\OMEGA}^p=\sum_{i=1}^n \norm{v_i}_\Lp{p}{\OMEGA}^p =\int_\Omega \abs{\vv}_p^p \dx$, where $\abs{\vv}_p^p=\sum_{i=1}^n \abs{v_i}^p$. 
The vorticity of a 2D velocity field $\uu=\rb{u_1,u_2}^\dag$ is defined as $\omega =\partial_{x_1}u_2 - \partial_{x_2}u_1$.
~\\

Depending on the particular boundary conditions, let $\VV/\Q$ be the continuous solution spaces for velocity and pressure, respectively.
Note that it holds $\VV\subset\HM{1}{\OMEGA}$ and $\Q\subset\Lp{2}{\OMEGA}$.
For the numerical analysis in this and the next section, we will always choose $\VV=\HH^1_0\OMEGA$ and $Q_h=L^2_0\OMEGA$.
The subspace of weakly divergence-free functions is defined as
\begin{equation*}
	\VV^\dvg 
		= \set{\vv\in\VV\colon \rb{q,\DIV\vv}=0,~\forall\,q\in\Q}.
\end{equation*}

A weak velocity solution $\uu \in \Lp{2}{\rb{0,\tend;\VV^\dvg}}$ of \eqref{eq:TINS} fulfils that for all test functions $\vv \in \VV^\dvg$ holds
\begin{equation} \label{eq:weak:NSE}
  \frac{\drm}{\drm t} \rb{\uu(t), \vv}
  	+ \nu \rb{\nabla \uu(t), \nabla \vv}
  	+	 \rb{(\uu(t) \ip \nabla) \uu(t), \vv}
  	= \bra{\ff(t), \vv}_{\HH^{-1}, \HH^1_0}
\end{equation}

in the sense of distributions in $\mathcal{D}'(]0, T[)$ and such that $\uu(0) = \uu_0$ \cite{BoyerFabrie13}.
Note that the pressure $p$ is not part of the weak formulation of the incompressible Navier--Stokes problem, see Remark \ref{rem:weak:eq:forces}. 
For the numerical analysis, we will further assume the regularity $\uu \in \Lp{1}{\rb{\WMP{1}{\infty}{}}}$, ensuring, e.g., uniqueness of the weak solution in time \cite{bt:2013, SchroederLube17b}. 
Further (technical) regularity assumptions will be made at appropriate places in the contribution.
Then, $(\uu, p)$ fulfils
\begin{subequations} \label{eq:WeakContinuous}
	\begin{empheq}[left=\empheqlbrace]{align} 
	\text{Find }\rb{\uu,p}&\colon\rsb{0,\tend}\to\VV\times\Q
		\text{ with }\uu\rb{0}=\uu_{0}
		\text{ s.t., }\forall\,\rb{\vv,q}\in\VV\times\Q,\\
	\rb{\partial_t\uu,\vv}
		&+\nu \rb{\nabla\uu,\nabla\vv} 
		+ \rb{\rb{\uu\ip\nabla}\uu,\vv}
		- \rb{p,\DIV\vv}
		+ \rb{q,\DIV\uu}
		= \rb{\ff,\vv}.
	\end{empheq} 
\end{subequations}

%------------------------------------------------------------------------------------------------
\subsection{Helmholtz--Hodge decomposition in $\textbf{L}^\textbf{2}$}
%------------------------------------------------------------------------------------------------

In order to understand the significance of pressure-robustness for the discretisation theory of the incompressible Navier--Stokes equations \eqref{eq:TINS}, the concept of the Helmholtz--Hodge projector is introduced. 
Since the numerical analysis below is essentially an $\LP{2}{}$ analysis (assuming all forces $\partial_t\uu(t)$, $(\uu(t) \ip \nabla) \uu(t)$, \ldots to be in $\LP{2}{}$), we will restrict our considerations to the Helmholtz--Hodge decomposition in $\LP{2}{}$.
Below in this section, some functional analytic prerequisites are summarised that show that only the divergence-free parts, i.e., the Helmholtz--Hodge projectors of the forces in the Navier--Stokes momentum balance influence the velocity solution of the incompressible Navier--Stokes equations, see also \cite{JLMNR:sirev}. 
~\\

The space of square-integrable divergence-free (solenoidal) vector fields is defined by
\begin{equation} \label{eq:def:lp:sigma}
  \LPOne{2}_{\sigma}\OMEGA 
  	\coloneqq \set{ \ww \in \LPOne{2}\OMEGA\colon -\rb{\ww, \nabla \phi} = 0,~\forall\,\phi \in H^1(\Omega) }.
\end{equation}

%----------------------
\begin{remark} \label{rem:helm:bcs}
First note that for $\phi \in C^\infty_0\OMEGA$ the mapping $\phi \mapsto -(\ww, \nabla \phi)$ denotes the distributional divergence of $\ww$.
Thus, vector fields in $\LPOne{2}_{\sigma}\OMEGA$ are divergence-free \cite{JLMNR:sirev}.
Further note that definition \eqref{eq:def:lp:sigma} implies that $\restr{\ww\ip\nn}{\partial\Omega}=0$, where $\nn$ denotes the outer unit normal vector on $\partial\Omega$, since test functions $\phi \in H^1$ do not vanish on the boundary of $\Omega$.
In this context, please also note that a Helmholtz--Hodge decomposition is made unique only by prescribing certain boundary conditions. 
The reason is that any gradient of a harmonic function with $-\Delta h = 0$ is irrotational and divergence-free at the same time. 
Thus, the boundary conditions determine whether $\nabla h$ is called 'divergence-free' or 'gradient field'. 
In our setting, all gradients of harmonic functions are called 'gradient fields', and vector fields in $\LPOne{2}_{\sigma}\OMEGA$ are orthogonal to all gradient fields in $\LPOne{2}$.
\end{remark} 
%----------------------

%----------------------
\begin{remark} \label{rem:lp}
Our considerations regard the Helmholtz--Hodge decomposition in $\LP{2}{}$  of $(\uu \ip \nabla) \uu$. Since we assume that  $\uu \in \WW^{1, \infty}$, it holds $(\uu \ip \nabla) \uu \in \LP{2}{}$.
\end{remark}
%----------------------

Due to the special choice of the boundary conditions and Remark \ref{rem:helm:bcs} one obtains the following theorem, for which the proof will be repeated for completeness and readability of the manuscript from \cite{JLMNR:sirev}:
%-----------------------------------
\begin{theorem}[Helmholtz--Hodge decomposition in $\textbf{L}^\textbf{2}$]
For every vector field $\vv \in \LP{2}{\OMEGA}$, there exists a unique Helmholtz--Hodge decomposition 
\begin{equation} \label{helmholtz:hodge:decomposition}
	\vv = \ww + \nabla \psi,
\end{equation}
where it holds $\ww \in \LPOne{2}_{\sigma}(\Omega)$, and $\psi \in H^1\OMEGA$ and $\ww$ and $\nabla \psi$ are $\LPOne{2}$-orthogonal.
Then, $\ww \eqqcolon \helm(\vv)$ is called the Helmholtz--Hodge projector of $\vv$. 
\end{theorem}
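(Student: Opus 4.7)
The plan is to exhibit the scalar potential $\psi$ via a variational problem, define the solenoidal part by subtraction, and then verify the required properties and uniqueness in that order.

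First I would introduce the following Neumann-type variational problem on the quotient space $\Hm{1}{(\Omega)}/\R$ (equivalently, on $\Hm{1}{(\Omega)} \cap \Lpz{2}{(\Omega)}$ to fix the constant): find $\psi$ such that
\begin{equation*}
  (\nabla \psi, \nabla \phi) = (\vv, \nabla \phi) \qquad \forall\,\phi \in \Hm{1}{(\Omega)}.
\end{equation*}
The bilinear form $(\nabla\psi,\nabla\phi)$ is continuous on $\Hm{1}{(\Omega)}/\R$, and coercivity follows from the Poincaré--Wirtinger inequality (here is where convexity/Lipschitz regularity of $\Omega$ matters only in the sense of having a Poincaré constant). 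The right-hand side $\phi \mapsto (\vv,\nabla\phi)$ is a continuous linear functional on $\Hm{1}{(\Omega)}/\R$ since $\vv\in\LP{2}{(\Omega)}$. Lax--Milgram then yields a unique $\psi \in \Hm{1}{(\Omega)}/\R$.

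Next I would define $\ww \coloneqq \vv - \nabla\psi \in \LP{2}{(\Omega)}$ and check the three required properties. Membership in $\LPOne{2}_{\sigma}(\Omega)$ is immediate from the defining equation: for any $\phi \in \Hm{1}{(\Omega)}$,
\begin{equation*}
  -(\ww, \nabla \phi) = -(\vv,\nabla\phi) + (\nabla\psi,\nabla\phi) = 0,
\end{equation*}
which is exactly the definition \eqref{eq:def:lp:sigma}. The $\LPOne{2}$-orthogonality $(\ww, \nabla\psi) = 0$ follows from the same computation with the admissible choice $\phi = \psi$.

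Finally I would establish uniqueness. Suppose $\vv = \ww_1 + \nabla\psi_1 = \ww_2 + \nabla\psi_2$ are two decompositions satisfying the stated properties. Then $\ww_1 - \ww_2 = \nabla(\psi_2 - \psi_1) \in \LP{2}{(\Omega)}$, and $\psi_2 - \psi_1 \in \Hm{1}{(\Omega)}$ is an admissible test function in \eqref{eq:def:lp:sigma}, which gives $\norm{\nabla(\psi_2-\psi_1)}_{\LP{2}{}}^2 = (\ww_1-\ww_2, \nabla(\psi_2-\psi_1)) = 0$; hence $\nabla\psi_1 = \nabla\psi_2$ and consequently $\ww_1 = \ww_2$. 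The map $\helm\colon \vv\mapsto\ww$ is therefore well-defined. The main (mild) obstacle is the choice of function space for $\psi$: since only $\nabla\psi$ enters the decomposition, $\psi$ is intrinsically defined only modulo constants, so one has to work on $\Hm{1}{(\Omega)}/\R$ (or normalise by zero mean) to obtain coercivity and a well-posed variational problem — everything else is a direct consequence of the defining orthogonality in \eqref{eq:def:lp:sigma}.
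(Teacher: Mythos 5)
Your proposal is correct and follows essentially the same route as the paper: solve the Neumann problem $(\nabla\psi,\nabla\chi)=(\vv,\nabla\chi)$ on $\Hm{1}{\OMEGA}/\R$, set $\ww=\vv-\nabla\psi$, verify membership in $\LPOne{2}_{\sigma}\OMEGA$ and orthogonality from the defining equation, and prove uniqueness by testing the difference of two decompositions with $\nabla(\psi_1-\psi_2)$. The only difference is that you spell out the Lax--Milgram/Poincar\'e--Wirtinger justification of well-posedness, which the paper delegates to a citation.
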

%-----------------------------------

%-----------------
\begin{proof} A potential $\psi \in H^1(\Omega) / \mathbb{R}$ in the Helmholtz--Hodge decomposition is obtained by: for all $\chi \in H^1(\Omega) / \mathbb{R}$ holds
\begin{equation} \label{eq:weak:helm:problem}
  (\nabla \psi, \nabla \chi) = (\vv, \nabla \chi).
\end{equation}

This Neumann problem for $\psi$ is uniquely solvable \cite{JLMNR:sirev}.
Then, define $\ww \coloneqq \vv - \nabla \psi$.
One can test $\ww$ with arbitrary gradient fields $\nabla (\phi + C)$, where $C$ denotes an arbitrary real number and where it holds $\phi \in H^1\OMEGA/\mathbb{R}$.
Then, one obtains $(\ww, \nabla (\phi + C)) = (\ww, \nabla \phi)= (\vv - \nabla \psi, \nabla \phi) = 0$, due to \eqref{eq:weak:helm:problem}.
Thus, it holds $\ww \in \LPOne{2}_{\sigma}\OMEGA$.
Due to the definition of $\LPOne{2}_{\sigma}\OMEGA$, $\ww$ and $\nabla \psi$ are orthogonal in $\LPOne{2}$.
Assuming that $\vv = \ww_1 + \nabla \psi_1=\ww_2 + \nabla \psi_2$ are two Helmholtz--Hodge decompositions of $\vv$, then it holds
\begin{equation*}
  \ww_1 - \ww_2 = \nabla (\psi_2 - \psi_1)	
\end{equation*}

with $\ww_1 - \ww_2 \in \LPOne{2}_{\sigma}\OMEGA$.
Testing this equality by $\nabla (\psi_1-\psi_2)$ yields by the $\LPOne{2}$-orthogonality of \eqref{eq:weak:helm:problem}
\begin{equation*}
  \norm{\nabla (\psi_2 - \psi_1)}^2_{\LPOne{2}\OMEGA}=0,
\end{equation*}

and one concludes $\ww_1=\ww_2$ and $\psi_1=\psi_2$ using $\psi_1, \psi_2 \in H^1(\Omega)/\mathbb{R}$.
Thus, the Helmholtz--Hodge decomposition is unique.
\end{proof}
%-----------------

%-----------------
\begin{remark}
Formally, the Helmholtz--Hodge decomposition of $\vv\in\LP{2}{\OMEGA}$ can be written as the solution of the PDE problem
\begin{subequations} \label{eq:HelmholtzLeray}
	\begin{empheq}[left=\empheqlbrace]{alignat=2} 
		 \HL{\vv} + \nabla \psi &= \vv \qquad\quad 												&&\text{in }\Omega,	\\
		\DIV\HL{\vv} &= 0 				&&\text{in }\Omega, 			\\
		\HL{\vv}\ip\nn &=0 	&&\text{on } \partial\Omega.		
	\end{empheq} 
\end{subequations}	
\end{remark}
%-----------------

The most important property of the Helmholtz--Hodge projector for our contribution is given as follows:
%----------------- 
\begin{lemma} \label{lem:helm:vanishes:for:gradients}%
For all $\psi \in H^1\OMEGA$,
it holds
\begin{equation*}
	\helm(\nabla \psi) = \zero.	
\end{equation*}
\end{lemma}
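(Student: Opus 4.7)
The plan is to exploit uniqueness of the Helmholtz--Hodge decomposition proved in the preceding theorem. Given $\vv = \nabla \psi$ with $\psi \in H^1(\Omega)$, I would exhibit an explicit decomposition of $\vv$ of the form \eqref{helmholtz:hodge:decomposition} in which the solenoidal part is manifestly zero, and then invoke uniqueness to conclude.

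Concretely, first I would observe that $\zero \in \LPOne{2}_{\sigma}(\Omega)$, since by the definition \eqref{eq:def:lp:sigma} one trivially has $-(\zero, \nabla \phi) = 0$ for every $\phi \in H^1(\Omega)$. Next, $\psi$ itself lies in $H^1(\Omega)$ by hypothesis, so the splitting
\begin{equation*}
  \nabla \psi = \zero + \nabla \psi
\end{equation*}
is a valid Helmholtz--Hodge decomposition of $\vv = \nabla \psi$, in the exact sense of the theorem: the first summand belongs to $\LPOne{2}_{\sigma}(\Omega)$, the second is the gradient of an $H^1$-function, and they are trivially $\LPOne{2}$-orthogonal. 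By the uniqueness statement just proved, this must coincide with the decomposition $\vv = \helm(\vv) + \nabla(\text{potential})$, so $\helm(\nabla \psi) = \zero$.

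There is essentially no hard step here; the only subtlety to check is that the class of admissible potentials in the Helmholtz--Hodge theorem is broad enough to accommodate $\psi$. Since the decomposition theorem is formulated for potentials in $H^1(\Omega)$ (the Neumann problem \eqref{eq:weak:helm:problem} is posed in $H^1(\Omega)/\mathbb{R}$), and $\psi \in H^1(\Omega)$ by assumption, this poses no issue. Thus the entire argument is a one-line appeal to uniqueness of the decomposition, and no calculation beyond verifying membership of $\zero$ in $\LPOne{2}_{\sigma}(\Omega)$ is needed.
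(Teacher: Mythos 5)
Your argument is correct and is exactly the paper's own proof: exhibit $\nabla\psi = \zero + \nabla\psi$ as a valid Helmholtz--Hodge decomposition and invoke the uniqueness statement of the preceding theorem. The only difference is that you spell out the (trivial) verification that $\zero \in \LPOne{2}_{\sigma}\OMEGA$, which the paper leaves implicit.
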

%-----------------

%-----------------
\begin{proof}
Note that $\nabla \psi = \zero + \nabla \psi$ is the unique Helmholtz--Hodge decomposition of $\nabla \psi$. 
Thus, it follows $\helm(\nabla \psi) = \zero$.
\end{proof}
%-----------------

Finally, it is emphasised that the velocity solution $\uu$ of \eqref{eq:weak:NSE} is completely determined by testing the momentum equation with divergence-free velocity test functions $\vv \in \VV^\dvg$ and by its initial value $\uu_0$.
Assuming smoothness of $\uu$ in space and time, $\uu$ fulfils for all $\vv \in \VV^\dvg$
\begin{subequations}
\begin{align} \label{eq:helm:equiclasses}
  	\rb{\partial_t\uu,\vv}
		&+\nu \rb{\nabla\uu,\nabla\vv} 
		+ \rb{\rb{\uu\ip\nabla}\uu,\vv}
		 = \rb{\ff,\vv} \qquad \Leftrightarrow \\
  &\rb{\helm(\partial_t\uu),\vv}
		-\nu \rb{\helm(\Delta \uu), \vv} 
		+ \rb{\helm(\rb{\uu\ip\nabla}\uu),\vv}
		 = \rb{\helm(\ff),\vv}.
\end{align}	
\end{subequations}

%-----------------
\begin{remark} \label{rem:weak:eq:forces}
Equation \eqref{eq:helm:equiclasses} shows that the velocity solution $\uu$ of the incompressible Navier--Stokes equations is not determined by the forces $\ff$ in the momentum equation themselves, but by their Helmholtz--Hodge projectors $\helm(\ff)$. 
Therefore, two forces $\ff$ and $\gbld$ that differ by a gradient field $\ff=\gbld + \nabla \phi$, lead to the same velocity solution. 
Thus, the velocity solution of the incompressible  Navier--Stokes equations is naturally determined by equivalence classes of forces, where it holds
\begin{equation*}
\ff \simeq \gbld \qquad \Leftrightarrow \qquad
   \helm(\ff) = \helm(\gbld).	
\end{equation*}
\end{remark}
%-----------------

%------------------------------------------------------------------------------------------------
\subsection{$\emph{\textbf{H}}^{\mathbf{1}}$ finite element methods}
\label{sec:H1FEM}
%------------------------------------------------------------------------------------------------

Let $\VV_h/\Q_h$ be the considered discretely inf-sup stable velocity/pressure FE pair, where $\VV_h\subset\VV$ and $\Q_h\subset\Q$.
We assume that the discrete velocity space contains polynomials up to degree $\ku$ and the discrete pressure space contains polynomials up to degree $k_p$. 
Note that for most discretely inf-sup stable schemes it holds $k_p = \ku-1$. 
An example is the Taylor--Hood finite element family $\PPk{k}{}/\Pk{k-1}{}$ with $k \geqslant 2$ \cite{John16}. 
However, for the famous mini element it holds $k_p=\ku (= 1)$ \cite{abf:1984}.
In the following numerical analysis, $C>0$ always denotes a generic constant, whose value is independent of the mesh size but possibly dependent on the mesh-regularity.
~\\

In order to approximate \eqref{eq:TINS}, or equivalently \eqref{eq:WeakContinuous}, the following generic semi-discrete FEM is considered:
\begin{subequations} \label{eq:H1-FEM}
	\begin{empheq}[left=\empheqlbrace]{align} 
	\text{Find }\rb{\uu_h,p_h}\colon\rsb{0,\tend}\to\VV_h\times\Q_h
		\text{ with }\uu_h\rb{0}=\uu_{0h}
		\text{ s.t., }\forall\,\rb{\vv_h,q_h}\in\VV_h\times\Q_h&,\\
	\rb{\partial_t\uu_h,\vv_h}
		+\nu \rb{\nabla\uu_h,\nabla\vv_h} 
		+ \rb{\rb{\uu_h\ip\nabla}\uu_h + \frac{1}{2}\rb{\DIV\uu_h}\uu_h,\vv_h}
		- \rb{p_h,\DIV\vv_h}
		&= \rb{\ff,\vv_h} \\
	\rb{q_h,\DIV\uu_h} &= 0.
	\end{empheq} 
\end{subequations}
Now, the choice of the FE spaces decides whether an exactly divergence-free, pressure-robust method is applied or not. 
For example, the Scott--Vogelius element $\PPk{k}{}/\Pdk{k-1}{}$ is discretely inf-sup stable on shape-regular, barycentrically refined meshes for $k \geqslant d$ \cite{zhang:2005}, or on meshes without singular vertices for $k\geqslant 2d$; then it yields an exactly divergence-free and thus pressure-robust method.
On the other hand, for example, a classical $\PPk{k}{}/\Pk{k-1}{}$ Taylor--Hood element is a non-pressure-robust method.
Note that the explicit skew-symmetrisation of the convective term is only necessary for a non-divergence-free FEM.
~\\

The subspace of discretely divergence-free functions is given by
\begin{align*}
	\VV_h^\dvg 
		= \set{\vv_h\in\VV_h\colon (q_h,\DIV\vv_h)=0,~\forall\,q_h\in\Q_h},
\end{align*}

where we note that for exactly divergence-free (and thus pressure-robust) FEM, $\vv_h\in\VV_h^\dvg$ follows $\DIV\vv_h=0$, i.e., $\VV_h^\dvg \subset \LPOne{2}_{\sigma}\OMEGA$.
In this context, a frequently used tool in finite element error analysis is the discrete Stokes projector, defined by
\begin{align*}
	&\stokes_h\colon\VV^\dvg \to \VV_h^\dvg, \quad
		\stokes_h(\vv) = \argmin_{\vv_h\in\VV_h^\dvg} \norm{\nabla\rb{\vv-\vv_h}}_\LP{2}{\OMEGA}, \quad
		\int_\Omega \nabla\sqb{\stokes(\vv)-\vv}\Fip\nabla\vv_h \dx =0,~\forall\,\vv_h\in\VV_h^\dvg.
\end{align*}

The Stokes projector possesses optimal approximation properties due to discrete inf-sup stability \cite{alm:2018}.
Last but not least, the Lagrange interpolation into the $H^1$-conforming subspace of the discrete pressure-space $Q_h$ is denoted by
\begin{equation} \label{eq:lagrange:press:interpol}
  L_h: C(\bar{\Omega}) \to Q_h \cap H^1(\Omega).
\end{equation}

%-----------------
\begin{remark} \label{rem:h1:press:approx}
For discrete discontinuous pressure spaces with $k_p \geqslant 1$ it holds for all $q \in H^{k_p+1}$
\begin{equation*}
  \norm{\nabla (q - L_h q)}_{L^2} \leqslant  C h^{k_p}
   \abs{q}_{H^{k_p+1}\OMEGA},	
\end{equation*}

where $C$ does only depend on the shape-regularity of the triangulation.
\end{remark}
%-----------------

%----------------- 
\begin{lemma}[Convergence of the nonlinear convection term as $h \to 0$] \label{lem:conv:nonlinear:term}%
Assume that $\uu_h \to \uu \in \WW^{1, \infty}(\Omega)$ converges strongly in $\HH^{1}(\Omega)$ and that $\norm{\uu_h}_{\WW^{1, \infty}(\Omega)} \leqslant  C$ is uniformly bounded.
Then, $(\uu_h \ip \nabla) \uu_h \to (\uu \ip \nabla) \uu$ converges strongly in $\LPOne{2}(\Omega)$.
Further, it holds that $\helm((\uu_h \ip \nabla) \uu_h) \to \helm((\uu \ip \nabla) \uu)$ converges strongly for the Helmholtz--Hodge projector in $\LPOne{2}(\Omega)$.
\end{lemma}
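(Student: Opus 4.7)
The plan is to use the standard add-and-subtract decomposition for the product of two converging quantities, then invoke boundedness of the Helmholtz--Hodge projector on $\LPOne{2}$ for the second claim.

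First, I would write
\begin{equation*}
  (\uu_h \ip \nabla) \uu_h - (\uu \ip \nabla) \uu
  = \rb{(\uu_h - \uu) \ip \nabla} \uu_h + \rb{\uu \ip \nabla} (\uu_h - \uu),
\end{equation*}
and estimate each term in $\LPOne{2}(\Omega)$ separately. For the first term, I would apply a H\"older estimate combined with the uniform bound $\norm{\uu_h}_{\WMP{1}{\infty}{\OMEGA}} \leqslant C$ to obtain
\begin{equation*}
  \norm{\rb{(\uu_h - \uu) \ip \nabla} \uu_h}_{\LPOne{2}\OMEGA}
  \leqslant \norm{\uu_h - \uu}_{\LPOne{2}\OMEGA}
  \norm{\nabla \uu_h}_{\LPOne{\infty}\OMEGA}
  \leqslant C \norm{\uu_h - \uu}_{\LPOne{2}\OMEGA},
\end{equation*}
which vanishes as $h \to 0$ because strong $\HH^1$ convergence implies strong $\LPOne{2}$ convergence. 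For the second term, I would use instead the bound $\uu \in \WMP{1}{\infty}{\OMEGA}$ to get
\begin{equation*}
  \norm{\rb{\uu \ip \nabla} (\uu_h - \uu)}_{\LPOne{2}\OMEGA}
  \leqslant \norm{\uu}_{\LPOne{\infty}\OMEGA}
  \norm{\nabla(\uu_h - \uu)}_{\LPOne{2}\OMEGA} \to 0,
\end{equation*}
using the $\HH^1$ convergence. Summing the two contributions and applying the triangle inequality yields the first claim.

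For the second claim, I would simply observe that the Helmholtz--Hodge projector, as defined via the unique decomposition \eqref{helmholtz:hodge:decomposition}, is the $\LPOne{2}$-orthogonal projection onto the closed subspace $\LPOne{2}_{\sigma}\OMEGA$, and hence is a bounded linear operator on $\LPOne{2}\OMEGA$ with operator norm $1$. Therefore the already established convergence $(\uu_h \ip \nabla) \uu_h \to (\uu \ip \nabla) \uu$ in $\LPOne{2}\OMEGA$ directly transfers under $\helm$ to give strong convergence $\helm((\uu_h \ip \nabla) \uu_h) \to \helm((\uu \ip \nabla) \uu)$ in $\LPOne{2}\OMEGA$.

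The proof is essentially a routine application of the triangle inequality plus H\"older estimates, and no step appears genuinely hard. The only point requiring attention is matching the two factors in each split term with the right norm: the $\LPOne{\infty}$ norm must always land on the factor for which the uniform bound is available (either $\uu$ or the whole sequence $\uu_h$), while the converging factor is measured in the weaker $\LPOne{2}$ (or $\HH^1$) norm. The uniformity of the $\WMP{1}{\infty}{}$ bound on $\uu_h$ is precisely what allows this allocation and is the hypothesis one must not drop.
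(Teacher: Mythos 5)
Your proof is correct and follows essentially the same route as the paper: the identical add-and-subtract splitting with H\"older estimates placing the $\LPOne{\infty}$ norm on the uniformly bounded factor, and then linearity plus boundedness of the Helmholtz--Hodge projector for the second claim (the paper phrases the latter via well-posedness of the Neumann problem \eqref{eq:weak:helm:problem}, whereas you invoke the orthogonal-projection structure directly, but these amount to the same observation).
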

%----------------- 

%----------------- 
\begin{proof}
One can derive
\begin{align*}
  \norm{ (\uu_h \ip \nabla) \uu_h - (\uu \ip \nabla) \uu }_{\LPOne{2}(\Omega)}
    & \leqslant  
 \norm{ ((\uu_h - \uu) \ip \nabla) \uu_h}_{\LPOne{2}(\Omega)}
 + \norm{ (\uu \ip \nabla) (\uu_h - \uu)}_{\LPOne{2}(\Omega)} \\
 & \leqslant 
 \norm{ \uu_h - \uu }_{\LPOne{2}(\Omega)} \norm{ \nabla \uu_h }_{\LPOne{\infty}(\Omega)}
 + \norm{ \uu }_{\LPOne{\infty}(\Omega)} \norm{ \nabla (\uu_h - \uu)}_{\LPOne{2}(\Omega)}.
\end{align*}

Due to $\uu_h \to \uu$ strongly in $\HH^1(\Omega)$,  $\norm{ \uu_h - \uu }_{\LPOne{2}(\Omega)}$ and $\norm{ \nabla (\uu_h - \uu)}_{\LPOne{2}(\Omega)}$ converge to zero.
Further, $\norm{\nabla \uu_h}_{\LPOne{\infty}(\Omega)}$ and $\norm{\uu}_{\LPOne{\infty}(\Omega)}$ are assumed to be bounded.
The convergence  of the Helmholtz--Hodge projectors $\helm((\uu_h \ip \nabla) \uu_h) \to \helm((\uu \ip \nabla) \uu)$ in $\LPOne{2}(\Omega)$ is an immediate consequence of the well-posedness of the Neumann problem \eqref{eq:weak:helm:problem} in $H^{1}(\Omega) / \mathbb{R}$.
\end{proof}
%----------------- 

%----------------- 
\begin{remark}[Pseudo-dominant convection] \label{rem:pseudo:dominant:convection}
When confronted with generalised Beltrami flows at high Reynolds numbers, Lemma \ref{lem:conv:nonlinear:term} is decisive in order to understand the behaviour of space discretisations of the incompressible Navier--Stokes equations.
While $\helm( (\uu \ip \nabla) \uu ))$ vanishes for generalised Beltrami flows since $(\uu \ip \nabla) \uu$ is a gradient field, on the discrete level $\helm( (\uu_h \ip \nabla) \uu_h )) \to \zero$ holds since $(\uu_h \ip \nabla) \uu_h$ \emph{only converges} to a gradient field as $h\to 0$.
Thus, one can observe some kind of \emph{pseudo-dominant} convection at high Reynolds numbers \cite{cmame:linke:merdon:2016}, i.e., the infinite-dimensional generalised Beltrami problem is not convection-dominated due to $\helm( (\uu \ip \nabla) \uu )) = \zero$, but the discretised problem experiences some non-negligible, artificial convective force.
A similar effect can be observed also for the linear Stokes problem when one uses numerical quadratures in the discretisation of forces of gradient fields, see \cite[Subsection 6.2]{lmn:2018}.
\end{remark}
%----------------- 

%------------------------------------------------------------------------------------------------
\subsection{Discrete $\emph{\textbf{H}}^{\mathbf{1}}$-conforming Helmholtz--Hodge projector}
\label{sec:DiscreteH1Projectors}
%------------------------------------------------------------------------------------------------

A discrete Helmholtz--Hodge projector in $\LTWO$ is defined straightforward as the $\LTWO$ projector onto $\VV_h^\dvg$:
\begin{equation} \label{eq:disc:helm}
  \helm_h\colon\LP{2}{\OMEGA} \to \VV_h^\dvg, \quad
		\helm_h(\vv) = \argmin_{\vv_h\in\VV_h^\dvg} \norm{\vv-\vv_h}_\LP{2}{\OMEGA}
		, \quad
		\int_\Omega \sqb{\helm_h(\vv)-\vv}\ip\vv_h \dx =0,~\forall\,\vv_h\in\VV_h^\dvg.
\end{equation}

%-----------------
\begin{remark}
Under the assumptions of elliptic regularity of $\Omega$, shape-regular meshes and discrete inf-sup stability of the method, the corresponding discrete Helmholtz projector has optimal approximation properties	
\begin{equation}
	\norm{\uu-\helm_h(\uu)}_\LP{2}{\OMEGA}
		+ h\norm{\nabla\sqb{\uu-\helm_h(\uu)}}_\LP{2}{\OMEGA}
		\leqslant  C h^{k+1} \norm{\uu}_\HM{k+1}{\OMEGA}.
\end{equation}

The proof follows directly from \cite[Lemma 11]{alm:2018}.
\end{remark}
%-----------------

%-----------------
\begin{lemma}[$\WMP{1}{\infty}{}$ stability of discrete Helmholtz--Hodge projector] \label{lem:W1inftyHelmholtz}%
	Assume elliptic regularity of $\Omega$, shape-regular meshes, discrete inf-sup stability of the method and that the exact solution $\uu$ is sufficiently smooth. 
	Then, the corresponding discrete Helmholtz--Hodge projector fulfils 
	\begin{equation}
		\norm{\nabla\helm_h(\uu)}_\LP{\infty}{\OMEGA}
			\leqslant  C\norm{\nabla\uu}_\LP{\infty}{\OMEGA} 
				+ h^{\ku-\nf{d}{2}} \norm{\uu}_\HM{\ku+1}{\OMEGA},
	\end{equation}
	
	where $C>0$ is independent of $h$ and $\ku$ denotes the polynomial order of discrete velocities in $\VV_h$.
\end{lemma}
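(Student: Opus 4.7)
The plan is to bound $\nabla \helm_h(\uu)$ in $\LP{\infty}{\OMEGA}$ by comparing it against a suitable $\WMP{1}{\infty}{\OMEGA}$-stable, $\HM{1}{}$-conforming interpolant, applying an inverse inequality to the resulting discrete difference, and then invoking the $\HM{1}{}$-approximation property of $\helm_h$ from the preceding remark (itself a consequence of \cite[Lemma 11]{alm:2018}).

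First, introduce a Scott--Zhang-type quasi-interpolation $I_h\colon \HMZ{1}{\OMEGA}\to\VV_h$. Because $\VV_h$ locally contains full polynomials of degree $\ku$, such an operator exists on shape-regular meshes and satisfies both
\begin{equation*}
  \norm{\nabla(\uu - I_h \uu)}_{\LP{2}{\OMEGA}} \leqslant C h^{\ku} \norm{\uu}_{\HM{\ku+1}{\OMEGA}}
  \quad \text{and} \quad
  \norm{\nabla I_h \uu}_{\LP{\infty}{\OMEGA}} \leqslant C \norm{\nabla \uu}_{\LP{\infty}{\OMEGA}}.
\end{equation*}
By the triangle inequality,
\begin{equation*}
  \norm{\nabla \helm_h(\uu)}_{\LP{\infty}{\OMEGA}}
    \leqslant \norm{\nabla(\helm_h(\uu) - I_h \uu)}_{\LP{\infty}{\OMEGA}}
    + \norm{\nabla I_h \uu}_{\LP{\infty}{\OMEGA}},
\end{equation*}
so the second summand is already controlled by $C\norm{\nabla\uu}_{\LP{\infty}{\OMEGA}}$ and only the first one remains.

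Since $\helm_h(\uu) - I_h \uu \in \VV_h$ is a piecewise polynomial of degree at most $\ku$, the standard inverse estimate on shape-regular (quasi-uniform) meshes yields
\begin{equation*}
  \norm{\nabla(\helm_h(\uu) - I_h \uu)}_{\LP{\infty}{\OMEGA}}
    \leqslant C h^{-d/2} \norm{\nabla(\helm_h(\uu) - I_h \uu)}_{\LP{2}{\OMEGA}}.
\end{equation*}
Applying the triangle inequality once more together with the $\HM{1}{}$-approximation bound $\norm{\nabla(\uu - \helm_h(\uu))}_{\LP{2}{\OMEGA}} \leqslant C h^{\ku}\norm{\uu}_{\HM{\ku+1}{\OMEGA}}$ (from the remark preceding the lemma) and the analogous bound for $I_h$ produces
\begin{equation*}
  \norm{\nabla(\helm_h(\uu) - I_h \uu)}_{\LP{2}{\OMEGA}}
    \leqslant C h^{\ku} \norm{\uu}_{\HM{\ku+1}{\OMEGA}},
\end{equation*}
so the first summand is bounded by $C h^{\ku-d/2}\norm{\uu}_{\HM{\ku+1}{\OMEGA}}$ and the claimed estimate follows.

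The main obstacle is really just securing the $\WMP{1}{\infty}{\OMEGA}$-stable, $\HM{1}{}$-conforming interpolant into the specific inf-sup stable velocity space at hand; once $I_h$ is fixed, the argument is a textbook inverse-estimate-plus-approximation calculation. The characteristic loss factor $h^{-d/2}$ is precisely the price paid for pulling an $\LP{2}{}$-type approximation estimate up to $\LP{\infty}{}$, and it is benign whenever $\ku \geqslant \lceil d/2 \rceil$, which covers all high-order settings of interest in the sequel.
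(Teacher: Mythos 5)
Your proof is correct and follows the same overall architecture as the paper's: split $\nabla\helm_h(\uu)$ via the triangle inequality against a $\WMP{1}{\infty}{}$-stable comparison function lying in the discrete space, apply the local inverse inequality to the purely discrete difference, and then pass through $\uu$ using the $\mathcal{O}(h^{\ku})$ bound on $\norm{\nabla\sqb{\uu-\helm_h(\uu)}}_{\LP{2}{\OMEGA}}$ from the preceding remark. The one genuine difference is the choice of comparison function. The paper uses the discrete Stokes projector $\stokes_h(\uu)\in\VV_h^\dvg$ and invokes its $\WMP{1}{\infty}{}$-stability, which is a nontrivial result (cited from Girault et al.) requiring its own hypotheses; you instead use a Scott--Zhang quasi-interpolant $I_h\uu\in\VV_h$, whose $\WMP{1}{\infty}{}$-stability and optimal $\HM{1}{}$-approximation are elementary and standard on shape-regular meshes. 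Since the inverse estimate holds for all of $\VV_h$ and not just $\VV_h^\dvg$, nothing in the argument requires the comparison function to be discretely divergence-free, so your route is a mild but genuine simplification: it trades a deep stability theorem for a textbook interpolation result, at the cost of introducing one extra operator. Both arguments implicitly need quasi-uniformity to convert the elementwise inverse estimate $h_K^{-d/2}$ into a global $h^{-d/2}$; you flag this explicitly, which the paper does not. The resulting bound is identical in both cases.
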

%-----------------

%-----------------
\begin{proof}
The first step is to use the Stokes projector and the triangle inequality to obtain
	\begin{equation*}
		\norm{\nabla \helm_h(\uu)}_\LP{\infty}{\OMEGA}
			\leqslant  \norm{\nabla \stokes_h(\uu)}_\LP{\infty}{\OMEGA} 
				+ \norm{\nabla \helm_h(\uu) - \nabla \stokes_h(\uu)}_\LP{\infty}{\OMEGA}.
	\end{equation*}
	
Note that the $\WMP{1}{\infty}{}$ stability of the Stokes projector has been shown in \cite{GiraultEtAl15}.
For shape-regular decompositions $\T$, the discrete space $\VV_h$ (and thus also $\VV_h^\dvg$) satisfies the local inverse inequality \cite[Lemma 1.138]{ErnGuermond04} 
\begin{equation} \label{eq:LocInvEq}
	\forall\vv_h\in\VV_h\colon\quad
	\norm{\vv_h}_{\WMP{\ell}{p}{\rb{K}}}
	\leqslant  C_\inv h_K^{m-\ell+d\rb{\frac{1}{p}-\frac{1}{q}}}\norm{\vv_h}_{\WMP{m}{q}{\rb{K}}}, \quad
	\forall K\in\T,
\end{equation}

where $0\leqslant  m\leqslant  \ell$ and $1\leqslant  p,q\leqslant \infty$.
Choosing $\ell=m=1$, $p=\infty$ and $q=2$, the inverse estimate can be applied to further estimate the right-hand side as
\begin{align*}
	\norm{\nabla \helm_h(\uu) - \nabla \stokes_h(\uu)}_\LP{\infty}{\OMEGA}
		&\leqslant  C_\inv h^{-\nf{d}{2}} \norm{\nabla \sqb{\helm_h(\uu) - \stokes_h(\uu)}}_\LP{2}{\OMEGA} \\
		&\leqslant  C_\inv h^{-\nf{d}{2}} \sqb{
		\norm{\nabla \sqb{\uu - \helm_h(\uu)}}_\LP{2}{\OMEGA}
			+ \norm{\nabla \sqb{\uu - \stokes_h(\uu) }}_\LP{2}{\OMEGA}
		} \\
		&\leqslant  C h^{\ku-\nf{d}{2}} \norm{\uu}_\HM{\ku+1}{\OMEGA},
\end{align*}

where the optimal approximation properties of both $\helm_h$ and $\stokes_h$ are essential.
\end{proof}
%-----------------

Combining Lemmas \ref{lem:W1inftyHelmholtz} and \ref{lem:conv:nonlinear:term} yields a result which is essential for good convergence properties of the Galerkin method on pre-asymptotic meshes, see also Theorems \ref{thm:pr} and \ref{thm:classical}:
%-----------------
\begin{remark} \label{rem:p:uhnablauh:conv}
Assuming $\ku \geqslant d/2$, it holds for $h \to 0$ that
\begin{equation*}
  \helm( (\helm_h(\uu) \ip \nabla) \helm_h(\uu))
   \stackrel{\LPOne{2}{}}{\to}
    \helm((\uu \ip \nabla) \uu).	
\end{equation*}
\end{remark}
%-----------------

Now, let us first consider the situation where $\helm_h$ belongs to a pressure-robust (divergence-free) method.
%-----------------
\begin{lemma} \label{lem:DiscHelmDivFreeH1}%
For pressure-robust (divergence-free) $\HM{1}{}$ methods, for all $\psi \in \Hm{1}{\OMEGA}$ it holds
\begin{equation*}
  \helm_h(\nabla \psi) = \zero.	
\end{equation*}
\end{lemma}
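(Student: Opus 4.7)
The plan is to verify the defining variational characterisation of $\helm_h$ applied to $\nabla\psi$ and show that the trivial function $\zero$ is the unique solution. By \eqref{eq:disc:helm}, the discrete Helmholtz--Hodge projection of $\vv \in \LP{2}{\OMEGA}$ is the unique element of $\VV_h^\dvg$ satisfying
\begin{equation*}
   \int_\Omega \sqb{\helm_h(\vv)-\vv}\ip \vv_h \dx = 0, \quad \forall\,\vv_h\in\VV_h^\dvg.
\end{equation*}
Hence, it suffices to prove that $\zero$ satisfies this identity with $\vv = \nabla\psi$, i.e., to show that
\begin{equation*}
   \rb{\nabla\psi, \vv_h} = 0, \quad \forall\,\vv_h \in \VV_h^\dvg.
\end{equation*}

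The key step is an integration by parts. Since $\psi\in\Hm{1}{\OMEGA}$ and $\vv_h \in \VV_h \subset \HMZ{1}{\OMEGA}$, one obtains
\begin{equation*}
   \rb{\nabla\psi, \vv_h} = -\rb{\psi, \DIV\vv_h} + \int_{\partial\Omega} \psi\,\rb{\vv_h\ip\nn}\ds.
\end{equation*}
Here the boundary integral vanishes because $\restr{\vv_h}{\partial\Omega} = \zero$. It remains to argue that the volume term vanishes; this is precisely where the pressure-robustness (divergence-freedom) of the scheme enters: for a divergence-free $\HM{1}{}$ method, the discretely divergence-free subspace $\VV_h^\dvg$ consists of pointwise divergence-free functions, so $\DIV\vv_h \equiv 0$ in $\Omega$. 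Combined with the vanishing trace of $\vv_h$, this also means $\VV_h^\dvg \subset \LPOne{2}_\sigma\OMEGA$, consistent with Remark \ref{rem:helm:bcs}.

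Collecting these observations, $\rb{\nabla\psi,\vv_h} = 0$ for all $\vv_h \in \VV_h^\dvg$, so $\zero$ satisfies the orthogonality condition defining $\helm_h(\nabla\psi)$, and by uniqueness of the $\LP{2}{}$-projection we conclude $\helm_h(\nabla\psi) = \zero$. There is essentially no obstacle: the result is almost tautological once one recognises that the pressure-robust construction is designed precisely so that the continuous $\LPOne{2}$-orthogonality between divergence-free fields and gradients is preserved discretely. It is exactly the \emph{failure} of pointwise divergence-freedom (i.e.\ $\VV_h^\dvg \not\subset \LPOne{2}_\sigma\OMEGA$) in classical inf-sup stable pairs such as Taylor--Hood which makes the analogous identity fail and prevents those schemes from being pressure-robust.
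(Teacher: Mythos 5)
Your proof is correct and follows essentially the same route as the paper: integrate by parts, use the vanishing trace of $\vv_h\in\VV_h\subset\HMZ{1}{\OMEGA}$ to kill the boundary term, and use the pointwise divergence-freedom of $\VV_h^\dvg$ for a pressure-robust method to kill the volume term, so that $\zero$ satisfies the defining orthogonality of $\helm_h(\nabla\psi)$. The paper's version is just a one-line compression of this same argument.
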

%-----------------

%-----------------
\begin{proof}
Since for divergence-free methods $\DIV\vv_h=0$ holds for all $\vv_h \in \VV_h^\dvg$, one obtains
\begin{equation*}
	(\nabla \psi, \vv_h) = -(\psi, \DIV \vv_h) = 0,	
\end{equation*}
for all $\vv_h \in \VV_h^\dvg$. 
\end{proof}
%-----------------

Finally, for non-pressure-robust methods, the situation is not exactly the same as in the infinite-dimensional case.
In fact, for the steady Navier--Stokes problem, i.e., for the elliptic problem, one has to estimate the consistency error of $\helm_h(\nabla \phi)$ in a discrete $\HMh{-1}{}$ norm, which yields an $\mathcal{O}(h^{k_p+1})$ consistency error \cite{lm:2018}, where $k_p$ denotes the formal approximation order of the discrete pressure space in the $L^2$ norm.
However, for the fully time-dependent \emph{a priori} error analysis below we will have to estimate this consistency error in the stronger $\LP{2}{\OMEGA}$ norm, which was seemingly done for the first time in \cite{lr:2018}.

%-----------------
\begin{lemma} \label{lem:DiscHelmLerayNonDivFreeH1}%
For non-pressure-robust $\HM{1}{}$ methods with $k_p \geqslant 1$, it holds for all gradient fields $\nabla \psi$ with $\psi \in \Hm{k_p+1}{\OMEGA}$
\begin{equation}
	\norm{\helm_h(\nabla \psi)}_\LP{2}{\OMEGA} 
		\leqslant  C h^{k_p} \abs{\psi}_\Hm{k_p + 1}{\OMEGA}.
\end{equation}
\end{lemma}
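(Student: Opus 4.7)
The plan is to exploit the $L^2$-orthogonality built into the definition of $\helm_h$ in \eqref{eq:disc:helm}, combined with the weak discrete divergence-freeness of $\VV_h^\dvg$ against $Q_h$. Since $\helm_h(\nabla\psi)\in\VV_h^\dvg$, I would test the defining equation of $\helm_h$ with $\vv_h=\helm_h(\nabla\psi)$ itself to obtain
\begin{equation*}
\norm{\helm_h(\nabla\psi)}_{\LP{2}{\OMEGA}}^2 = \rb{\nabla\psi,\helm_h(\nabla\psi)}.
\end{equation*}
The task is then to bound the right-hand side by $C h^{k_p}|\psi|_{\Hm{k_p+1}{\OMEGA}}\,\norm{\helm_h(\nabla\psi)}_{\LP{2}{\OMEGA}}$.

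The main trick is to insert the $H^1$-conforming Lagrange interpolant $L_h\psi\in Q_h\cap H^1(\Omega)$ from \eqref{eq:lagrange:press:interpol} and split
\begin{equation*}
\rb{\nabla\psi,\helm_h(\nabla\psi)}
= \rb{\nabla(\psi - L_h\psi),\helm_h(\nabla\psi)} + \rb{\nabla L_h\psi,\helm_h(\nabla\psi)}.
\end{equation*}
For the second term, since $L_h\psi\in H^1(\Omega)$ and $\helm_h(\nabla\psi)\in\VV_h^\dvg\subset \HMZ{1}{\OMEGA}$, integration by parts is admissible and the boundary term vanishes, leaving $-\rb{L_h\psi,\DIV\helm_h(\nabla\psi)}$. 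Because $L_h\psi\in Q_h$ and $\helm_h(\nabla\psi)$ is discretely divergence-free, this inner product is zero. Hence the second term disappears entirely, and only the approximation term
\begin{equation*}
\rb{\nabla(\psi - L_h\psi),\helm_h(\nabla\psi)}
\end{equation*}
remains, which is handled by Cauchy--Schwarz together with the standard gradient approximation bound stated in Remark \ref{rem:h1:press:approx},
\begin{equation*}
\norm{\nabla(\psi - L_h\psi)}_{\LP{2}{\OMEGA}} \leqslant C h^{k_p}\,\abs{\psi}_{\Hm{k_p+1}{\OMEGA}}.
\end{equation*}
Dividing by $\norm{\helm_h(\nabla\psi)}_{\LP{2}{\OMEGA}}$ gives the claim.

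I do not expect any serious obstacle; the argument is essentially a duality/consistency-type estimate. The only subtlety worth double-checking is that the chosen pressure space $Q_h$ contains a continuous Lagrange subspace of polynomial degree at least $k_p$ (otherwise Remark \ref{rem:h1:press:approx} could not be applied), and that the velocity test functions in $\VV_h^\dvg$ have vanishing trace so that the integration-by-parts boundary term really drops. Both are built into the $\HM{1}{}$-conforming discretely inf-sup stable setting of Section \ref{sec:H1FEM}, so the argument closes cleanly. The factor $h^{k_p}$ reflects exactly the observation anticipated before Lemma \ref{lem:DiscHelmLerayNonDivFreeH1}, namely that the $L^2$-type consistency error for gradient fields is one order worse than the corresponding $\HMh{-1}{}$-type estimate.
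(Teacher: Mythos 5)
Your proof is correct and follows essentially the same route as the paper: both insert the Lagrange interpolant $L_h\psi\in Q_h\cap H^1(\Omega)$, use integration by parts plus discrete divergence-freeness to kill the term $\rb{\nabla L_h\psi,\vv_h}$, and conclude via Cauchy--Schwarz, the approximation bound of Remark \ref{rem:h1:press:approx}, and the $\LP{2}{}$-projection property of $\helm_h$. The only cosmetic difference is that you test directly with $\vv_h=\helm_h(\nabla\psi)$, whereas the paper states the bound for arbitrary $\vv_h\in\VV_h^\dvg$ and leaves that last step implicit.
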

%-----------------

%-----------------
\begin{proof}
Given $\psi \in \Hm{k_p+1}{\OMEGA}$, it holds for all discretely divergence-free $\vv_h\in\VV_h^\dvg$
\begin{equation*}
	0 = -(L_h \psi, \DIV \vv_h) = \rb{\nabla (L_h \psi), \vv_h},	
\end{equation*}
  
due to $L_h \psi \in Q_h \cap H^1(\Omega)$.
Thus, one obtains
\begin{equation*}
	\rb{\nabla \psi,\vv_h}
		= \rb{\nabla (\psi - L_h \psi) ,\vv_h}
		\leqslant  \norm{\nabla (\psi - L_h \psi)}_{\LP{2}{\OMEGA}} \norm{\vv_h}_{\LPOne{2}(\Omega)}.
\end{equation*}

The result is proven using Remark \ref{rem:h1:press:approx}.
\end{proof}
%-----------------

%-----------------
\begin{remark}
The numerical experiments in \cite{lr:2018} indicate that Lemma \ref{lem:DiscHelmLerayNonDivFreeH1} is sharp.
Indeed, for non-pressure-robust, inf-sup stable mixed methods with discontinuous $\Pk{0}{}$ pressures, i.e., $k_p=0$, like the nonconforming Crouzeix--Raviart and the conforming Bernardi--Raugel element, it is demonstrated that it holds
\begin{equation*}
  \norm{\helm_h(\nabla q)}_\LP{2}{\OMEGA}=\mathcal{O}(1),	
\end{equation*}

leading to a pressure-induced locking phenomenon for the time-dependent Stokes equations in the presence
of large pressure gradients.
\end{remark}
%-----------------

%-----------------
\begin{remark}
Remark \ref{rem:p:uhnablauh:conv} assures that at least for $\ku \geqslant d/2$ one obtains for $h \to 0$ the convergence result
\begin{equation*}
  \helm( (\helm_h(\uu) \ip \nabla) \helm_h(\uu))
    \stackrel{\LPOne{2}{}}{\to}
    \helm((\uu \ip \nabla) \uu).
\end{equation*}

However, the quality of the space discretisation \eqref{eq:H1-FEM} is determined by whether and how
\begin{equation*}
  \helm_h( (\helm_h(\uu) \ip \nabla) \helm_h(\uu))
    \stackrel{\LPOne{2}{}}{\to}
    \helm((\uu \ip \nabla) \uu)
\end{equation*}

converges  for $h\to 0$. 
For $k_p \geqslant 2$, convergence in $\LPOne{2}{}$ is assured.
But the convergence speed of pressure-robust methods can be much faster than in classical, non-pressure-robust space discretisations due to Lemma \ref{lem:DiscHelmLerayNonDivFreeH1}, when $(\uu \ip \nabla) \uu$ contains a large gradient part in the sense of the Helmholtz--Hodge decomposition \eqref{helmholtz:hodge:decomposition}.
This is the main reason for the superiority of pressure-robust methods for Beltrami flows, where space discretisations suffer from an artificial, pseudo-dominant convection on coarse meshes, see Remark \ref{rem:pseudo:dominant:convection}.
Exactly this artificial pseudo-dominant convection is reduced by pressure-robust methods.
\end{remark}
%-----------------

%------------------------------------------------------------------------------------------------
%------------------------------------------------------------------------------------------------
\section{A special $\emph{\textbf{H}}^{\mathbf{1}}$ finite element error analysis}
\label{sec:ErrorAnalysisH1}
%------------------------------------------------------------------------------------------------
%------------------------------------------------------------------------------------------------

In the following, we present a numerical error analysis for the time-dependent Navier--Stokes problem which is based on a new understanding of the velocity error in the time-dependent Stokes problem, see \cite{lr:2018} and \cite[Section 4]{alm:2018}.
The key point is that in the case of low viscosities, pressure-robust space discretisations do not show an increase in the velocity error as long as the time interval is small compared with $\nu^{-1}$.
On the other hand, in non-pressure-robust methods the only source of error is a dominating pressure gradient in the momentum balance \cite[Theorem 5.2]{AhmedLinkeMerdon18}; namely in the special case $\ff = \mathrm{const}$ one gets (for $\nu \ll 1$) $\uu_h \approx \helm_h(\uu) + t\helm_h(\nabla p)$ for $t\in\sqb{0,\tend}$.
~\\

In the following, we will give two different error estimates for the Navier--Stokes equations in the pressure-robust and the non-pressure-robust case.
The convergence analysis is inspired by novel discrete velocity error estimates for the transient Stokes equations \cite{lr:2018, lm:2018, alm:2018}, which estimate the difference between the discrete velocity $\uu_h(t)$ and the (discretely divergence-free) $\LTWO$ best approximation $\helm_h(\uu(t))$.

%------------------------------------------------------------------------------------------------
\subsection{Pressure-robust space discretisation}
%------------------------------------------------------------------------------------------------

%-----------------
\begin{theorem}[Pressure-robust estimate] \label{thm:pr}%
For the discrete velocity for all $t \in \sqb{0, \tend}$ the following representation is chosen
\begin{equation*}
	\uu_h(t) = \helm_h(\uu(t)) + \ee_h(t),	
\end{equation*}

and the time-dependent evolution of $\ee_h(t)$ is considered.
Then, assuming $\uu \in \Lp{2}{\rb{\WMP{1}{\infty}{} \cap \HM{3}{}}}$, $\partial_t\uu \in \Lp{1}{\rb{\LP{2}{}}}$, on the time interval $\sqb{0, \tend}$, $\ee_h$ can be estimated by
\begin{align*}
  \norm{\ee_h(\tend)}_\LP{2}{}^2
    + \nu \norm{\nabla \ee_h}_\Lp{2}{\rb{\LP{2}{}}}^2 
   &\leqslant 
     e^{1 + 2\norm{\nabla \helm_h(\uu)}_\Lp{1}{\rb{\LP{\infty}{}}}} \times \left (
     \nu \norm{\nabla \sqb{\stokes_h(\uu) - \helm_h(\uu)}}_\Lp{2}{\rb{\LP{2}{}}}^2 \right. \\
     &  \left. + 2 \tend \int_0^\tend 
  			\norm{ \nabla \helm_h(\uu)}_\LP{\infty}{}^2\norm{\uu - \helm_h(\uu)}_\LP{2}{}^2 
  			+ \norm{\uu}_\LP{\infty}{}^2 \norm{\nabla \sqb{\uu - \helm_h(\uu)}}_\LP{2}{}^2
   		\dtau
     \right ).
\end{align*}
\end{theorem}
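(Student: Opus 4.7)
The plan is to test both the discrete equation \eqref{eq:H1-FEM} and the weak continuous equation \eqref{eq:weak:NSE} with $\vv_h = \ee_h(t) = \uu_h(t) - \helm_h(\uu(t)) \in \VV_h^\dvg$ and subtract. This is admissible because both $\uu_h$ and $\helm_h(\uu)$ lie in $\VV_h^\dvg$, and in the pressure-robust (divergence-free) case $\VV_h^\dvg \subset \LPOne{2}_\sigma\OMEGA$, so the test function is pointwise divergence-free. Consequently the pressure contributions $(p,\DIV\ee_h)$ and $(p_h,\DIV\ee_h)$ and the skew-symmetrisation $\tfrac12\bigl((\DIV\uu_h)\uu_h,\ee_h\bigr)$ all vanish. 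For the time derivative I would use that $\helm_h$ is the $\LP{2}{}$ projection onto $\VV_h^\dvg$ and commutes with $\partial_t$; since $\ee_h\in\VV_h^\dvg$, the term $(\partial_t(\helm_h(\uu)-\uu),\ee_h)$ drops and one is left with $\tfrac12\tfrac{d}{dt}\norm{\ee_h}_\LP{2}{}^2$. I would assume (or absorb into the Gr\"onwall inhomogeneity) the natural choice $\uu_{0h}=\helm_h(\uu_0)$, so that $\ee_h(0)=\zero$.

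For the viscous term I would insert the Stokes projector to write $\nu(\nabla(\uu_h-\uu),\nabla\ee_h) = \nu\norm{\nabla\ee_h}_\LP{2}{}^2 + \nu\bigl(\nabla(\helm_h(\uu)-\stokes_h(\uu)),\nabla\ee_h\bigr)$, where the contribution $\nu(\nabla(\stokes_h(\uu)-\uu),\nabla\ee_h)$ vanishes by the defining Galerkin orthogonality of $\stokes_h$ against $\VV_h^\dvg$; Young absorbs $\tfrac{\nu}{2}\norm{\nabla\ee_h}^2$ into the left-hand side. For the convective difference I would expand $\uu_h=\helm_h(\uu)+\ee_h$ and obtain
\[\bigl((\uu_h\ip\nabla)\uu_h - (\uu\ip\nabla)\uu,\ee_h\bigr) = \bigl(((\helm_h(\uu)-\uu)\ip\nabla)\helm_h(\uu),\ee_h\bigr) + \bigl((\uu\ip\nabla)(\helm_h(\uu)-\uu),\ee_h\bigr) + \bigl((\ee_h\ip\nabla)\helm_h(\uu),\ee_h\bigr),\]
since the cross-terms $\bigl((\helm_h(\uu)\ip\nabla)\ee_h,\ee_h\bigr)$ and $\bigl((\ee_h\ip\nabla)\ee_h,\ee_h\bigr)$ vanish by skew-symmetry (both advecting fields are pointwise divergence-free). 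The three surviving terms are then bounded by H\"older with $\norm{\nabla\helm_h(\uu)}_\LP{\infty}{}$ and $\norm{\uu}_\LP{\infty}{}$.

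The two best-approximation-type contributions appear linearly in $\norm{\ee_h}$ as $R\,\norm{\ee_h}_\LP{2}{}$ with $R = \norm{\nabla\helm_h(\uu)}_\LP{\infty}{}\norm{\uu-\helm_h(\uu)}_\LP{2}{} + \norm{\uu}_\LP{\infty}{}\norm{\nabla(\uu-\helm_h(\uu))}_\LP{2}{}$. The decisive step is Young's inequality with the specific scaling $\varepsilon=\tend$, i.e.\ $R\,\norm{\ee_h}\leqslant \tfrac{1}{2\tend}\norm{\ee_h}^2 + \tfrac{\tend}{2}R^2$, combined with $R^2\leqslant 2(\cdots)$; this produces precisely the prefactor $2\tend$ in front of the best-approximation integrand. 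Multiplying by two and collecting terms yields a differential inequality of the form
\[\frac{d}{dt}\norm{\ee_h}_\LP{2}{}^2 + \nu\norm{\nabla\ee_h}_\LP{2}{}^2 \leqslant \nu\norm{\nabla(\helm_h(\uu)-\stokes_h(\uu))}_\LP{2}{}^2 + \Bigl(\tfrac{1}{\tend} + 2\norm{\nabla\helm_h(\uu)}_\LP{\infty}{}\Bigr)\norm{\ee_h}_\LP{2}{}^2 + 2\tend\bigl[\cdots\bigr],\]
and the integrating factor in Gr\"onwall's lemma becomes $\exp\!\bigl(\int_0^\tend\! \tfrac{1}{\tend} + 2\norm{\nabla\helm_h(\uu)}_\LP{\infty}{}\,dt\bigr) = e^{1 + 2\norm{\nabla\helm_h(\uu)}_{\Lp{1}{(\LP{\infty}{})}}}$, exactly matching the claim.

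The main technical obstacle is the precise constant-tracking: the stated factor $2\tend$ and the exponent $1 + 2\norm{\nabla\helm_h(\uu)}_{\Lp{1}{(\LP{\infty}{})}}$ arise only from the non-obvious choice $\varepsilon=\tend$ in Young's inequality, which trades $\tfrac{1}{2\tend}\norm{\ee_h}^2$ (integrating to the constant $1$ in the exponent) against $\tfrac{\tend}{2}R^2$ (yielding the $2\tend$ prefactor after $(a+b)^2\leqslant 2(a^2+b^2)$). A secondary but routine point is justifying the commutation of $\partial_t$ with $\helm_h$ and the admissibility of $\ee_h$ as a test function; both follow from the regularity hypotheses $\uu\in\Lp{2}{(\WMP{1}{\infty}{}\cap\HM{3}{})}$ and $\partial_t\uu\in\Lp{1}{(\LP{2}{})}$. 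The uniform-in-$h$ control of $\norm{\nabla\helm_h(\uu)}_\LP{\infty}{}$ appearing in the exponent is provided by Lemma~\ref{lem:W1inftyHelmholtz}, but in the proof itself this quantity is simply carried along as data.
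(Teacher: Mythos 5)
Your proposal is correct and follows essentially the same route as the paper's own proof: error splitting via the discrete Helmholtz--Hodge projector, insertion of the Stokes projector for the viscous term, skew-symmetry to kill the two convective cross-terms, Young's inequality with the weight $\tend$ to produce the $2\tend$ prefactor and the constant $1$ in the Gronwall exponent, and Gronwall's lemma in differential form. The only detail left implicit (and handled explicitly in the paper) is that the dissipation term $-\nu\norm{\nabla\ee_h}_\LP{2}{}^2$ must be carried inside the Gronwall inhomogeneity $\alpha(t)$ and bounded using $e^{\int_s^t\beta}\geqslant 1$ so that it survives on the left-hand side of the final estimate; this is routine.
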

%-----------------

%-----------------
\begin{remark}
Due to the explicit $2\tend$ dependence of the error in Theorem \ref{thm:pr}, in the case $\tend \gg \nu^{-1}$ this estimate can become pessimistic.
Note that for our numerical examples in Section \ref{sec:ExperimentsKnownSol}, we indeed consider short time intervals for which Theorem \ref{thm:pr} is meaningful.
In the literature, usually one can find 'long-term' estimates, e.g., \cite{BochevGunzburgerLehoucq07,BurmanFernandez07,ArndtEtAl15,SchroederLube17b,SchroederEtAl18}, which are sharper for $T \gg \nu^{-1}$, but which are pessimistic for short time intervals.
\end{remark}
%-----------------

%-----------------
\begin{remark}
Concerning the regularity assumption $\uu \in \Lp{2}{\rb{\HM{3}{}}}$ in Theorem~\ref{thm:pr}, let us remark that we have chosen it in such a way that the stability of the discrete Helmholtz projector in Lemma~\ref{lem:W1inftyHelmholtz} is ensured.
This is basically a technical detail and with additional effort, one can presumably reduce the necessary regularity at this point.
However, in the error analysis of this work, we are only interested in relatively smooth flows anyway.
\end{remark}
%-----------------

%-----------------
\begin{ProofOf}{Theorem \ref{thm:pr}}
Note that $\DIV\zz_h=0$ for all $\zz_h\in\VV_h^\dvg$ due to $\VV_h^\dvg \subset \LPOne{2}_{\sigma}\OMEGA$ assuming pressure-robustness.
Due to Galerkin orthogonality, for all $\zz_h \in \VV_h^\dvg$  it holds
\begin{align*}
  \rb{\partial_t \uu_h, \zz_h} 
  	+ \nu \rb{\nabla \uu_h, \nabla \zz_h}
    + \rb{\rb{\uu_h \ip \nabla} \uu_h, \zz_h} 
    	& = \rb{\partial_t\uu, \zz_h} 
    	+ \nu \rb{\nabla \uu, \nabla \zz_h}  
    	+ \rb{\rb{\uu\ip\nabla}\uu,\zz_h}\\
        	&  = \rb{ \partial_t \helm_h(\uu), \zz_h} 
        		+ \nu \rb{\nabla \stokes_h(\uu), \nabla \zz_h}
        		+ \rb{\rb{\uu\ip\nabla}\uu,\zz_h}.
\end{align*}

Here, it was used $(\nabla p, \zz_h)=0$, which is equivalent to $\helm_h(\nabla p) = \zero$, proved in Lemma \ref{lem:DiscHelmDivFreeH1} for pressure-robust space discretisations.
Using the representation $\uu_h(t) = \helm_h(\uu(t)) + \ee_h(t)$ leads to
\begin{align*}
  \rb{\partial_t \ee_h, \zz_h}
  	+ \nu \rb{\nabla \ee_h, \nabla \zz_h}
  	&+ \rb{\rb{\sqb{\helm_h(\uu) + \ee_h} \ip \nabla} \sqb{\helm_h(\uu) + \ee_h}, \zz_h} \\
  		&= \nu \rb{\nabla (\stokes_h(\uu) - \helm_h(\uu)), \nabla \zz_h}
  			+ \rb{\rb{\uu\ip\nabla}\uu,\zz_h},
\end{align*}

for all $\zz_h \in \VV_h^\dvg$, where the initial value for the ODE system is chosen as $\ee_h(0) = \zero$.
For the discrete nonlinear term, we obtain
\begin{align*}
  \rb{\rb{\sqb{\helm_h(\uu) + \ee_h} \ip \nabla} \sqb{\helm_h(\uu) + \ee_h}, \zz_h}
   	 = &\rb{ \rb{\ee_h \ip \nabla} \ee_h, \zz_h} 
   	 	+ \rb{ \rb{\helm_h(\uu)  \ip \nabla} \ee_h, \zz_h } \\
    &+ \rb{ \rb{\ee_h \ip \nabla} \helm_h(\uu), \zz_h}
    	+ \rb{ \rb{\helm_h(\uu) \ip \nabla} \helm_h(\uu), \zz_h}.
\end{align*}

Further, due to the skew-symmetry of the first two terms plus $\DIV\ee_h=\DIV\helm_h(\uu)=0$, testing with $\zz_h = \ee_h$ leads to
\begin{align*}
 \frac{1}{2} \frac{\drm}{\drm t}  \norm{\ee_h}_\LP{2}{}^2 
 	&+ \nu \norm{\nabla \ee_h}_\LP{2}{}^2 \\
  	&\!\!\!\!\!\!\!\!\!\!= \nu \rb{\nabla \sqb{\stokes_h(\uu) -\helm_h(\uu)}, \nabla \ee_h}
  		- \rb{ \rb{\ee_h \ip \nabla} \helm_h(\uu), \ee_h}
  		- \rb{ \rb{\helm_h(\uu) \ip \nabla} \helm_h(\uu), \ee_h} 
  		+ \rb{\rb{\uu\ip\nabla}\uu,\ee_h}.	
\end{align*}

Due to Lemmas \ref{lem:conv:nonlinear:term} and \ref{lem:W1inftyHelmholtz}, the last two terms on the right-hand side can be combined and estimated by
\begin{align*}
	\big( \rb{\helm_h(\uu) \ip \nabla} \helm_h(\uu) &- \rb{\uu \ip \nabla} \uu, \ee_h \big) 
  	 	= \rb{ \rb{\sqb{\helm_h(\uu) - \uu} \ip \nabla} \helm_h(\uu) 
  			+ \rb{\uu \ip \nabla} \sqb{\helm_h(\uu) - \uu}, \ee_h} \\
   		& \leqslant  \norm{ \nabla \helm_h(\uu)}_\LP{\infty}{}
   				\norm{\uu - \helm_h(\uu)}_\LP{2}{} \norm{\ee_h}_\LP{2}{}
     		+ \norm{\uu}_\LP{\infty}{}
    			\norm{\nabla \sqb{\uu - \helm_h(\uu)}}_\LP{2}{}   \norm{\ee_h}_\LP{2}{} \\
   & \leqslant   \tend \rb{
   			\norm{ \nabla \helm_h(\uu)}_\LP{\infty}{}^2
     		\norm{\uu - \helm_h(\uu)}_\LP{2}{}^2 + \norm{\uu}_\LP{\infty}{}^2 
       		\norm{\nabla \sqb{\uu -  \helm_h(\uu)}}_\LP{2}{}^2
          } 
          + \frac{1}{2\tend} \norm{\ee_h}^2_\LP{2}{}.
\end{align*}

Here, the weight $(2\tend)^{-1}$ in front of $\norm{\ee_h}^2_\LP{2}{}$ ensures that later on, the argument of the exponential Gronwall term does not catch any explicit $\tend$ dependence.
The other convection term can be treated simply by the generalised H\"older inequality; that is, 
\begin{align*}
\rb{ \rb{\ee_h \ip \nabla} \helm_h(\uu), \ee_h} 
	\leqslant  \norm{ \nabla \helm_h(\uu)}_\LP{\infty}{} \norm{\ee_h}_\LP{2}{}^2.	
\end{align*}

Using Young's inequality for the remaining term involving Stokes and Helmholtz--Hodge projectors, after rearranging, one obtains the overall estimate
\begin{align*}
 	\frac{\drm}{\drm t}  \norm{\ee_h}_\LP{2}{}^2 
 		+ \nu \norm{\nabla \ee_h}_\LP{2}{}^2
  		 \leqslant  &\nu \norm{\nabla \sqb{\stokes_h(\uu) -\helm_h(\uu)}}_\LP{2}{}^2
  		+ \rb{ \frac{1}{\tend} + 2\norm{\nabla \helm_h(\uu)}_\LP{\infty}{} } \norm{\ee_h}_\LP{2}{}^2 \\
 		&+ 2\tend \rb{
   			\norm{ \nabla \helm_h(\uu)}_\LP{\infty}{}^2
     		\norm{\uu - \helm_h(\uu)}_\LP{2}{}^2 + \norm{\uu}_\LP{\infty}{}^2 
       		\norm{\nabla \sqb{\uu -  \helm_h(\uu)}}_\LP{2}{}^2
          } .
\end{align*}

In such a situation, Gronwall's lemma \cite[Lemma A.54]{John16} in differential form states that for $t\in\sqb{0,\tend}$,
\begin{equation}
	\frac{\drm}{\drm t}  \norm{\ee_h\rb{t}}_\LP{2}{}^2 
		\leqslant  \alpha\rb{t} + \beta\rb{t} \norm{\ee_h\rb{t}}_\LP{2}{}^2 	
		\quad \Rightarrow \quad 
	\norm{\ee_h\rb{t}}_\LP{2}{}^2
		\leqslant  \int_0^t
			\alpha\rb{s}	 \exp\rb{\int_s^t \beta\rb{\tau} \dtau } \drms.
\end{equation}

In order to apply this estimate, one sets
\begin{equation*}
	\beta(t) =\frac{1}{\tend} + 2\norm{ \nabla \helm_h(\uu) }_\LP{\infty}{}	
\end{equation*}
   
and
\begin{align*}
  \alpha(t) 
  	= &-\nu \norm{\nabla \ee_h}_\LP{2}{}^2 
  		+ \nu \norm{\nabla \sqb{\stokes_h(\uu)-\helm_h(\uu)} }_\LP{2}{}^2 \\
    &+ 2\tend \rb{
   			\norm{ \nabla \helm_h(\uu)}_\LP{\infty}{}^2
     		\norm{\uu - \helm_h(\uu)}_\LP{2}{}^2 + \norm{\uu}_\LP{\infty}{}^2 
       		\norm{\nabla \sqb{\uu -  \helm_h(\uu)}}_\LP{2}{}^2
          }, 	
\end{align*}

and computes for $t \geqslant s$
\begin{equation*}
	\exp\rb{\int_s^t \beta(\tau) \dtau} 
		= \exp\rb{ \frac{t-s}{T}   + 2\norm{\nabla \helm_h(\uu)}_{\Lp{1}{\rb{s,t; \LP{\infty}{}}}} }.
\end{equation*}

Using $0 \leqslant  s \leqslant  t \leqslant  T$ and $0 \leqslant  \frac{t-s}{T} \leqslant  1$, one obtains
\begin{align*}
  c \exp\rb{ \frac{t-s}{T}   + 2\norm{\nabla \helm_h(\uu)}_{\Lp{1}{\rb{s,t; \LP{\infty}{}}}} } 
  	\leqslant  \begin{cases}
      	c \exp\rb{1 + 2\norm{\nabla \helm_h(\uu)}_{\Lp{1}{\rb{\LP{\infty}{}}}} } 
      		& \text{for $c \geqslant 0$}, \\
      	c 
      		& \text{for $c<0$}.
    \end{cases}	
\end{align*}

Now, actually applying Gronwall's lemma yields the estimate
\begin{align*}
	&\norm{\ee_h\rb{\tend}}_\LP{2}{}^2
	\leqslant  \int_0^\tend \alpha(s) \exp\rb{\int_s^\tend \beta(\tau) \dtau } \drms
  		 \leqslant   -\nu \norm{\nabla \ee_h}_\Lp{2}{\rb{\LP{2}{}}}^2
       		+ e^{1 + 2\norm{\nabla \helm_h(\uu)}_\Lp{1}{\rb{\LP{\infty}{}}}} \times \\
      &
     \rb{
     \nu \norm{\nabla \sqb{\stokes_h(\uu) - \helm_h(\uu)}}_\Lp{2}{\rb{\LP{2}{}}}^2 
     + 2 \tend \int_0^\tend 
  			\norm{ \nabla \helm_h(\uu)}_\LP{\infty}{}^2\norm{\uu - \helm_h(\uu)}_\LP{2}{}^2 
  			+ \norm{\uu}_\LP{\infty}{}^2 \norm{\nabla \sqb{\uu - \helm_h(\uu)}}_\LP{2}{}^2
   		\dtau
     }.
\end{align*}

Rearranging concludes the proof.     
\end{ProofOf}
%-----------------

%-----------------
\begin{remark}
The estimate in Theorem \ref{thm:pr} is pressure-robust, since if $\uu(t) \in \VV_h^\dvg$ holds for all $t \in  \sqb{0,\tend}$, then it also holds $\uu_h(t) = \uu(t)$ due to $\uu = \stokes_h(\uu)=\helm_h(\uu)$, i.e., the pressure $p$ does not spoil the discrete velocity solution $\uu_h$.
\end{remark}
%-----------------

%-----------------
\begin{remark}
Note that under the assumption that $\ku$ is large enough and $h$ is small enough, one can apply Lemma \ref{lem:W1inftyHelmholtz} to obtain $\norm{ \nabla \helm_h(\uu)}_\LP{\infty}{} \leqslant  C \norm{ \nabla \uu}_\LP{\infty}{}$.
\end{remark}
%-----------------

%------------------------------------------------------------------------------------------------
\subsection{Classical space discretisation}
%------------------------------------------------------------------------------------------------

%-----------------
\begin{theorem}[Non-pressure-robust estimate] \label{thm:classical}%
For the discrete velocity for all $t \in \sqb{0, \tend}$ the following representation is chosen
\begin{equation*}
	\uu_h(t) = \helm_h(\uu(t)) + \ee_h(t),	
\end{equation*}

and the time-dependent evolution of $\ee_h(t)$ is considered.
Then, assuming $\uu \in \Lp{2}{\rb{\WMP{1}{\infty}{} \cap \HM{3}{}}}$, $\partial_t\uu \in \Lp{1}{\rb{\LP{2}{}}}$, $p \in \Lp{2}{\rb{\Hm{1}{}}}$, on the time interval $\sqb{0, \tend}$, $\ee_h$ can be estimated by
\begin{align*}
  \norm{\ee_h(\tend)}_\LP{2}{}^2
    &+ \nu \norm{\nabla \ee_h}_\Lp{2}{\rb{\LP{2}{}}}^2 
   \leqslant  
     e^{1 + 4\norm{\nabla \helm_h(\uu)}_\Lp{1}{\rb{\LP{\infty}{}}}} \times  
     \Big(
     \nu \norm{\nabla \sqb{\stokes_h(\uu) - \helm_h(\uu)}}_\Lp{2}{\rb{\LP{2}{}}}^2 \\
     &+3\tend \norm{\nabla \sqb{p - L_h(p)}}_\Lp{2}{\rb{\LP{2}{}}}^2 
      + 3 \tend \int_0^\tend \big[ 
  			\norm{\helm_h(\uu)}_\LP{\infty}{}^2\norm{\DIV\helm_h(\uu)}_\Lp{2}{}^2 \\
  			& \qquad 
  			+ 2\norm{\nabla \helm_h(\uu)}_\LP{\infty}{}^2\norm{\uu - \helm_h(\uu)}_\LP{2}{}^2 
  			+ 2\norm{\uu}_\LP{\infty}{}^2 \norm{\nabla \sqb{\uu - \helm_h(\uu)}}_\LP{2}{}^2
   		\big]\dtau
     \Big).
\end{align*}
\end{theorem}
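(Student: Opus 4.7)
The plan is to mirror the structure of the proof of Theorem~\ref{thm:pr} while carefully tracking two new contributions specific to the non-pressure-robust setting: a consistency error arising from the pressure gradient (no longer orthogonal to $\VV_h^\dvg$) and an extra convective contribution coming from $\DIV\helm_h(\uu)\neq\zero$. As before, I would split $\uu_h(t)=\helm_h(\uu(t))+\ee_h(t)$ with $\ee_h(0)=\zero$, and test the Galerkin residual with $\zz_h=\ee_h\in\VV_h^\dvg$.

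First, I would write Galerkin orthogonality against $\zz_h\in\VV_h^\dvg$. In contrast to the pressure-robust case, the pressure gradient $(\nabla p,\zz_h)$ survives because $\helm_h(\nabla p)\neq\zero$. The key trick, as in Lemma~\ref{lem:DiscHelmLerayNonDivFreeH1}, is that $L_h(p)\in Q_h\cap H^1(\Omega)$ gives $(\nabla L_h(p),\zz_h)=-(L_h(p),\DIV\zz_h)=0$, so only the consistency error $(\nabla(p-L_h(p)),\zz_h)$ remains and can be bounded by $\norm{\nabla(p-L_h(p))}_\LP{2}{}\norm{\ee_h}_\LP{2}{}$. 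Using Young's inequality with weight $3\tend$ (one of three contributions competing for the same Gronwall bookkeeping slot) produces the term $3\tend\norm{\nabla(p-L_h(p))}_\Lp{2}{\rb{\LP{2}{}}}^2$ in the final estimate.

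Next, I would handle the skew-symmetrised convective form $c_h(\vv_h,\ww_h,\zz_h)=((\vv_h\ip\nabla)\ww_h,\zz_h)+\tfrac{1}{2}((\DIV\vv_h)\ww_h,\zz_h)$. Because $c_h(\uu_h,\ee_h,\ee_h)=0$ by construction, testing leaves only $c_h(\uu_h,\helm_h(\uu),\ee_h)$. Subtracting the continuous convective contribution $((\uu\ip\nabla)\uu,\ee_h)$, I would split into three pieces: (i) the two genuinely pressure-robust-type residuals $((\sqb{\helm_h(\uu)-\uu}\ip\nabla)\helm_h(\uu),\ee_h)$ and $((\uu\ip\nabla)\sqb{\helm_h(\uu)-\uu},\ee_h)$, handled exactly as in Theorem~\ref{thm:pr} using Lemma~\ref{lem:conv:nonlinear:term} and Lemma~\ref{lem:W1inftyHelmholtz} (contributing to the $3\tend$ integrand with the factor $2$ in front of the interpolation terms); (ii) the term $((\ee_h\ip\nabla)\helm_h(\uu),\ee_h)\leqslant\norm{\nabla\helm_h(\uu)}_\LP{\infty}{}\norm{\ee_h}^2_\LP{2}{}$, absorbed into the Gronwall coefficient $\beta(t)$; and crucially (iii) the new divergence contribution $\tfrac{1}{2}((\DIV\helm_h(\uu))\helm_h(\uu),\ee_h)$ (after noting that the $\ee_h$-part of $\DIV\uu_h$ feeds back into the skew-symmetric and so vanishes) bounded by $\norm{\helm_h(\uu)}_\LP{\infty}{}\norm{\DIV\helm_h(\uu)}_\LP{2}{}\norm{\ee_h}_\LP{2}{}$; again Young's inequality with weight $3\tend$ produces the $\norm{\helm_h(\uu)}_\LP{\infty}{}^2\norm{\DIV\helm_h(\uu)}_\LP{2}{}^2$ contribution.

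Finally, I would collect everything into a differential inequality of the form $\frac{\drm}{\drm t}\norm{\ee_h}^2_\LP{2}{}+\nu\norm{\nabla\ee_h}_\LP{2}{}^2\leqslant\alpha(t)+\beta(t)\norm{\ee_h}^2_\LP{2}{}$ with $\beta(t)=\tfrac{1}{\tend}+4\norm{\nabla\helm_h(\uu)}_\LP{\infty}{}$ (the coefficient $4$ instead of $2$ reflecting the two additional small-scale $\norm{\ee_h}_\LP{2}{}$ contributions from the pressure and divergence residuals that must be absorbed using $\tfrac{1}{2\tend}\norm{\ee_h}^2$ weights summing into $\beta$), and then apply Gronwall's lemma in differential form exactly as in the proof of Theorem~\ref{thm:pr}. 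The main obstacle I anticipate is the careful bookkeeping of weights: three distinct $\norm{\ee_h}_\LP{2}{}$ terms (from convection residual, pressure residual, divergence residual) must be absorbed via Young's inequality with matching $1/(2\tend)$ weights so that the Gronwall exponent stays independent of $\tend$ and the three $3\tend$-weighted data terms appear cleanly in the final bound.
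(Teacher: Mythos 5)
Your overall strategy coincides with the paper's: split off the pressure consistency error via the Lagrange interpolant $L_h(p)$, track the extra convective contributions caused by $\DIV\helm_h(\uu)\neq 0$, and close with the same Gronwall argument as in Theorem~\ref{thm:pr}. However, there is one genuine gap in your accounting of the skew-symmetrised convection term. After using $c_h(\uu_h;\ee_h,\ee_h)=0$, the surviving part $c_h(\uu_h;\helm_h(\uu),\ee_h)$ contains \emph{four} pieces, and the fourth one, $\tfrac{1}{2}\rb{\rb{\DIV\ee_h}\helm_h(\uu),\ee_h}$, does \emph{not} vanish: skew-symmetry only eliminates terms with $\ee_h$ in the transported slot, not terms with $\DIV\ee_h$ as the advecting-field divergence. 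Your parenthetical claim that ``the $\ee_h$-part of $\DIV\uu_h$ feeds back into the skew-symmetric and so vanishes'' is therefore incorrect. The paper handles this term by integration by parts,
\begin{equation*}
	\rb{\tfrac{1}{2}\rb{\DIV\ee_h}\helm_h(\uu),\ee_h}
		= -\tfrac{1}{2}\rb{\rb{\nabla\helm_h(\uu)}\ee_h,\ee_h}
		\leqslant \norm{\nabla\helm_h(\uu)}_\LP{\infty}{}\norm{\ee_h}_\LP{2}{}^2,
\end{equation*}
and it is precisely this estimate, together with the analogous bound for $\rb{\rb{\ee_h\ip\nabla}\helm_h(\uu),\ee_h}$, that produces the coefficient $4$ (rather than $2$) in the Gronwall exponent.

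This also means your explanation of the factor $4$ is misattributed: the pressure and divergence residuals do not feed $\norm{\nabla\helm_h(\uu)}_\LP{\infty}{}$-weighted terms into $\beta(t)$; they contribute only to the $\tend$-independent part $\nf{1}{\tend}$ via the three Young's inequalities with weight $\nf{1}{(6\tend)}$ each (summing to $\nf{1}{(2\tend)}$, i.e.\ $\nf{1}{\tend}$ after doubling the differential inequality). Once you reinstate and estimate the $\DIV\ee_h$ term as above, the rest of your plan --- the $L_h(p)$ consistency argument, the splitting of the convective residual as in Theorem~\ref{thm:pr}, the bound $\norm{\helm_h(\uu)}_\LP{\infty}{}\norm{\DIV\helm_h(\uu)}_\Lp{2}{}\norm{\ee_h}_\LP{2}{}$ for the third piece, and the final Gronwall step --- matches the paper's proof and yields the stated estimate.
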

%-----------------

%-----------------
\begin{ProofOf}{Theorem \ref{thm:classical}}
Following the same procedure as for the pressure-robust case, one obtains
\begin{align*}
 	\frac{1}{2} \frac{\drm}{\drm t} \norm{\ee_h}_\LP{2}{}^2 
 		&+ \nu \norm{\nabla \ee_h}_\LP{2}{}^2
  		 = \nu \rb{\nabla \sqb{\stokes_h(\uu) -\helm_h(\uu)}, \nabla \ee_h}
  			+ \rb{\nabla \sqb{p - L_h(p)}, \ee_h} 
  			+ \rb{\rb{\uu\ip\nabla}\uu,\ee_h} \\
  			& - \rb{ \rb{\ee_h \ip \nabla} \helm_h(\uu) + \frac{1}{2}\rb{\DIV\ee_h}\helm_h(\uu), \ee_h}
  			- \rb{ \rb{\helm_h(\uu) \ip \nabla} \helm_h(\uu) + \frac{1}{2}\rb{\DIV\helm_h(\uu)}\helm_h(\uu), \ee_h}  .
\end{align*}

Note that in the non-pressure-robust case  the pressure contribution does not vanish, since it holds $\VV_h^\dvg \not\subset \LPOne{2}_{\sigma}\OMEGA$ and the consistency error $\norm{\helm_h(\nabla p)}_{\LPOne{2}\OMEGA}$ estimated in Lemma \ref{lem:DiscHelmLerayNonDivFreeH1} unavoidably appears in the estimate.
Further, the full skew-symmetric convective term has to be taken into account.
In comparison to the pressure-robust case, there are only two new contributions from the skew-symmetrisation of the convective term which have to be estimated.
For the first one,
\begin{align*}
	\rb{\frac{1}{2}\rb{\DIV\helm_h(\uu)}\helm_h(\uu), \ee_h}
		&= \rb{\frac{1}{2}\rb{\DIV\sqb{\uu-\helm_h(\uu)}}\helm_h(\uu), \ee_h}
		\leqslant  \norm{\helm_h(\uu)}_\LP{\infty}{} \norm{\DIV\sqb{\uu-\helm_h(\uu)}}_\Lp{2}{} \norm{\ee_h}_\LP{2}{} \\
		&\leqslant  
			\frac{3\tend}{2}\norm{\helm_h(\uu)}_\LP{\infty}{}^2\norm{\DIV\sqb{\uu-\helm_h(\uu)}}_\Lp{2}{}^2
			+ \frac{1}{6\tend}\norm{\ee_h}_\LP{2}{}^2
		\end{align*}

can be obtained. 
Applying Young's inequality slightly differently than in the pressure-robust case leads to
\begin{align*}
	\big( \rb{\helm_h(\uu) \ip \nabla} \helm_h(\uu) &- \rb{\uu \ip \nabla} \uu, \ee_h \big) \\
		 &\leqslant   \frac{6\tend}{2} \rb{
   			\norm{ \nabla \helm_h(\uu)}_\LP{\infty}{}^2
     		\norm{\uu - \helm_h(\uu)}_\LP{2}{}^2 + \norm{\uu}_\LP{\infty}{}^2 
       		\norm{\nabla \sqb{\uu -  \helm_h(\uu)}}_\LP{2}{}^2
          } 
          + \frac{1}{6\tend} \norm{\ee_h}^2_\LP{2}{}.
\end{align*}

With the help of integration by parts, the additional contribution in the remaining additional convective term can be estimated as
\begin{equation*}
	\rb{\frac{1}{2}\rb{\DIV\ee_h}\helm_h(\uu),\ee_h}
		= -\frac{1}{2}\rb{\rb{\nabla\helm_h(\uu)}\ee_h,\ee_h}
		\leqslant  \norm{ \nabla \helm_h(\uu)}_\LP{\infty}{} \norm{\ee_h}_\LP{2}{}^2.
\end{equation*}

For completeness, from the pressure-robust case, we repeat
\begin{align*}
\rb{ \rb{\ee_h \ip \nabla} \helm_h(\uu), \ee_h} 
	\leqslant  \norm{ \nabla \helm_h(\uu)}_\LP{\infty}{} \norm{\ee_h}_\LP{2}{}^2.	
\end{align*}

Finally, the additional pressure term can be bounded as follows:
\begin{equation*}
	\rb{\nabla \sqb{p - L_h(p)}, \ee_h}
		\leqslant  \frac{3\tend}{2} \norm{\nabla \sqb{p - L_h(p)}}_\LP{2}{}^2
			+ \frac{1}{6\tend} \norm{\ee_h}_\LP{2}{}^2
\end{equation*}

This yields the overall estimate
\begin{align*}
 	\frac{\drm}{\drm t}  &\norm{\ee_h}_\LP{2}{}^2 
 		+ \nu \norm{\nabla \ee_h}_\LP{2}{}^2 \\
  		 &\leqslant  \nu \norm{\nabla \sqb{\stokes_h(\uu) -\helm_h(\uu)}}_\LP{2}{}^2
  		+ \rb{ \frac{1}{\tend} + 4\norm{\nabla \helm_h(\uu)}_\LP{\infty}{} } \norm{\ee_h}_\LP{2}{}^2
  		+ 3\tend \norm{\nabla \sqb{p - L_h(p)}}_\LP{2}{}^2 \\
 		&\quad+ 3\tend \rb{
 			\norm{\helm_h(\uu)}_\LP{\infty}{}^2\norm{\DIV\helm_h(\uu)}_\Lp{2}{}^2
   			+ 2\norm{ \nabla \helm_h(\uu)}_\LP{\infty}{}^2 \norm{\uu - \helm_h(\uu)}_\LP{2}{}^2 
     		+ 2\norm{\uu}_\LP{\infty}{}^2 \norm{\nabla \sqb{\uu -  \helm_h(\uu)}}_\LP{2}{}^2
          } .
\end{align*}

Similarly as in the pressure-robust case, Gronwall's lemma concludes the proof.
\end{ProofOf}
%-----------------

%------------------------------------------------------------------------------------------------
%------------------------------------------------------------------------------------------------
\section{Consistency errors and the accuracy of low/high-order methods}
\label{sec:Taylor}
%------------------------------------------------------------------------------------------------
%------------------------------------------------------------------------------------------------

The main argument of this contribution is that pressure-robust space discretisations allow to reduce the (formal) approximation order of the algorithms, without compromising the accuracy, since the discrete Helmholtz--Hodge projector $\norm{\helm_h(\nabla p)}_\LP{2}{}$ of classical, non-pressure-robust discretisations suffers from a consistency error. 
This section will now interpret the numerical analysis of Section \ref{sec:ErrorAnalysisH1} according to this point of view. 
The main difference between Theorems \ref{thm:pr} (pressure-robust) and \ref{thm:classical} (non-pressure-robust) is the term
\begin{equation} \label{eq:pr:term}
  3\tend \norm{\nabla \sqb{p - L_h(p)}}_\Lp{2}{\rb{\LP{2}{}}}
\end{equation}

in the $\LP{2}{}$ estimate for the non-pressure-robust error $\ee_h$, which is a direct consequence of the consistency error $\norm{\helm_h(\nabla p)}_\LP{2}{}$ in Lemma \ref{lem:DiscHelmLerayNonDivFreeH1}.
~\\

In the following, let $\kucl$/$\kpcl$ be the order of the discrete velocity/pressure polynomials for a classical, non-pressure robust method and, analogously, $\kupr$/$\kppr$ the orders for a pressure-robust FE discretisation.

%------------------------------------------------------------------------------------------------
\subsection{Lowest-order discretisations} \label{subsec_low:order:discs}
%------------------------------------------------------------------------------------------------

An obvious, seemingly yet unknown conclusion from \eqref{eq:pr:term} is that for non-pressure-robust space discretisations, for $\kpcl=0$ no convergence order at all can be expected on pre-asymptotic meshes in presence of non-negligible pressures $p$.
The reason is simple: discrete $\Pk{0}{}$ pressure do not have any approximation property w.r.t.\ the $H^1$ norm of $p$.
~\\

Recently, in \cite{lr:2018} it was numerically confirmed that the estimate \eqref{eq:pr:term} is sharp.
The discretely inf-sup stable, non-pressure-robust Crouzeix--Raviart element indeed shows an error behaviour $\ee_h = \mathcal{O}(1)$ on pre-asymptotic meshes for $\nu \ll 1$, i.e., no convergence order at all was observed --- a classical locking phenomenon. 
Furhtermore, in the time-dependent Stokes problem, it was shown that classical, non-pressure-robust methods with $\kpcl=\kucl-1$ even lose two orders of convergence in the $\LP{2}{}$ norm w.r.t.\ comparable pressure-robust methods.
E.g., while the pressure-robust Scott--Vogelius element with $(\kupr=2, \kppr=1)$ converges with the optimal order $3$ in the $\LP{2}{}$ norm, the classical, non-pressure-robust Taylor--Hood method $(\kucl=2, \kpcl=1)$ converges only with order $1$ in the $\LP{2}{}$-norm \cite{lr:2018}. 
~\\

Thus, non-pressure-robust space discretisations need higher-order discrete pressure spaces in order to get reasonable convergence orders for their discrete velocities --- since high-order discrete pressure approximations reduce the consistency error $\norm{\helm_h(\nabla p)}_\LP{2}{}$ by a simple Taylor expansion. 
Due to inf-sup stability, usually it holds $\kucl \geqslant \kpcl$ (usually $\kpcl = \kucl-1$), and high-order discrete pressure spaces require high-order discrete velocity spaces as well.
As a conclusion, pressure-robust methods with $\kppr=\kupr-1$ for the time-dependent Stokes problem converge with two more orders of convergence in the $\LP{2}{}$ norm than non-pressure-robust methods with $\kpcl = \kucl-1$, if the pressure $\nabla p$ is non-negligible \cite{lr:2018}.

%------------------------------------------------------------------------------------------------
\subsection{Beltrami flows}
%------------------------------------------------------------------------------------------------

The considerations for time-dependent Stokes problems with $\nu \ll 1$ and large pressure gradients $\nabla p$ are now applied to high-Reynolds number Beltrami flows, where it holds $\ff=\zero$ and $-(\uu \ip \nabla) \uu = -\frac{1}{2} \nabla \abs{\uu}^2=\nabla p$, i.e., the dominant nonlinear convection term induces a large pressure gradient.

%-------------------------------------------------------
\subsubsection{Polynomial potential flows} \label{sub:sec:sec:poly}
%-------------------------------------------------------

A simple consideration allows to show that pressure-robust discretisations sometimes allow to reduce the formal approximation order from $\kucl = 2 k + 1$ (non-pressure-robust, $\kpcl=\kucl-1$) to $\kupr=k$ without compromising the accuracy for the discrete velocities at all.
~\\

Let us assume that for all $t \in  \sqb{0,\tend}$, $(\uu, p)$ is a time-dependent polynomial potential flow with $h(t) \in \Pk{k+1}{}$ for all $t$, i.e., it holds for all times that $\uu(t)=\nabla h(t)$, $\nabla p(t)=-\frac{1}{2}\nabla\abs{\uu(t)}^2$ and $\Delta h(t)=0$. 
Then, $(\uu(t), p(t)) \in \PPk{k}{} \times \Pk{2k}{}$ fulfils (for all fixed $\nu > 0$) the time-dependent Navier--Stokes equations \eqref{eq:transient:navier:stokes} with $\ff=\zero$, and $\uu$ is indeed a Beltrami flow, see Section \ref{sec:GenBeltramiFlows}.
Note that one has to impose, e.g., time-dependent inhomogeneous Dirichlet velocity boundary conditions.
A pressure-robust space discretisation of order $\kupr=k$ will deliver the exact velocity solution $\uu_h(t)=\uu(t)$ for all $t\in  \sqb{0,\tend}$ according to Theorem \ref{thm:pr} on every shape-regular mesh.
On the contrary, non-pressure-robust space discretisations only deliver the exact velocity solution $\uu_h(t)=\uu(t)$, if it also holds $p(t) \in Q_h$ for all $t \in  \sqb{0,\tend}$ according to the consistency error \eqref{eq:pr:term} in Theorem \ref{thm:classical}. 
Thus, classical space discretisations (with $\kpcl=\kucl-1$) require $\kpcl=2 k$, i.e., $\kucl=\kpcl+1 = 2k+1$.
~\\

This observation has been already published in \cite{cmame:linke:merdon:2016}. 
Therefore, we simply refer to the numerical results therein.

%-------------------------------------------------------
\subsubsection{Non-$C^{\infty}$ Beltrami flows}
%-------------------------------------------------------

In the following, it is assumed that $\uu$ is a time-dependent Beltrami flow with $\uu \in \Lp{\infty}{\rb{0,\tend; \HH^{k+1}}}$ for $k \in \mathbb{N}$ and $k \geqslant 1+d/2$ and $\uu \notin \Lp{\infty}{\rb{0,\tend; \HH^{k+1+\eps}}}$ for any $\eps>0$.
According to Theorem \ref{thm:pr} for $\nu \ll 1$, one gets for pressure-robust space discretisations with $\kupr=k$ a convergence order $k \leqslant  \kupr \leqslant  k+1$ for the $\LP{2}{}$ norm; note that the nonlinearity of the problem may lead to a reduction of the convergence order on pre-asymptotic meshes, as compared to the time-dependent Stokes problem \cite{lm:2018}.
~\\

Turning to non-pressure-robust methods, one notes that due to $\nabla p=-\frac{1}{2}\nabla \abs{\uu}^2$ and $\uu \in \Lp{\infty}{\rb{0,\tend; \HH^{k+1}}}$ it holds $p \in \Lp{\infty}{\rb{0,\tend; H^{k+1}}}$.
The result can be proven by the Leibniz formula yielding for the $\kappa$-th derivative of $\frac{1}{2} (f(x))^2$  (with $\kappa=0, 1, \ldots, k+1$)
\begin{equation} \label{eq:leibniz:formula}
  \frac{1}{2} |(f(x))^2|^{(\kappa)} =
    \frac{1}{2} \sum_{i=0}^{\kappa} \binom{\kappa}{i} f^{(\kappa-i)} f^{(i)}.
\end{equation}

According to the Sobolev imbedding theorem for $d \leq 3$, one concludes: for $f \in H^1$ it holds also $f \in L^6$ and for $f \in H^2$ it holds also $f \in L^{\infty}$. 
Therefore, all the terms in \eqref{eq:leibniz:formula} are either in $L^3$ or in $L^2$, if at least $f \in H^2$.
Substituting $f=\abs{\uu}$ and searching for partial derivatives delivers the regularity for the pressure $p$.
 
Due to the spatial regularity $(\uu, p) \notin \HH^{k+1+\eps} \times H^{k+1+\eps}$, one concludes that any space discretisation method with $\kucl > k$ cannot converge with a better convergence order than the pressure-robust method with $\kupr=k$. 
Also the pressure-dependent velocity error contribution \eqref{eq:pr:term} has a consistency error, which cannot be expected to be better than of order $k$.
~\\

Prescribing a certain accuracy of the velocity error of the pressure-robust method like
\begin{equation*}
  \norm{\ee_h}_\Lp{\infty}{\rb{0,\tend; \LP{2}{}}}
    \leqslant  \frac{\eps}{\norm{\uu}_\Lp{\infty}{\rb{0,\tend; \LP{2}{}}}}	
\end{equation*}

for $\uu \not= \zero$ and $\eps \ll 1$ can be fulfilled for mesh sizes $h$ which are fine enough, i.e., $h < h(\eps)$; in the case $\uu(t)\equiv \zero$, every mesh allows for the exact solution $\uu_h(t)=\uu(t)$ for pressure-robust methods.
Now, the solution $\uu$ can be represented as $\uu = \uu_h + (\uu-\uu_h) \eqqcolon \uu_h + \rr$.
~\\

Then, it holds further $\nabla p=- \nabla ( \frac{1}{2}\abs{\uu_h}^2 + \uu_h\ip \rr + \frac{1}{2} \abs{\rr}^2)$ with $\rr \in \Lp{\infty}{\rb{\HH^{k+1}}}$, $\frac{1}{2} \abs{\rr}^2 \in \Lp{\infty}{\rb{\HH^{k+1}}}$ and $\norm{\rr}_\Lp{\infty}{\rb{\LP{2}{}}} \approx {\eps}/{\norm{\uu}_\Lp{\infty}{\rb{\LP{2}{}}}}$.
Due to $\frac{1}{2} \abs{\uu_h}^2 \in \Pk{2k}{(\T)}$ one can get (depending on the flow field $\uu$ and its Sobolev semi norms $|\uu|_{\HH^i}$ for $i=0, 1, \ldots, 2 k-1)$ that the approximation of $\frac{1}{2} \abs{\uu_h}^2 $ by piecewise polynomials from $\Pk{i}{(\T)}$ may lead to a non-negligible error for all $i=0, 1, \ldots, 2k-1$.
Note that the higher derivatives of $-\frac{1}{2} |\uu|^2$ are given by a weighted sum of products of low- and high-order derivatives of $\uu$ according to \eqref{eq:leibniz:formula}.
Then, only the choice $\kpcl=2k$ and $\kucl=2k+1$ can ensure that for the classical, non-pressure-robust method also holds  $\norm{\nabla \sqb{p - L_h(p)}}_\Lp{2}{\rb{\LP{2}{}}}=\mathcal{O}(\eps)$.
~\\

Thus, the considerations for purely polynomial Beltrami flows can also be extended to more general Beltrami flows, and pressure-robust discretisations of formal order $\kupr=k$ can be comparably accurate as classical, non-pressure-robust discretisations of order $\kucl=2 k+1$.

%-------------------------------------------------------
\subsubsection{Analytic Beltrami flows}
%-------------------------------------------------------

For analytic, but non-polynomial Beltrami flows no clear statements beyond Subsection \ref{subsec_low:order:discs} can be given about how much pressure-robust methods allow to reduce the formal order of the approximations without compromising the accuracy.
However, in the numerical examples below we will exclusively compare numerical results on analytic flows, where high-order methods profit from exponential convergence.
Nevertheless, one can confirm that for moderate formal approximation orders up to $\kucl=6$ pressure-robust methods allow for halving the approximation order without compromising the accuracy on coarse meshes.

%------------------------------------------------------------------------------------------------
\subsection{Generalised Beltrami flows} \label{subsec:gen:beltrami:flows}
%------------------------------------------------------------------------------------------------

Actually, pure Beltrami flows are only difficult for classical, non-pressure-robust space discretisations whenever the nonlinear convection term is approximated by the so-called convective form $(\uu_h \ip \nabla) \uu_h$ (or a skew-symmetric variant thereof). 
Alternatively, one can exploit \eqref{eq:nonlin:identity} which tells us that it holds
\begin{equation} \label{eq:rot:form}
  \helm((\uu \ip \nabla) \uu) =
    \helm((\nabla \times \uu) \times \uu).
\end{equation}

Therefore, velocity solutions of the incompressible Navier--Stokes equations \eqref{eq:transient:navier:stokes} also fulfil
\begin{equation*}
  \partial_t\uu - \nu \Delta \uu + \rb{\nabla \times \uu} \times \uu
   + \nabla \pi^\mathrm{rot}  =  \ff, \qquad \DIV \uu = 0,	
\end{equation*}

where the pressure $p$ has been replaced by the new pressure variable
\begin{equation}
  \pi^\mathrm{rot} \coloneqq p + \frac{1}{2} \nabla |\uu|^2.
\end{equation}

This is the so-called rotational or vector-invariant form \cite{lo:2002} of the incompressible Navier--Stokes equations which is equivalent to the convective form.
~\\ 

It should be noted that pressure-robust (exactly divergence-free) methods, like the Scott--Vogelius element, deliver exactly the same discrete velocities for the convective and the rotational form on every mesh, since they appropriately handle the equivalence classes of forces leading to \eqref{eq:rot:form} \cite{bclr:2010}.
On the other hand, classical, non-pressure robust methods, which do not respect the equivalence classes of forces exactly, deliver different discrete velocities.
For Beltrami flows, one obtains
\begin{equation*}
  \pi^{\mathrm{rot}} = -\frac{1}{2} \nabla |\uu|^2 + \frac{1}{2} \nabla |\uu|^2
   = 0	
\end{equation*}

and the issue of a lack of pressure-robustness in classical space discretisations does not play any role for high Reynolds number Beltrami flows using the the rotational form of the Navier--Stokes equations \cite{cmame:linke:merdon:2016}.
~\\

So, why not simply using classical, non-pressure-robust methods in connection with the rotational form for the simulation of high Reynolds number flows?
The reason is that there exist generalised Beltrami flows, --- where $(\uu \ip \nabla) \uu$ is a gradient field --- which can be accurately simulated with classical methods with the convective form, but not accurately with the rotational form at high Reynolds numbers. 
We also refer to e.g.\ \cite{Zang:1991} for numerical investigations showing that the rotational form of the incompressible Navier--Stokes can be inaccurate in FEM discretisations.
~\\

The easiest example is quadratic, planar Hagen--Poiseuille flow in a channel. 
Here, it holds $(\uu \ip \nabla) \uu = \zero$, and the nonlinear convection term is a trivial gradient field, which is, e.g., always a discrete solution of the non-pressure-robust Taylor--Hood element $\PPk{2}{}/\Pk{1}{}$ for all Reynolds numbers on all meshes.
However, using the quadratic Taylor--Hood element in connection with the rotational form will lead to enormous velocity errors on coarse meshes and high Reynolds numbers, since the corresponding pressure $\pi^\mathrm{rot}$ will be a fourth order polynomial, again.
~\\

Moreover, in complicated flows like in a K\'arm\'an vortex street, see Section \ref{sec:KarmanBeltrami}, usually there dominate different types of flows locally: at the inlet a situation similar to a Hagen--Poiseuille flow dominates, where the convective form is accurate for classical discretisations, and in front of the obstacle a situation like a  generalised Beltrami flow may prevail, where other forms of the convective term could be more accurate for non-pressure-robust discretisations.
~\\

In conclusion, pressure-robust discretisations respect \eqref{eq:rot:form} exactly on the discrete level and are thus appropriate for all types of situations.
However, if for a generalised Beltrami flow the classical discretisation in convective form is compared to a pressure-robust discretisation, the pressure-robust discretisation may have a dramatic speedup (for Beltrami-type flows), but it can also happen that there is no speedup at all, since Hagen--Poiseuille with the trivial nonlinear convection term $(\uu \ip \nabla) \uu = \zero$ is also a generalised Beltrami flow.
Thus, the speedup question is not really decidable, but large speedups are achievable for generalised Beltrami flows, see, e.g., the numerical results in Subsection \ref{sec:2DLatticeFlow}.

%------------------------------------------------------------------------------------------------
%------------------------------------------------------------------------------------------------
\section{Gresho vortex problem with $\emph{\textbf{H}}^{\mathbf{1}}$-FEM}
\label{sec:GreshoH1}
%------------------------------------------------------------------------------------------------
%------------------------------------------------------------------------------------------------

A frequently used 2D model problem for investigating how well a discretisation preserves structures is the so-called `Gresho vortex' \cite{GreshoChan90} (originally called `triangle vortex').
Centred at $\cc=\rb{c_1,c_2}^\dag\in\R^2$ and with a (constant) translational velocity $\ww_0\in\R^2$, the problem setup in Cartesian coordinates is fully described by the initial condition
\begin{align} \label{eq:GreshoInitialVortex}
	\uu_0\rb{\xx}
	= \ww_0
	+ \begin{cases}
		\rb{-5\widetilde{x}_2,5\widetilde{x}_1}^\dag,\quad &0\leqslant r < 0.2,\\
		\rb{-\frac{2\widetilde{x}_2}{r}+5\widetilde{x}_2, \frac{2\widetilde{x}_1}{r}-5\widetilde{x}_1}^\dag, &0.2\leqslant r < 0.4,\\
		\rb{0,0}^\dag, &0.4\leqslant r,	
	\end{cases}
\end{align} 

with Euclidean distance from the vortex centre $r=\abs{\xx-\cc}_2$ and $\rb{\widetilde{x}_1,\widetilde{x}_2}=\rb{x_1-c_1,x_2-c_2}$.
The initial Gresho vortex \eqref{eq:GreshoInitialVortex} has a constant vorticity in its core for $r<0.2$ (similar to a rigid body rotation) but then, for $0.2 \leqslant r < 0.4$ it decreases linearly and vanishes for $r\geqslant 0.4$ with discontinuities at $r\in\set{0.2,0.4}$.
In Figure~\ref{fig:Gresho_initial_state} one can get an impression of how the initial velocity field described by \eqref{eq:GreshoInitialVortex} looks like (left-hand side); on the right-hand side, the (discontinuous) initial vorticity is shown.
When $\ww_0\equiv\zero$ is chosen, the name `standing vortex problem' is also used sometimes.
~\\

%------------
\begin{figure}[h]
\centering
	\includegraphics[width=0.31\textwidth]{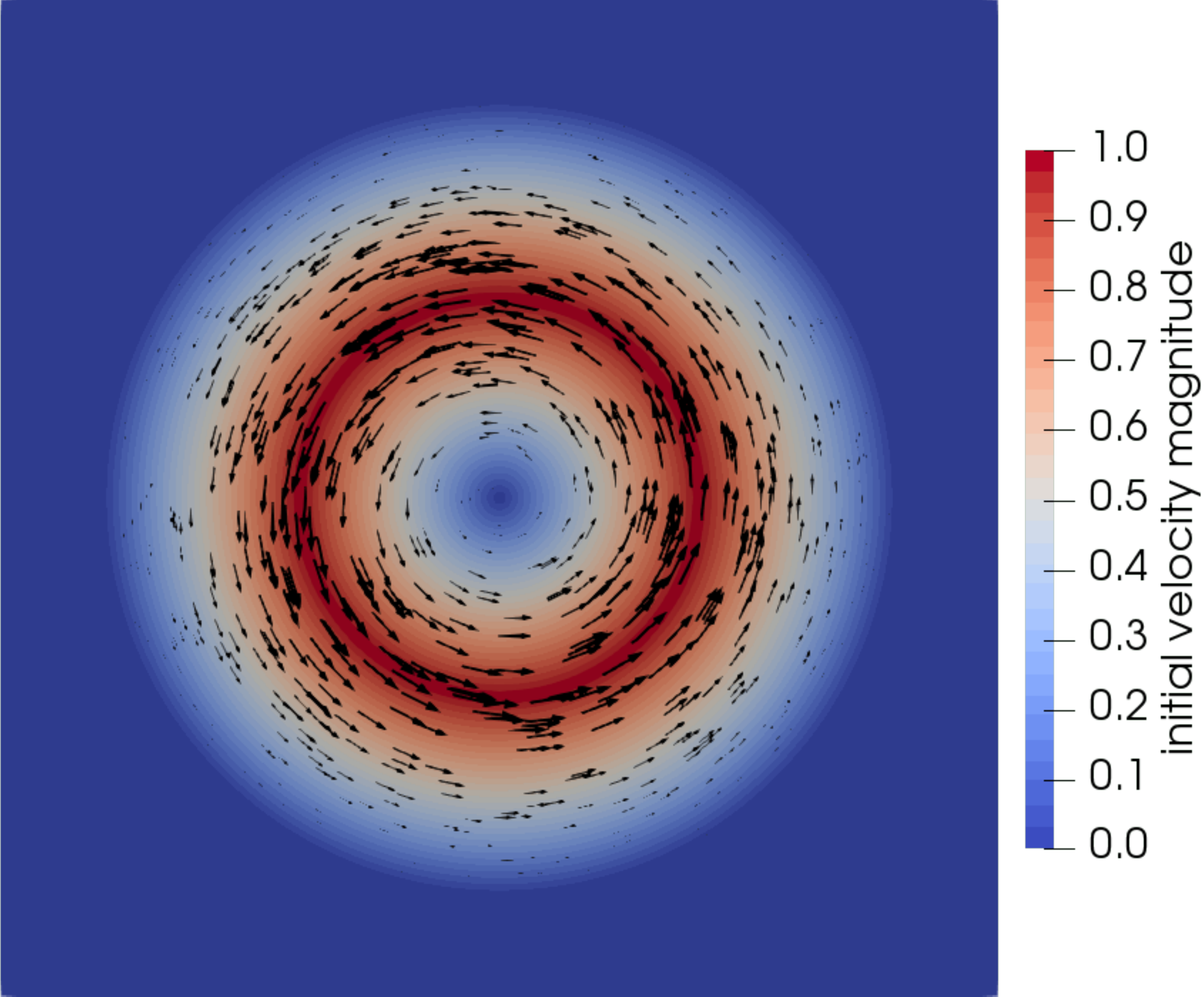} \hspace{10pt}
	\includegraphics[width=0.31\textwidth]{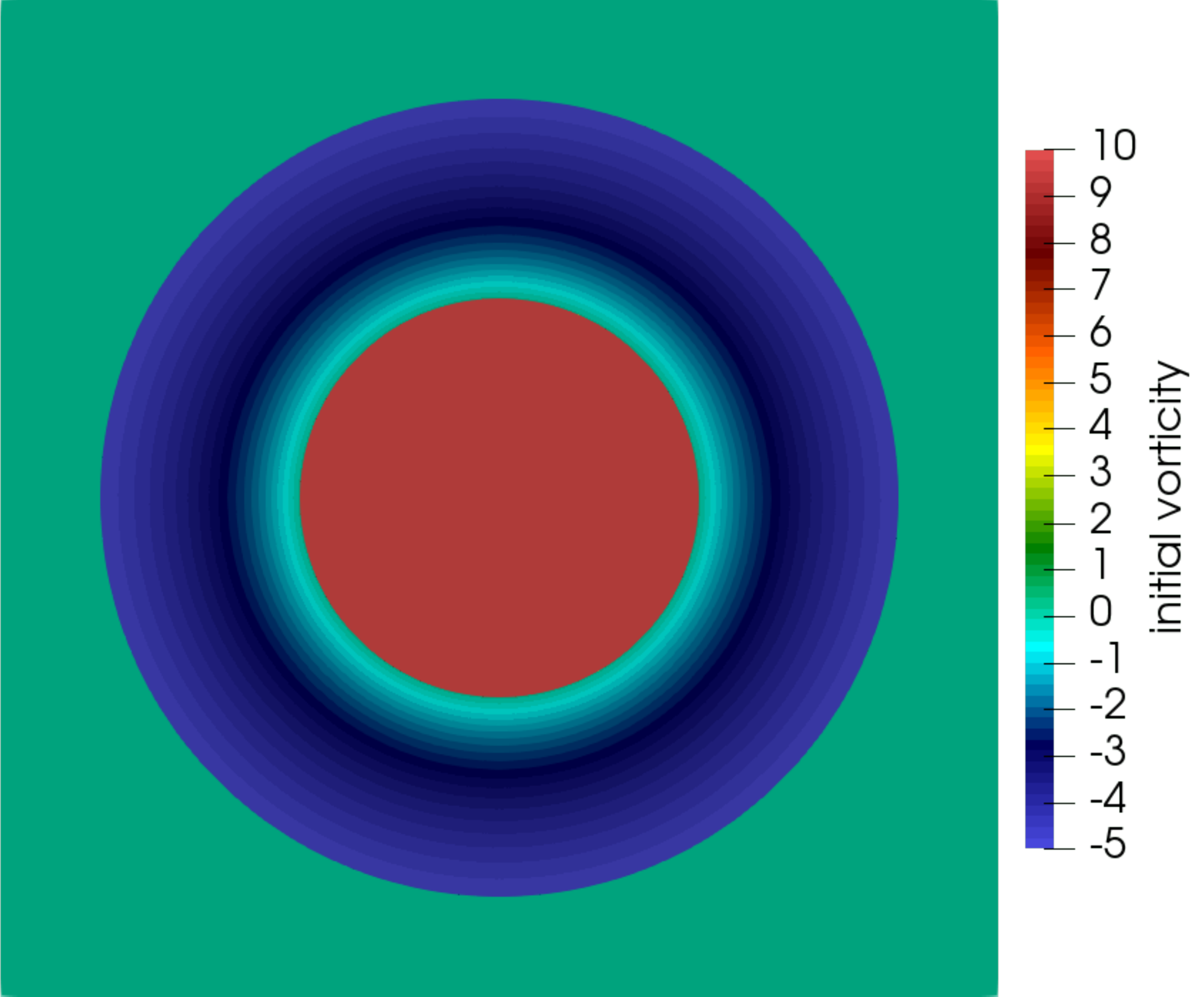}	
\caption{Initial state of Gresho vortex problem. Velocity magnitude $\abs{\uu_0}$ (left) and vorticity $\omega_0$ (right).}
\label{fig:Gresho_initial_state}
\end{figure}
%------------

Our actual simulations are done in the $x_1$- and $x_2$- periodic box $\Omega=\rb{0,1}^2$ with centre $\cc=\rb{0.5,0.5}^\dag$ and up to an end time $\tend=3$.
For a wind $\ww_0=\rb{0,0}^\dag$ we obtain the standing Gresho vortex problem, while, e.g., choosing $\ww_0=\rb{1/3,1/3}^\dag$ corresponds to the moving Gresho vortex setting which we will consider in this work.
The full time-dependent Navier--Stokes problem \eqref{eq:TINS} with $\ff\equiv\zero$ governs the problem with initial condition \eqref{eq:GreshoInitialVortex} and we fix $\nu=\num{E-5}$.
For the spatial discretisation, unstructured triangular meshes are employed, and for the time-stepping we use a constant time step $\Delta t=\num{e-4}$ with the second-order Runge--Kutta (RK) variant ARS(2,2,2) of the implicit-explicit (IMEX) method introduced in \cite{AscherEtAl97}; cf., for example, \cite{Lehrenfeld10} for more details about time integration for PDEs in this context.
Stokes-type subproblems are treated implicitly and the associated matrix for solving linear systems is called $M^\ast$, whereas convection is treated explicitly. All computations in this work have been done with the high-order finite element library \texttt{NGSolve} \cite{Schoeberl14}. 
~\\

In the following, we compare the exactly divergence-free and pressure-robust $\PPk{k}{}/\Pdk{k-1}{}$ Scott--Vogelius method SV$_k$ with the classical non-pressure-robust $\PPk{k}{}/\Pk{k-1}{}$ Taylor--Hood method TH$_k$ for $k\in\set{4,8}$.
The discrete formulation for both methods is given by \eqref{eq:H1-FEM}.
Note again that while SV$_k$ is energy-stable without any modifications to the convection term, TH$_k$ is explicitly used with a skew-symmetrisation involving the divergence of discrete velocities.
~\\

In Table~\ref{tab:DOFs}, the relevant numbers of degrees of freedom (DOFs) and numbers of non-zero entries (NZEs) of $M^\ast$, which result from different triangular meshes are collected.
The reference solution refSV$_k$ (no convection stabilisation) is used to assess the quality of the other less-resolved simulations.
Here, the counted DOFs indicate the costs for explicit operator applications (convection), while the NZEs allow to assess the effort involved in implicit linear solves for the Stokes subproblems.
Note that four different meshes are chosen in order to ensure a (relatively) fair comparison between the different methods and polynomial orders.
~\\

%-----------------
\begin{table}[h]
\caption{Overview of number of mesh elements, DOFs and NZEs of $M^\ast$ for the Gresho problem, based on discretisations with Scott--Vogelius SV$_k$ and Taylor--Hood TH$_k$ of different order $k\in\set{4,8}$. A reference solution refSV$_8$ is also computed. The abbreviations `K' and `M' denote thousands and millions, respectively.}
\label{tab:DOFs}
\centering
\noindent
\begin{tabular}{N N N N N N}
\toprule
	\multicolumn{1}{N}{\textbf{}} & 
		\multicolumn{3}{c}{Scott--Vogelius} & 
		\multicolumn{2}{c}{Taylor--Hood} 
		\\  
\cmidrule[0.05em](lr){2-4}
\cmidrule[0.05em](ll){5-6}
	\multicolumn{1}{c}{Name} 
		& refSV$_8$
		& SV$_8$ 
		& SV$_4$
		& TH$_8$	 
		& TH$_4$ 
		\\ 
\cmidrule[0.05em](lr){1-1}
\cmidrule[0.05em](lr){2-4}
\cmidrule[0.05em](ll){5-6}
	\multicolumn{1}{l}{$\#\set{\mathrm{trigs}}$} 
		& \numQ{23446}
		& \numQ{330}
		& \numQ{1346}
		& \numQ{432}
		& \numQ{2004}  
		\\
	\multicolumn{1}{l}{$\#\set{\uu\,\mathrm{DOFs}}$} 
		& \numQ{1500544}
		& \numQ{21120}
		& \numQ{21536}
		& \numQ{27648}
		& \numQ{32064}  
		\\
	\multicolumn{1}{l}{$\#\set{p\,\mathrm{DOFs}}$} 
		& \numQ{844056}
		& \numQ{11880}
		& \numQ{13460}
		& \numQ{10584}
		& \numQ{9018}  
		\\
	\multicolumn{1}{l}{$\#\set{\text{nze}(M^\ast)}$	} 
		& \numQ{239864425}
		& \numQ{3377120}
		& \numQ{1302724}
		& \numQ{4247969}
		& \numQ{1713399}  
		\\				
\bottomrule
\end{tabular}
\end{table}
%-----------------

Let us begin analysing the flow by first regarding the state at $t=3$, computed with the reference refSV$_8$ method in Figure~\ref{fig:Standing_Gresho_ref}.
On the left-hand side, the vorticity is shown and it becomes clear that the viscosity in the Navier--Stokes problem smoothes out the discontinuities from the initial condition.
For the other two figures, the discrete Helmholtz--Hodge projection \eqref{eq:disc:helm}, computed with the same SV method, has been used to obtain the decomposition $\ff_h=\rb{\uu_h\ip\nabla_h}\uu_h=\HLSV{\ff_h}+\nabla\phi_h$.
One can observe that the gradient part $\nabla\phi_h$ (middle) is clearly dominating the divergence-free part $\HLSV{\ff_h}$ of $\ff_h=\rb{\uu_h\ip\nabla_h}\uu_h$.
Furthermore, $\HLSV{\ff_h}$ is very small and thus, interestingly, this flow behaves approximately like a generalised Beltrami flow.
~\\

%------------
\begin{figure}[h]
\centering
	\includegraphics[width=0.31\textwidth]{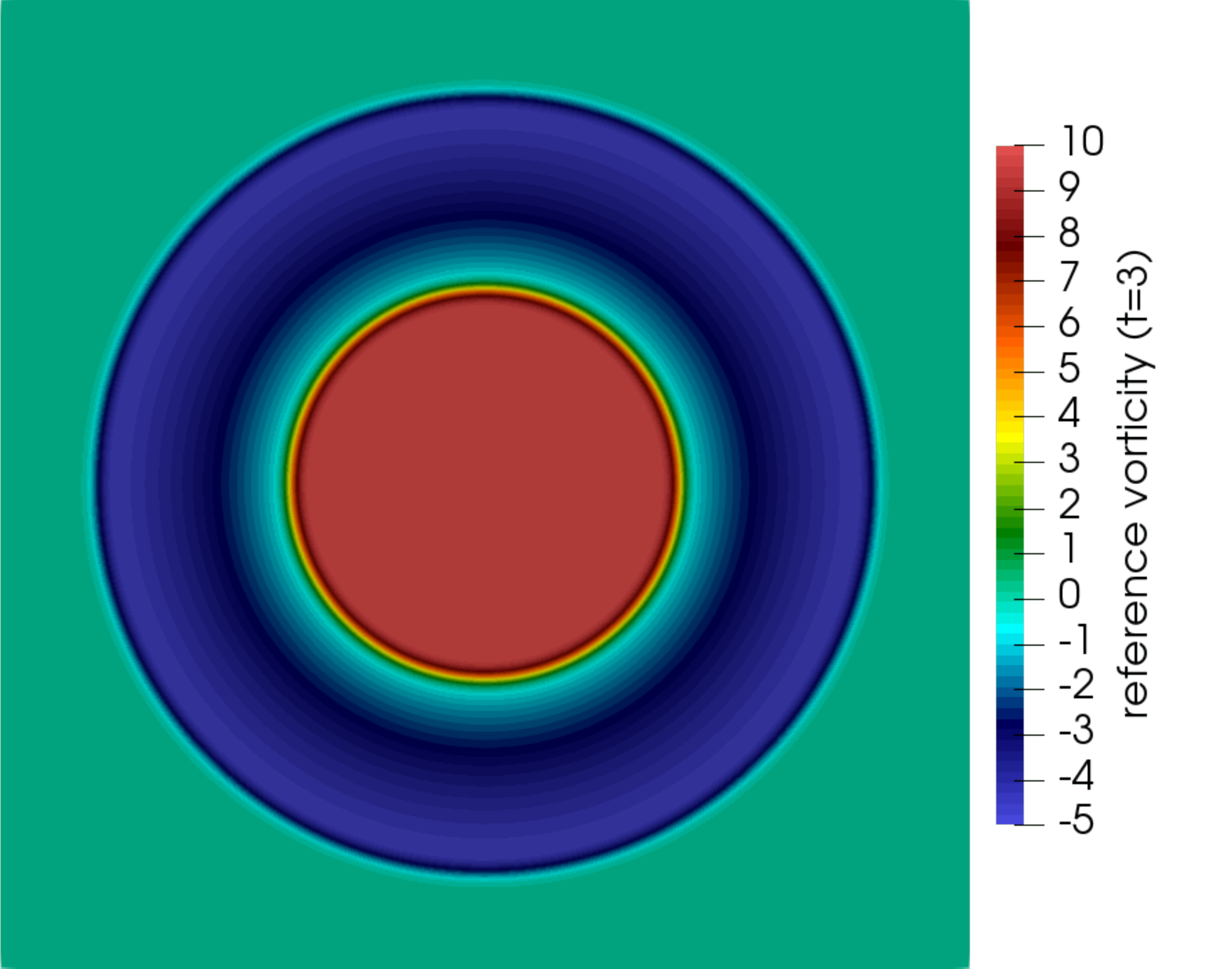} 
	\includegraphics[width=0.31\textwidth]{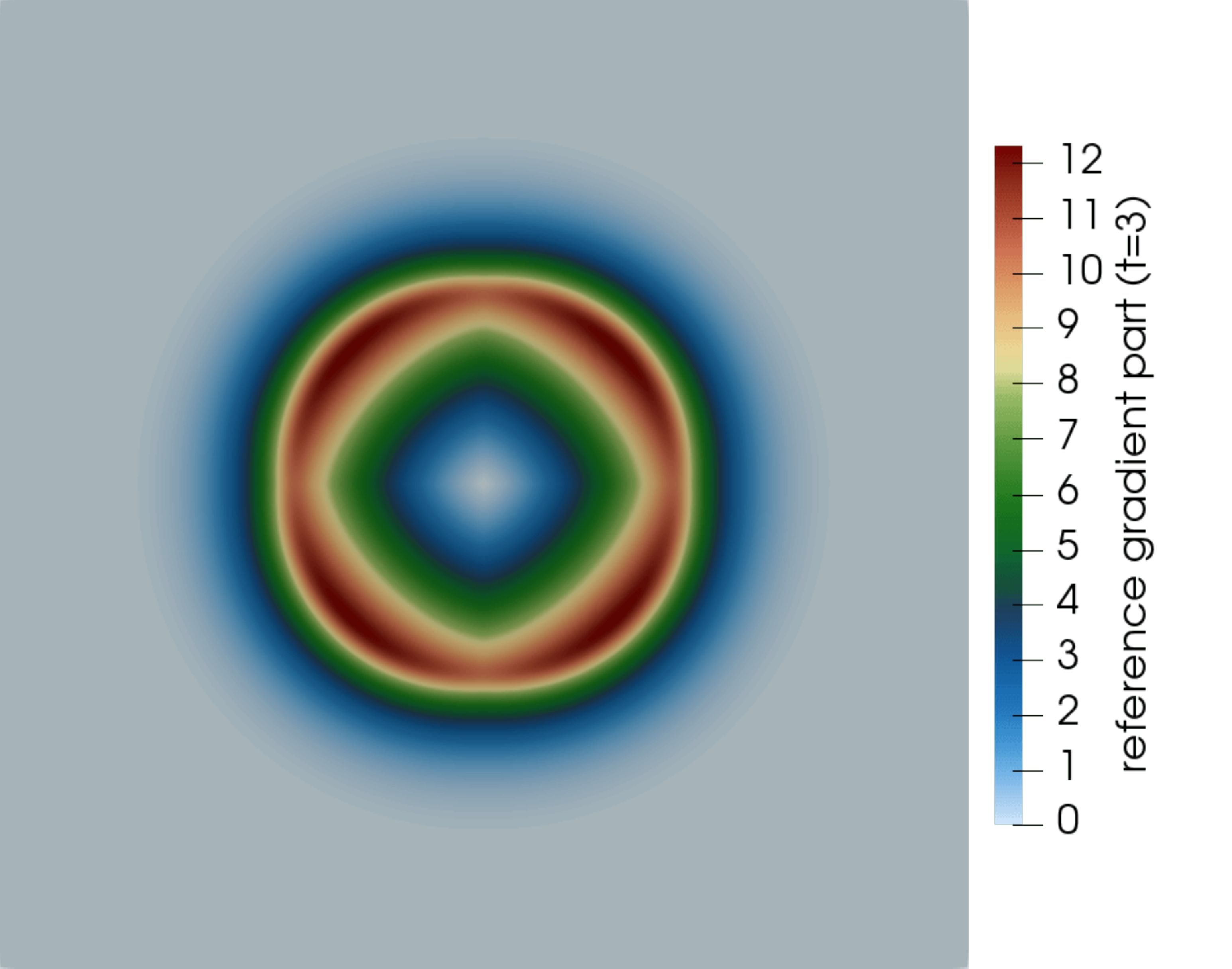}
	\includegraphics[width=0.31\textwidth]{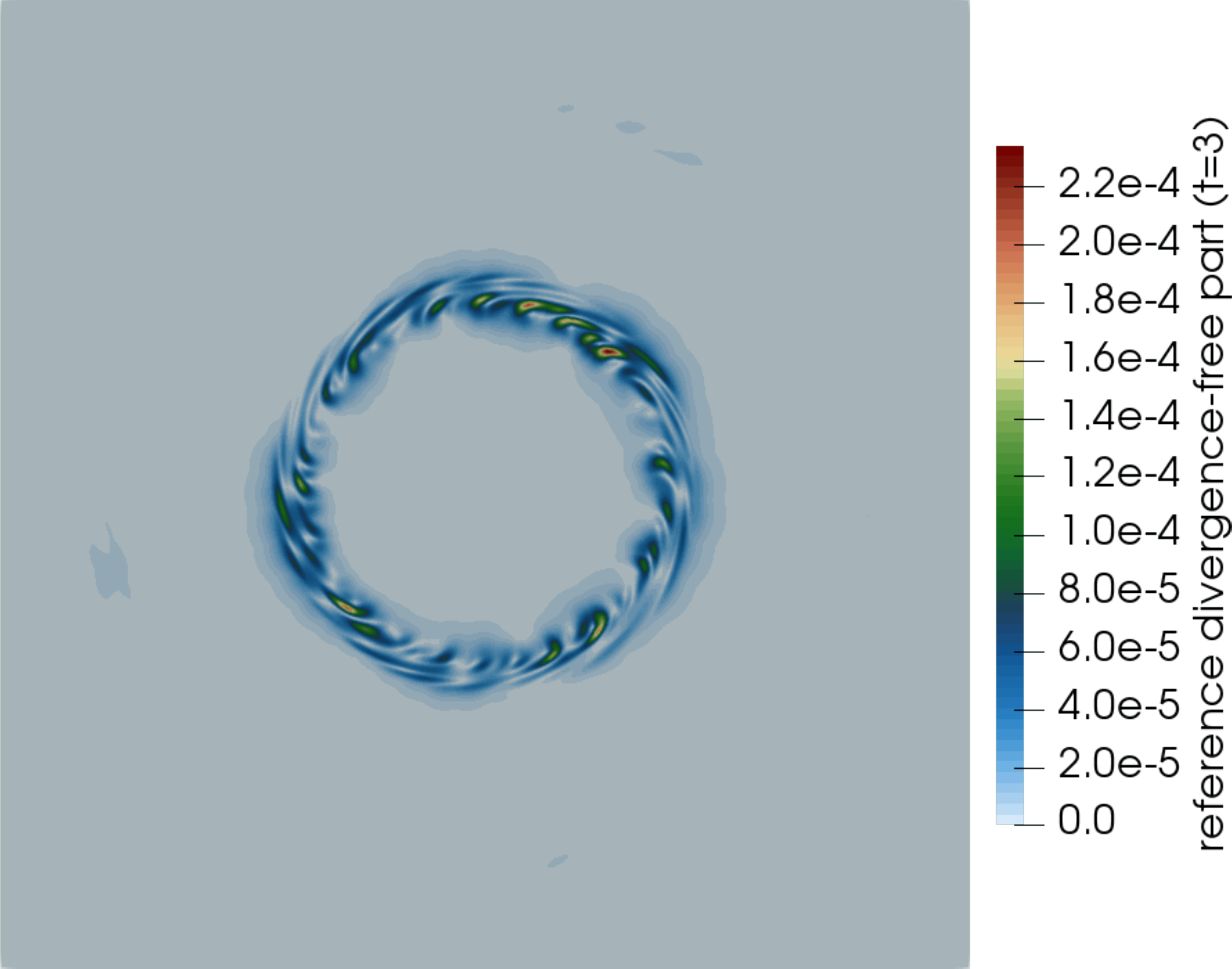}			
\caption{Reference solution for standing Gresho vortex at $t=3$. Vorticity (left), gradient part $\abs{\nabla\phi_h}_\nf{3}{2}^\nf{3}{2}$ (middle) and Helmholtz projection $\abs{\HLSV{\ff_h}}_\nf{3}{2}^\nf{3}{2}$ (right) of discrete convection term $\ff_h=\rb{\uu_h\ip\nabla_h}\uu_h$.}
\label{fig:Standing_Gresho_ref}
\end{figure}
%------------

In view of the explanations in the previous sections, we would expect that a pressure-robust method is in general superior to a non-pressure-robust method for this kind of flow.
And indeed, in the following, we show that the pressure-robust SV method is effortlessly able to preserve the vortex structure of the problem, whereas the non-pressure-robust TH method has certain difficulties with this task.

%------------------------------------------------------------------------------------------------
\subsection{Standing Gresho vortex}
%------------------------------------------------------------------------------------------------

At first, we consider the standing Gresho problem with $\ww_0=\rb{0,0}^\dag$ in \eqref{eq:GreshoInitialVortex}.
Figure~\ref{fig:Standing_Gresho_H1_vorticity} shows the vorticity of the SV$_k$ (pressure-robust) and TH$_k$ (non-pressure-robust) simulations for different polynomial orders $k$ on different meshes (corresponding to Table~\ref{tab:DOFs}) for the standing Gresho problem.
At first, one can observe that the pressure-robust method is able to preserve the structure of the initial condition, whereas the non-pressure-robust methods completely fails to give a reasonable approximation for this (seemingly) easy flow problem.
~\\

%------------
\begin{figure}[h]
\centering
	\includegraphics[width=0.225\textwidth]
		{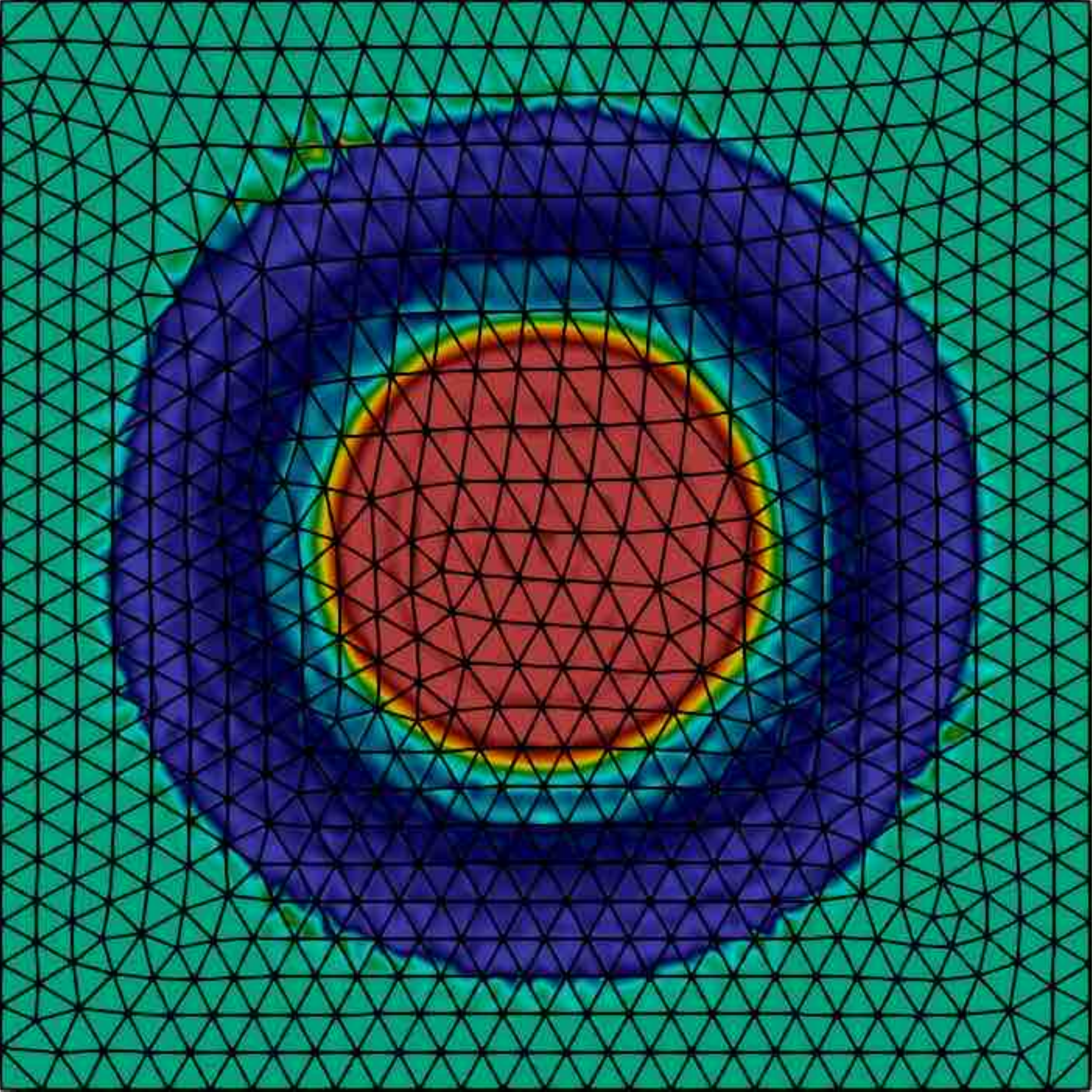} \hspace{5pt}
	\includegraphics[width=0.225\textwidth]
		{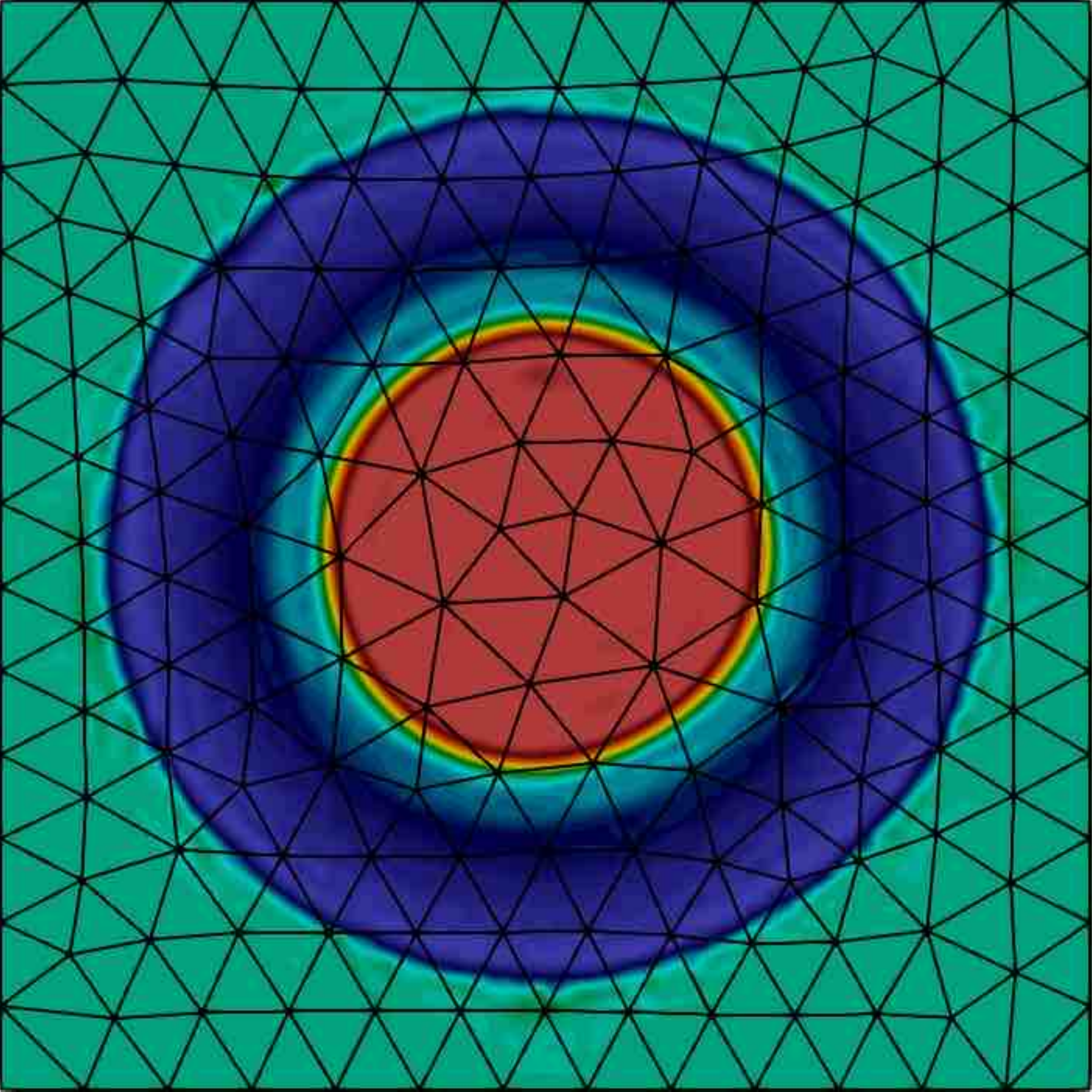} \hspace{5pt}	
	\includegraphics[width=0.225\textwidth]
		{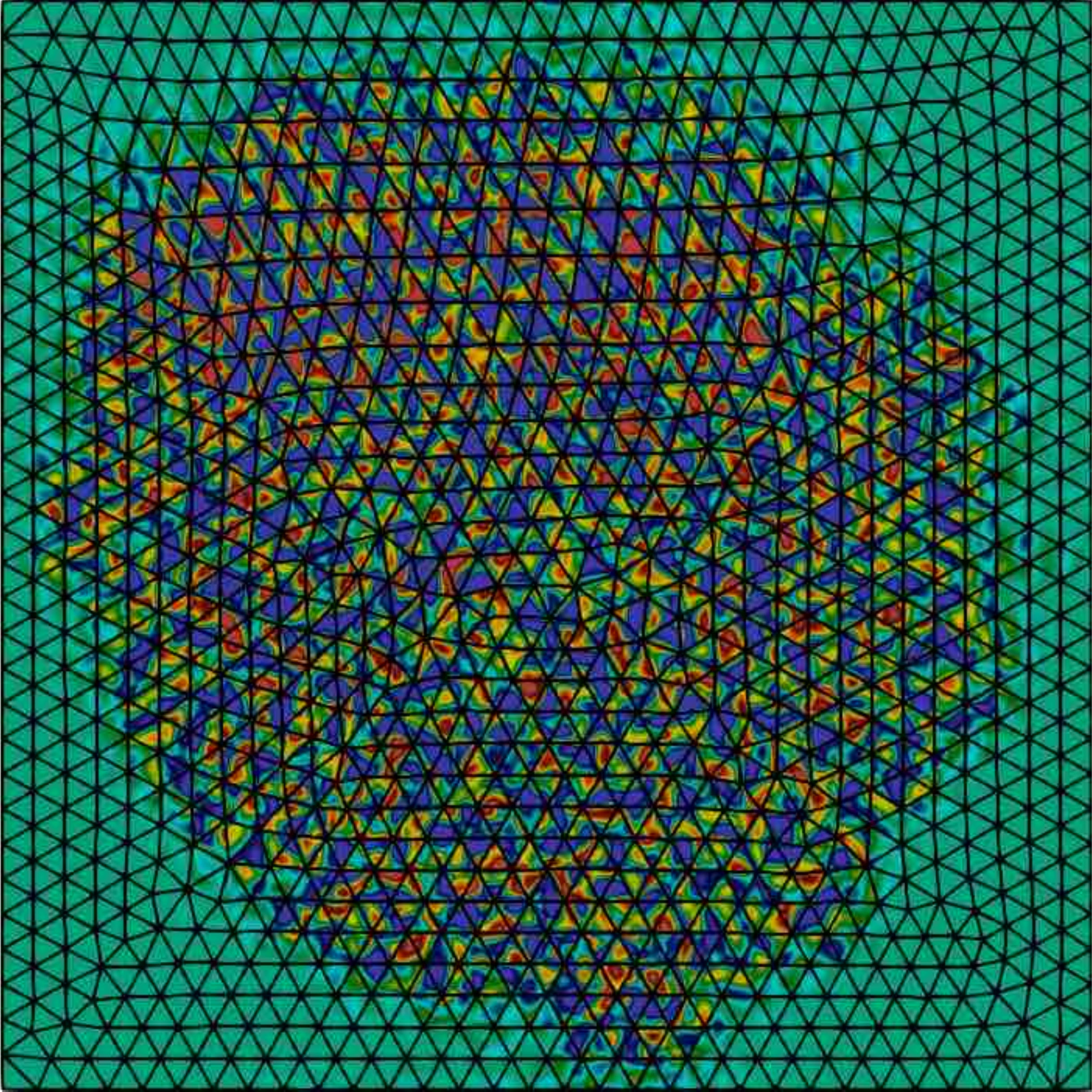} \hspace{5pt}
	\includegraphics[width=0.225\textwidth]
		{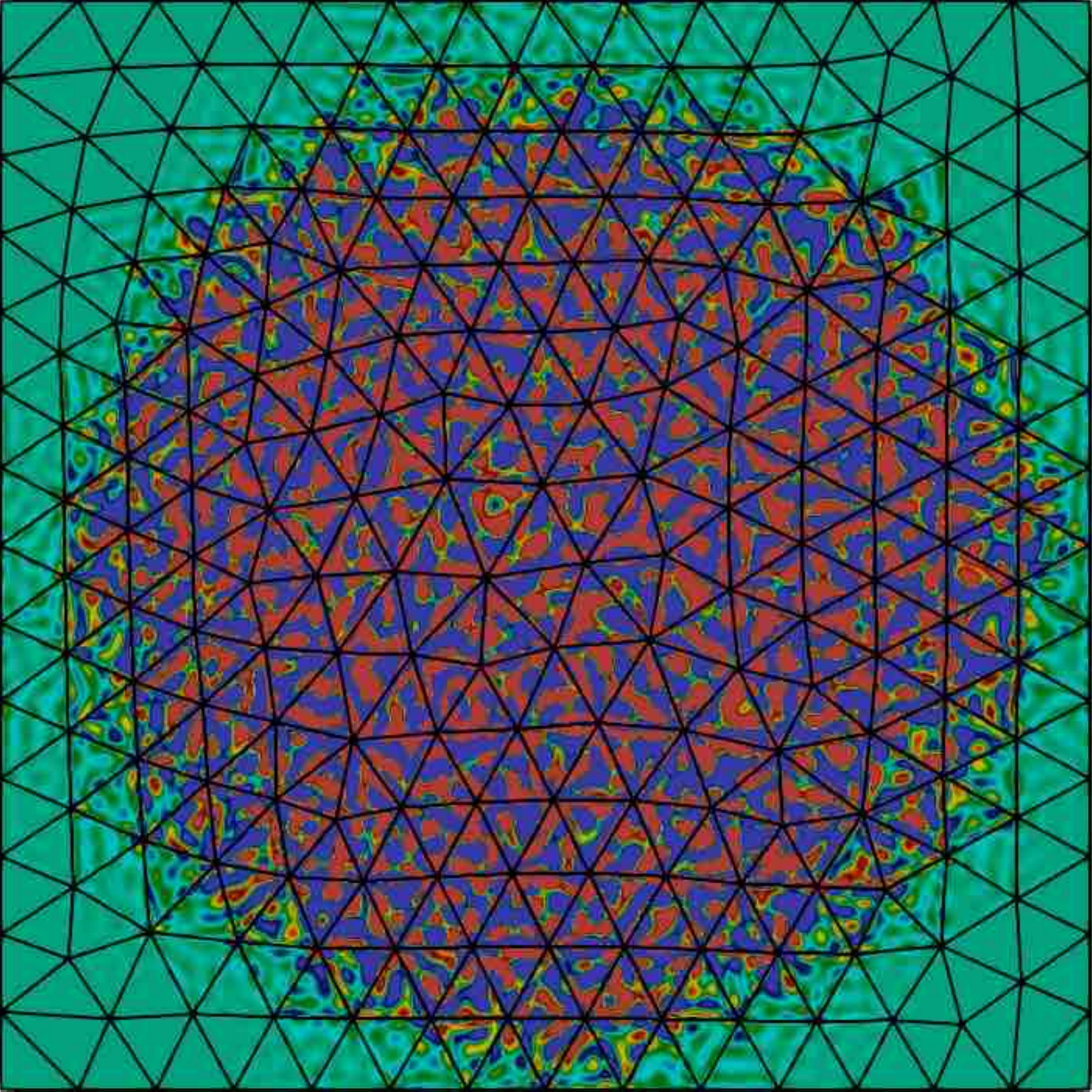} \\ \vspace*{5pt}	
	\includegraphics[width=0.3\textwidth]
		{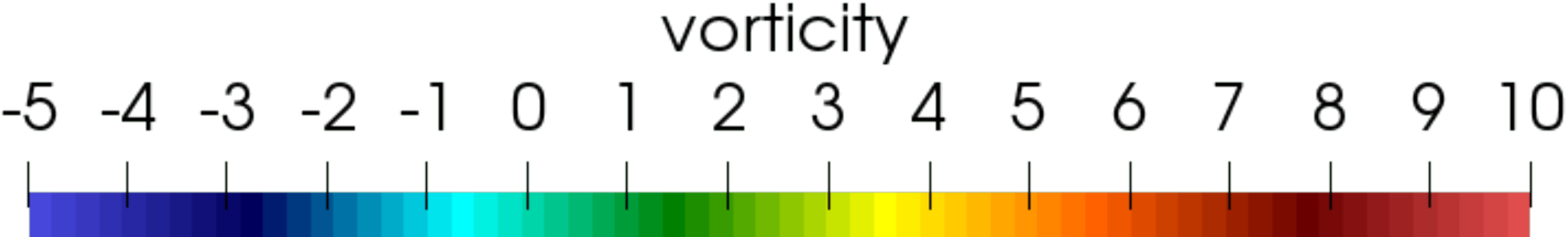}				
\caption{Vorticity of standing Gresho vortex simulations at $t=3$. SV$_4$ (left); SV$_8$ (second from left); TH$_4$ (second from right); TH$_8$ (right). The used meshes, corresponding to Table~\ref{tab:DOFs}, can also be seen.}
\label{fig:Standing_Gresho_H1_vorticity}
\end{figure}
%------------

In view of our reference solution in Figure~\ref{fig:Standing_Gresho_ref}, it becomes clear that it is of utmost importance to be able to handle the occurring curl-free gradient part of the convection term accurately.
Furthermore, one can see that the higher-order method with $k=8$ gives slightly better results in the Scott--Vogelius case.
This is very surprising, as one would generally not expect high-order regularity for the corresponding exact solution in this case.

%------------------------------------------------------------------------------------------------
\subsection{Moving Gresho vortex}
%------------------------------------------------------------------------------------------------

Now, we are dealing with the moving Gresho problem with $\ww_0=\rb{1/3,1/3}^\dag$.
After moving in the top-right direction through the periodic domain, this means that at $\tend=3$, the vortex is intended to be again centred around $\cc=\rb{0.5,0.5}^\dag$.
Before taking a closer look at the vorticity (analogous to Figure~\ref{fig:Standing_Gresho_H1_vorticity}), let us consider important flow quantities monitored over the course of the particular simulation.
~\\

%------------
\begin{figure}[h]
\centering
	\includegraphics[width=0.32\textwidth]
		{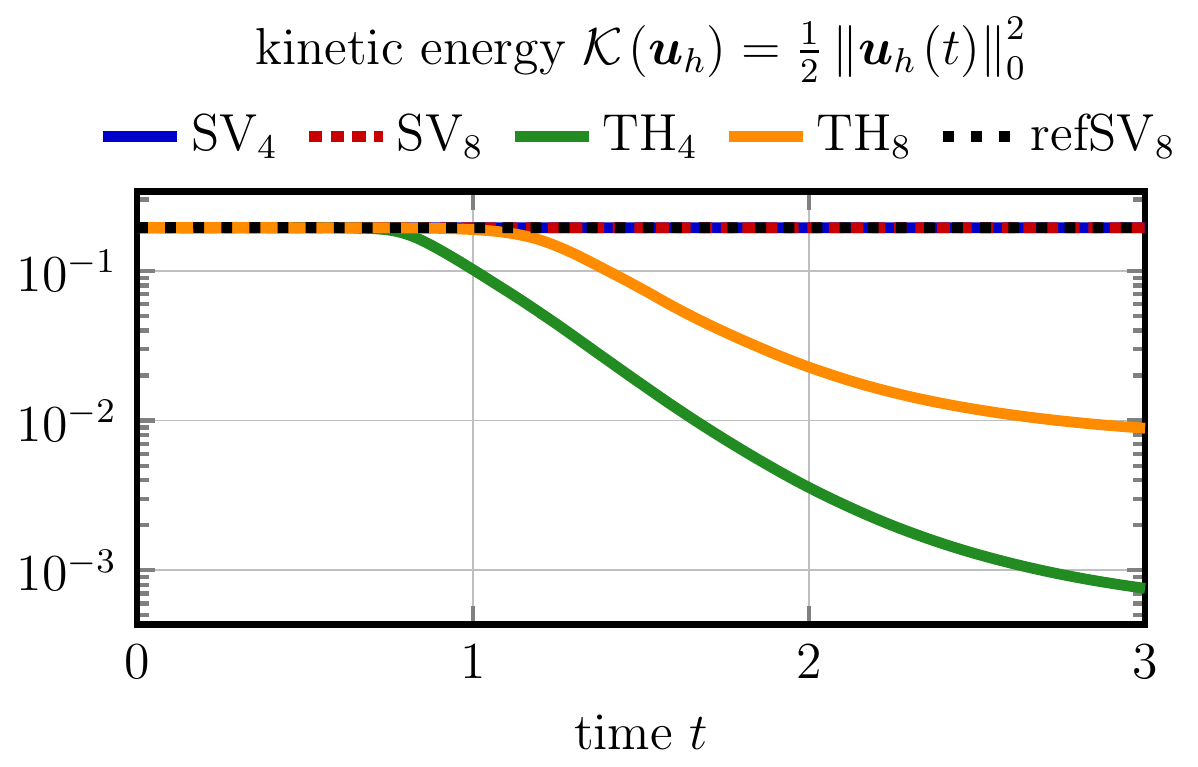} 
	\includegraphics[width=0.32\textwidth]
		{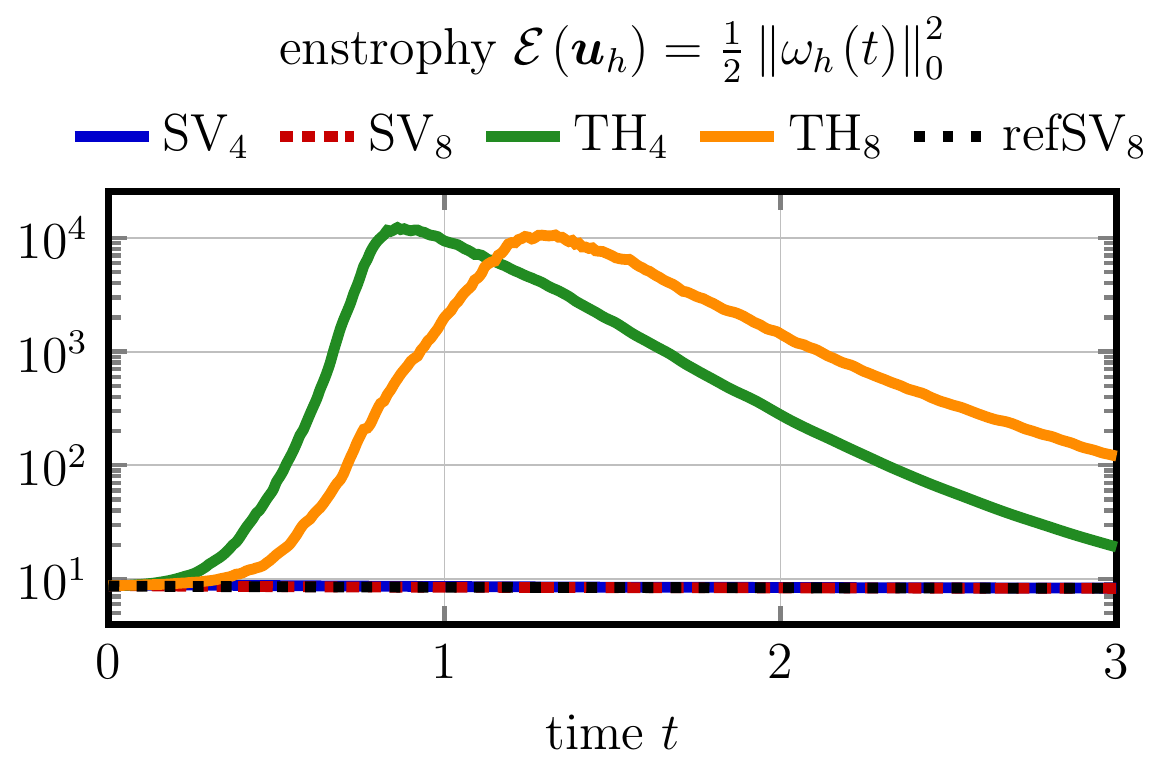} 	
	\includegraphics[width=0.32\textwidth]
		{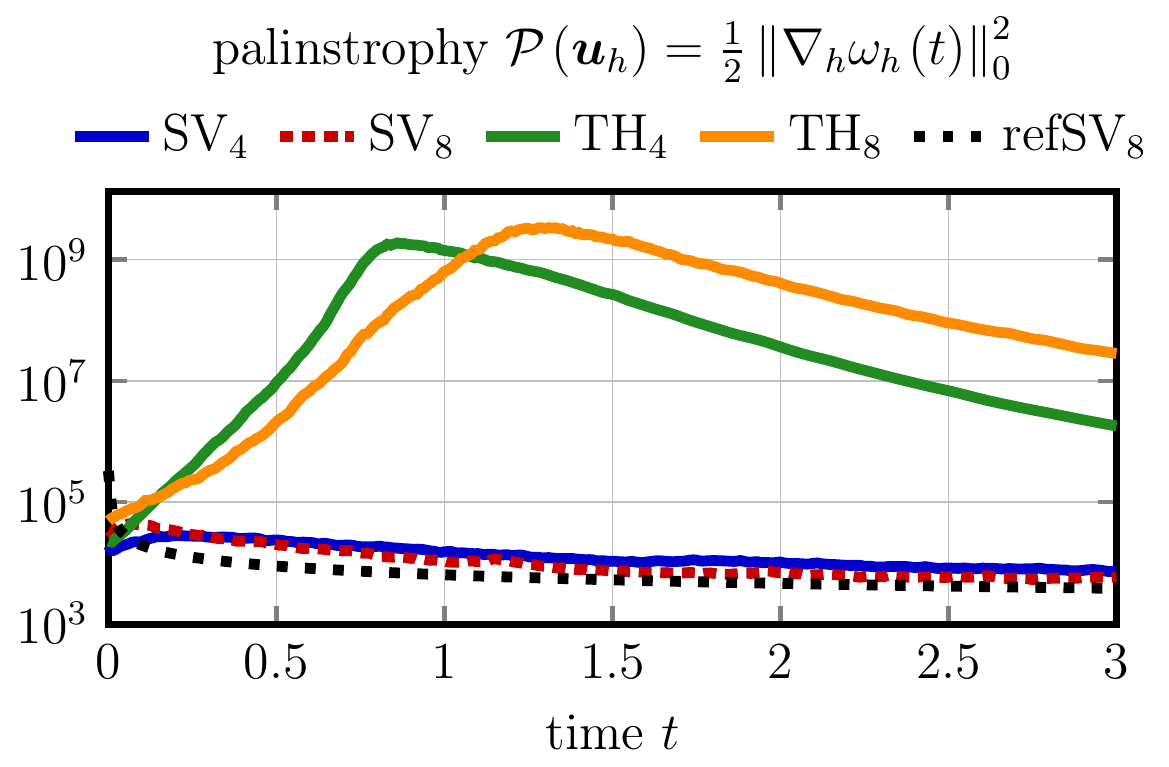} 
\caption{Evolution of kinetic energy $\Kin{\uu_h}$ (left), enstrophy $\Ens{\uu_h}$ (middle) and palinstrophy $\Pal{\uu_h}$ (right) for SV$_k$/TH$_k$ ($k\in\set{4,8}$) and refSV$_8$; see also Table~\ref{tab:DOFs}.}
\label{fig:GreshoMove_quantities}
\end{figure}
%------------

In Figure~\ref{fig:GreshoMove_quantities}, one can see the evolution of kinetic energy $\Kin{\uu_h}=\frac{1}{2}\norm{\uu_h}_\LP{2}{}^2$, enstrophy $\Ens{\uu_h}=\frac{1}{2}\norm{\omega_h}_\Lp{2}{}^2$ and palinstrophy $\Pal{\uu_h}=\frac{1}{2}\norm{\nabla_h\omega_h}_\LP{2}{}^2$ over time.
Again, we have computed a reference solution and SV$_k$/TH$_k$ solutions for $k\in\set{4,8}$ (cf.\ Table~\ref{tab:DOFs}).
For the kinetic energy, one can observe that while the non-pressure-robust solutions coincides with the pressure-robust solutions at the beginning, they show a much earlier rapid decrease in energy.
Here, TH$_8$ seems to be slightly superior to TH$_4$, but still, they are both not satisfactory.
The SV$_k$ solutions, on the other hand, preserve the kinetic energy quite well.
Concerning the enstrophy, again SV$_k$ and refSV$_8$ are undistinguishable.
On the other hand, the TH$_k$ solutions show a spontaneous increase in enstrophy which is absolutely not physical for this freely decaying 2D problem.
Lastly, concerning the palinstrophy, one can observe that the
non-pressure-robust methods yield huge values which are not close to the reference solution.
For the pressure-robust methods, we observe that while SV$_4$ and SV$_8$ do not coincide with refSV$_8$, they at least show qualitatively the same behaviour.
Furthermore, one can see SV$_8$ is slightly more precise than SV$_4$.
This comparison shows that in terms of physically interesting quantities, pressure-robust methods drastically outperform
non-pressure-robust methods for the moving Gresho problem.
~\\

In the following, we want to demonstrate that the material derivative of the moving Gresho problem is indeed approximately a (non-trivial) gradient field and thus velocity-equivalent to a zero force, as suggested by Section~\ref{sec:EulerFlows}.
To this end, we consider the discrete Helmholtz decomposition of the (discrete) material derivative
\begin{equation} \label{eq:HelmholtzMaterialSV}
	\ff_h^t = 
		\partial_t\uu_h + \rb{\uu_h\ip\nabla_h}\uu_h =
		\HLSV{\ff_h^t}+\nabla\phi_h^t,
\end{equation} 

where $\HLSV{\cdot}$ is always the pressure-robust discrete Scott--Vogelius Helmholtz projector.
Note that also for the TH method, the SV Helmholtz projector is used with the intention of being able to distinguish accurately between divergence-free and curl-free forces, also in the non-pressure-robust case.
~\\

%------------
\begin{figure}[h]
\centering
	\includegraphics[width=0.32\textwidth]
		{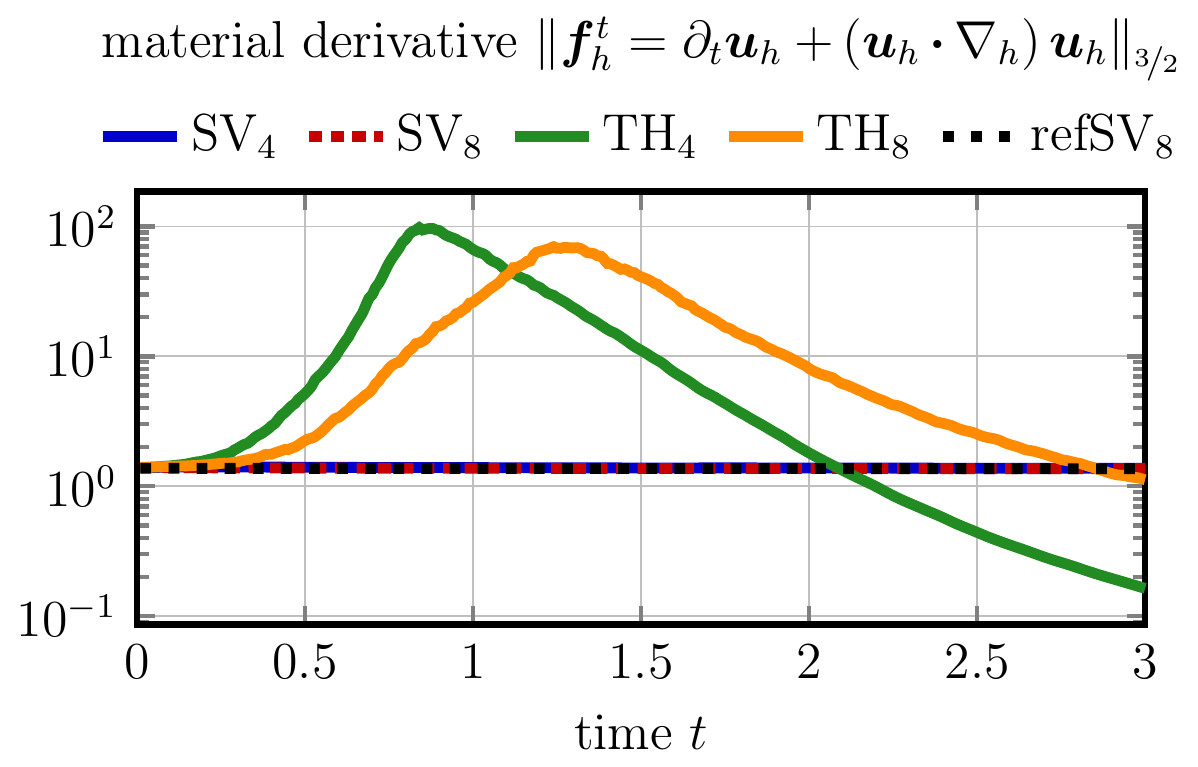} 
	\includegraphics[width=0.32\textwidth]
		{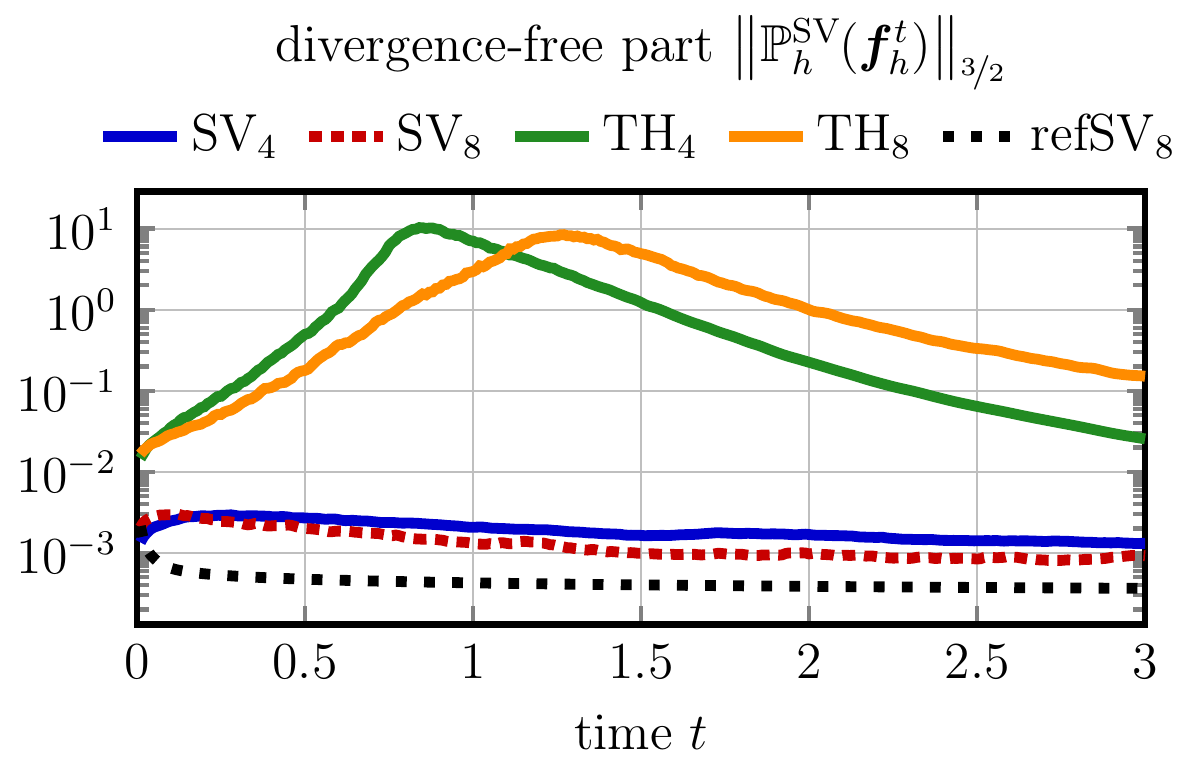} 	
	\includegraphics[width=0.32\textwidth]
		{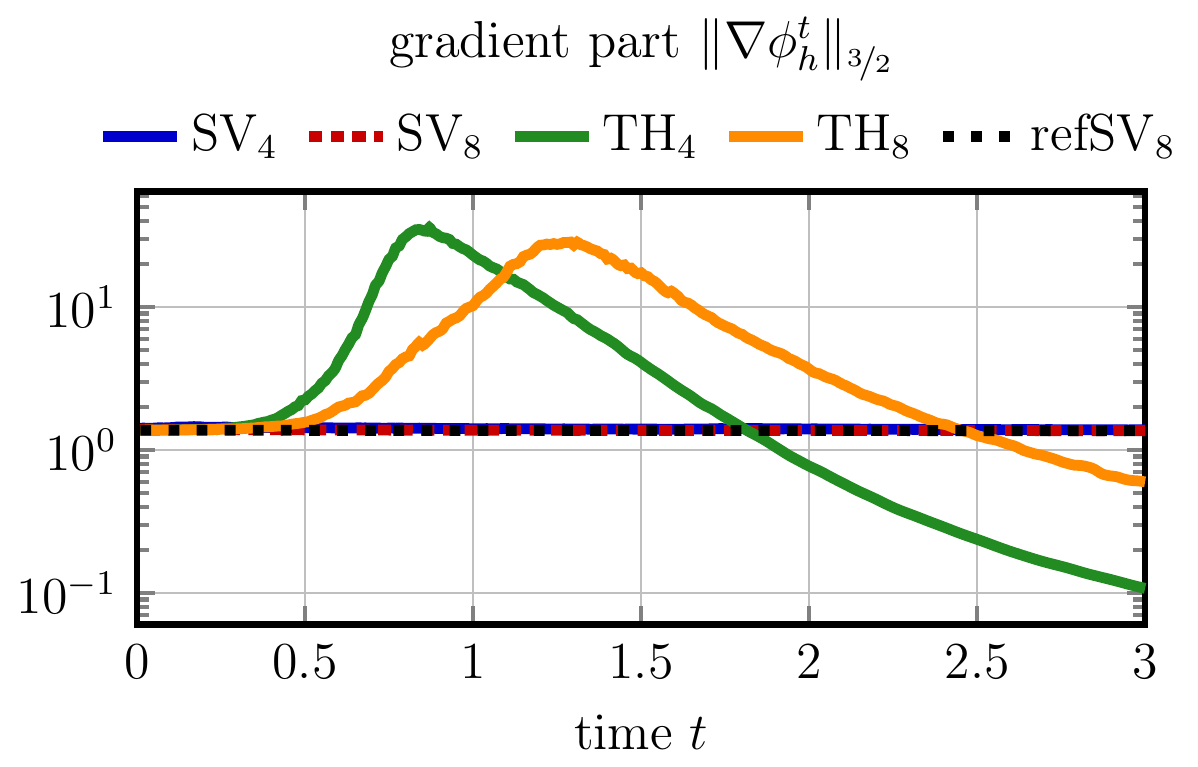} 
\caption{Evolution of $\LP{\nf{3}{2}}{}$-norms of (discrete) material derivative $\ff_h^t=\partial_t\uu_h+\rb{\uu_h\ip\nabla_h}\uu_h$ (left), divergence-free part $\HLSV{\ff_h^t}$ (middle) and gradient part $\nabla \phi_h^t$ (right) for SV$_k$/TH$_k$ ($k\in\set{4,8}$) and refSV$_8$; see also Table~\ref{tab:DOFs}.}
\label{fig:GreshoMove_Helmholtz}
\end{figure}
%------------

Now, Figure~\ref{fig:GreshoMove_Helmholtz} shows the evolution of the $\LP{\nf{3}{2}}{}$-norms of the three terms in \eqref{eq:HelmholtzMaterialSV} for the different methods from Table~\ref{tab:DOFs}.
The plot for  $\norm{\ff_h^t}_\nf{3}{2}$ shows that, compared to the reference solution refSV$_8$, the non-pressure-robust TH$_k$ TH$_k$ methods result in a completely inaccurate material derivative.
From regarding $\norm{\HLSV{\ff_h^t}}_\nf{3}{2}$ and $\norm{\nabla \phi_h^t}_\nf{3}{2}$, it becomes clear why this is the case:
At first the divergence-free part $\HLSV{\ff_h^t}$ of the reference solution is very small,
and this can be preserved well by the divergence-free SV$_k$ methods.
However, the TH$_k$ methods result in a much larger divergence-free force which is not the correct behaviour.
A similar observation holds for the gradient term $\nabla \phi_h^t$: the non-pressure-robust methods do not yield accurate results here.
~\\

%------------
\begin{figure}[h]
\centering
	\includegraphics[width=0.225\textwidth]
		{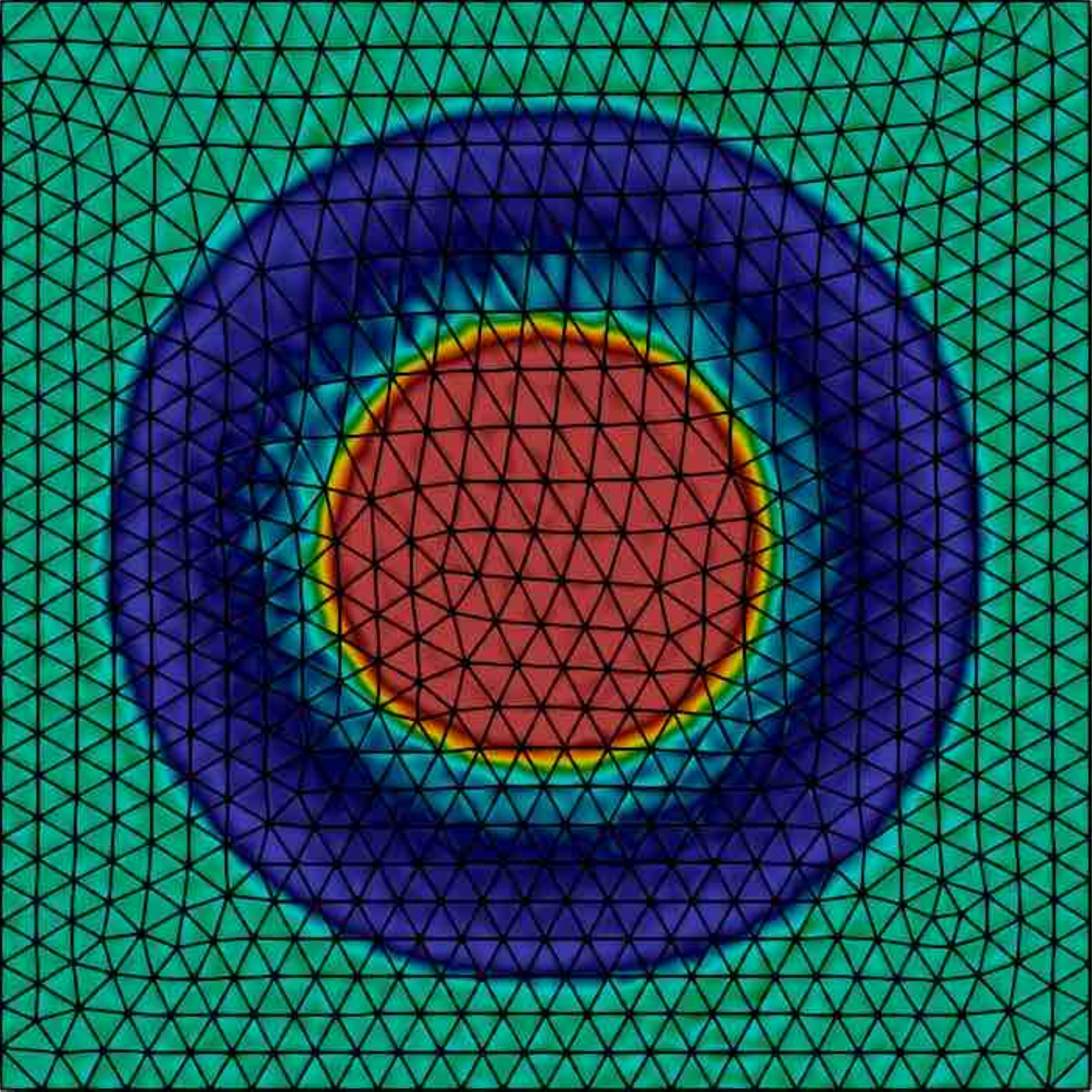} \hspace{5pt}
	\includegraphics[width=0.225\textwidth]
		{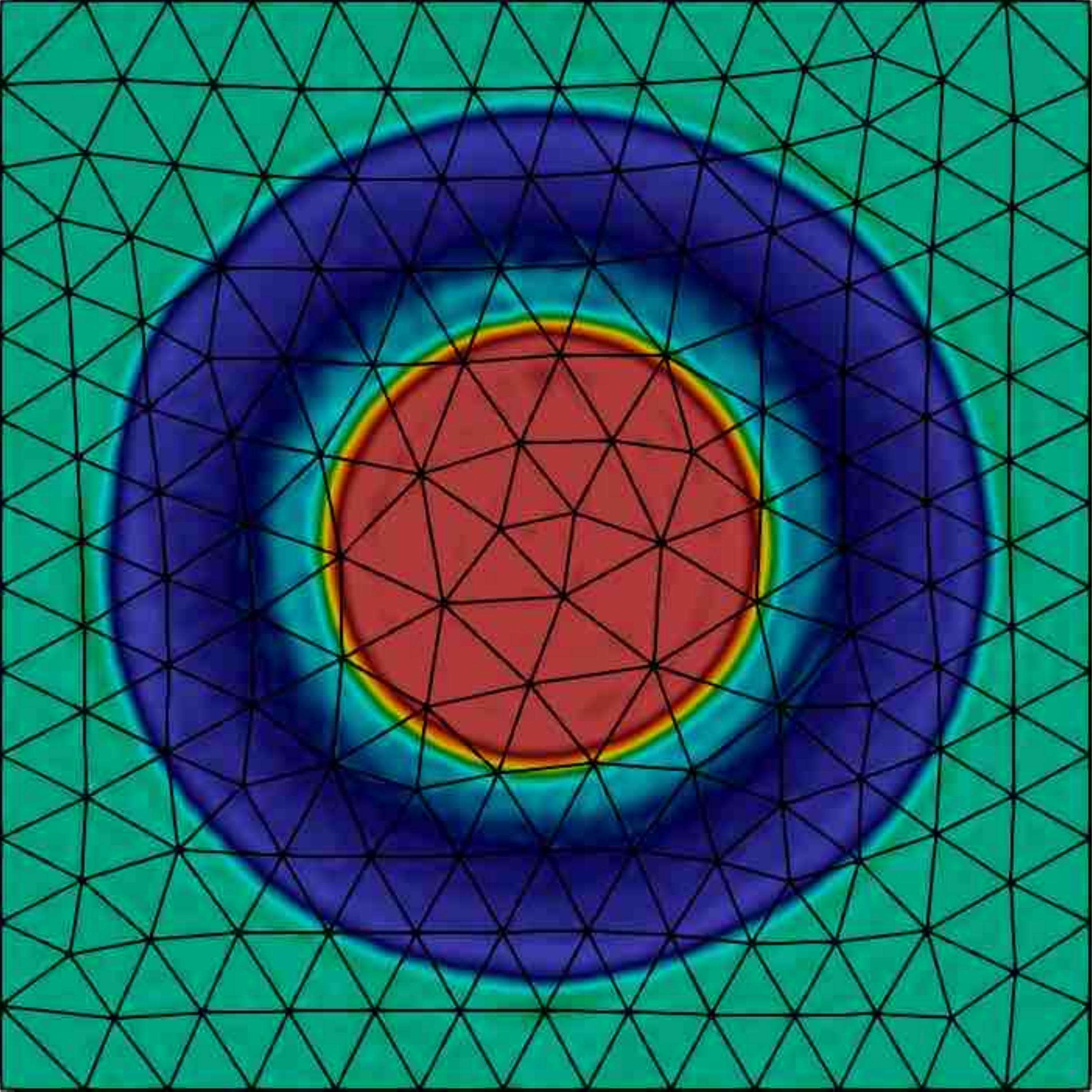} \hspace{5pt}
	\includegraphics[width=0.225\textwidth]
		{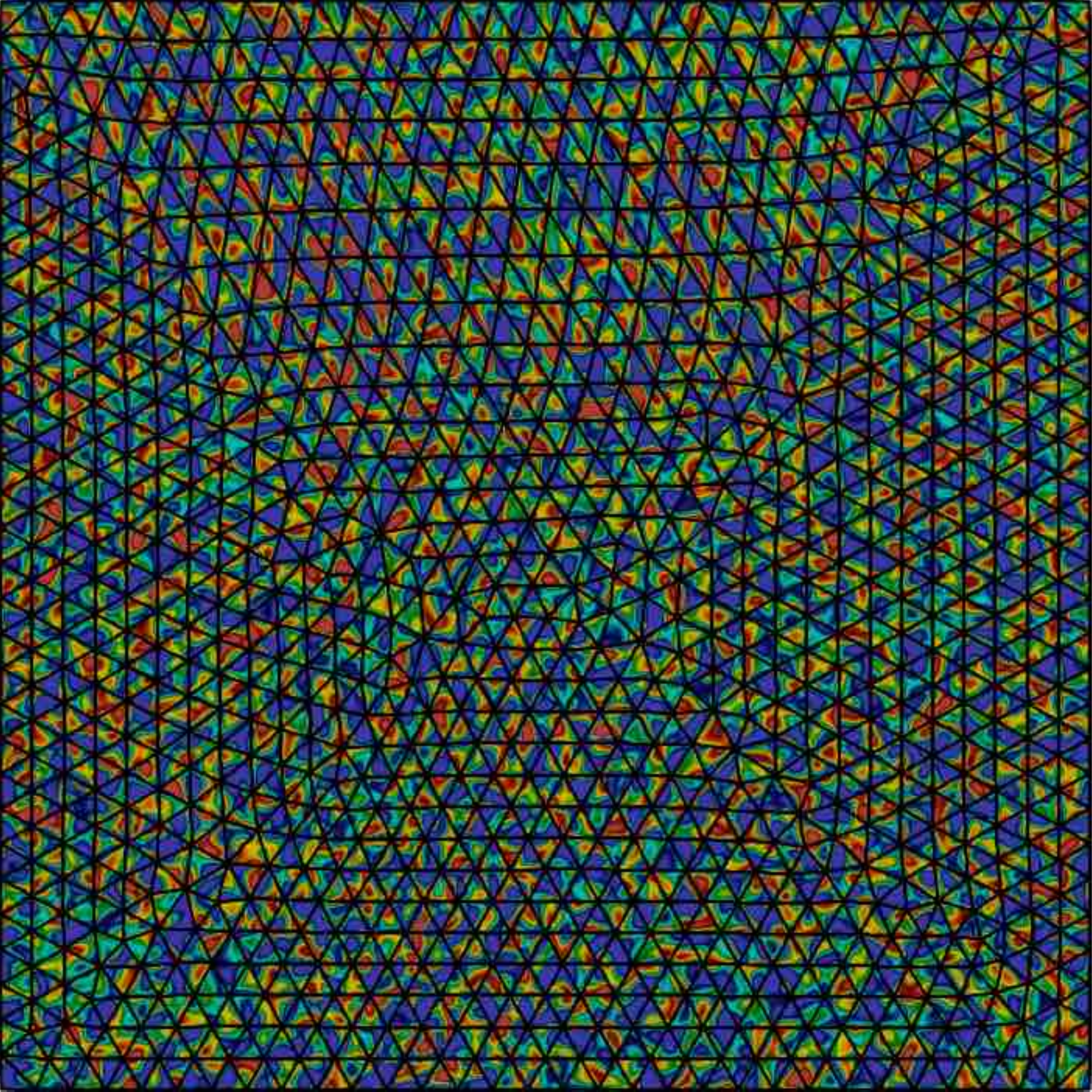} \hspace{5pt}
	\includegraphics[width=0.225\textwidth]
		{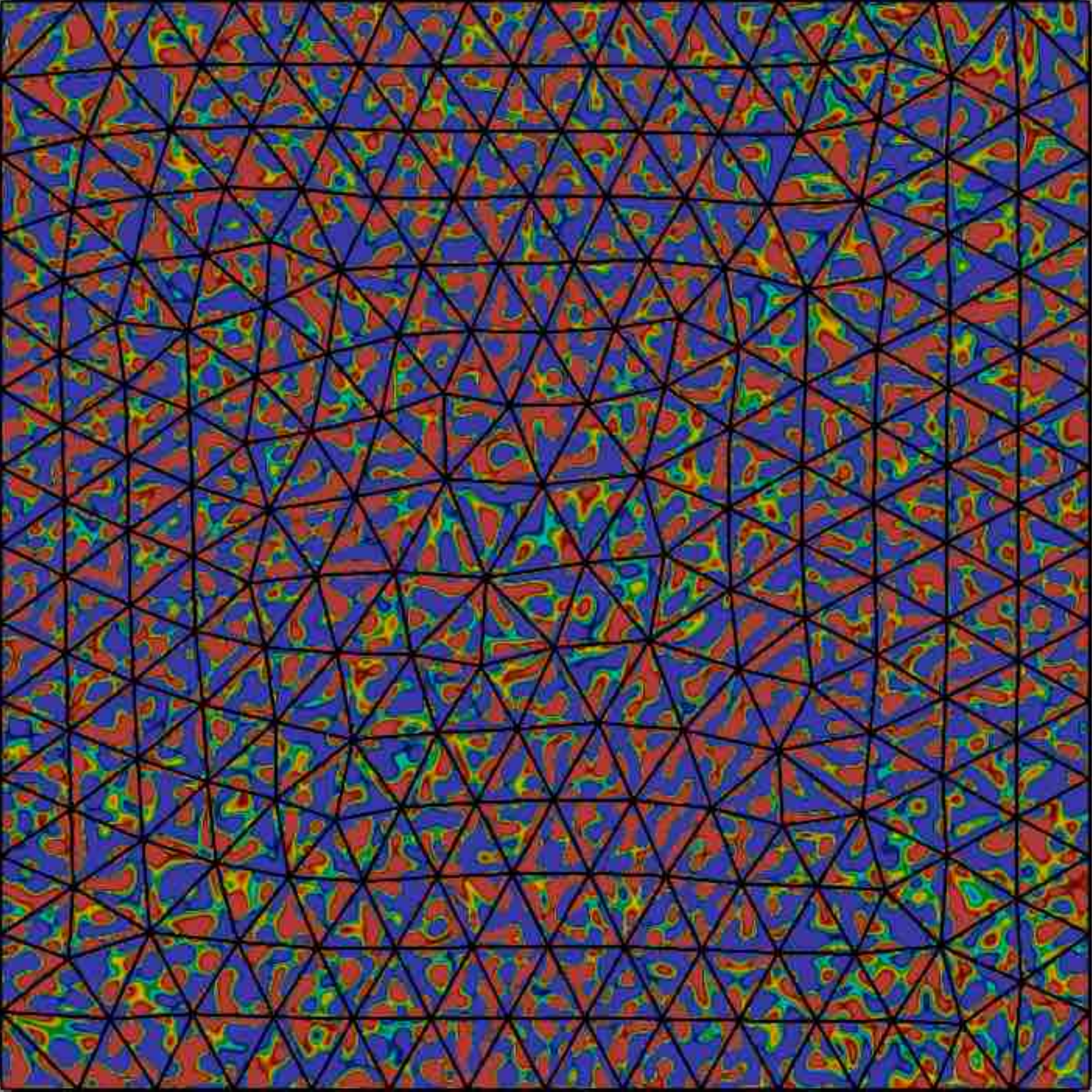} 
			\\ \vspace*{5pt}	
	\includegraphics[width=0.3\textwidth]
		{Gresho/png/pdf/Gresho-vorticity-colour-bar.pdf}				
\caption{Vorticity of moving Gresho vortex simulations at $t=3$. SV$_4$ (left); SV$_8$ (second from left); TH$_4$ (second from right); TH$_8$ (right). The used meshes, corresponding to Table~\ref{tab:DOFs}, can also be seen.}
\label{fig:Moving_Gresho_H1_vorticity}
\end{figure}
%------------

All in all, we claim that this deficiency of non-pressure-robust methods is the main reason why they are significantly inferior to
pressure-robust methods for flow problems with large gradient forces.
Let us end our investigations for the Gresho problem with vorticity plots at $t=\tend=3$ for SV$_k$/TH$_k$ ($k\in\set{4,8}$); cf.\ Figure~\ref{fig:Moving_Gresho_H1_vorticity}.
For SV$_k$, practically no difference to the standing Gresho problem can be observed, while the TH$_k$ results are even worse.
These plots underline our previous statement that pressure-robust methods are much better suited to preserve large-scale structures -- even when the particular problem at hand is only Galilean-invariant to an approximate generalised Beltrami flow, and thus more complicated than an approximate generalised Beltrami flow.

%------------------------------------------------------------------------------------------------
%------------------------------------------------------------------------------------------------
\section{\emph{\textbf{H}}(div)- and \emph{$\textbf{L}$}$^\textbf{2}$-DG finite element methods}
\label{sec:dGFEM}
%------------------------------------------------------------------------------------------------
%------------------------------------------------------------------------------------------------

In order to illustrate the numerical analysis developed in Section \ref{sec:ErrorAnalysisH1}, we will present several numerical studies that compare pressure-robust versus classical, non-pressure-robust space discretisations.
In order to make a fair and convincing comparison we will perform the numerical benchmarks from now on with Discontinuous Galerkin (DG) methods.
The reason for this choice is manifold. 
First and most important, with the software package \texttt{NGSolve} \cite{Schoeberl14}, there exists a versatile, well-established and efficient numerical implementation of plenty of different DG methods, allowing especially for high-order space discretisation.
~\\

We choose to compare an exactly divergence-free, pressure-robust $\HDIV$-conforming DG method with a classical, non-pressure-robust DG methods which is only $\LP{2}{}$-conforming (both discretely inf-sup stable).
In this setting, our second reason for using DG methods is that after choosing elementwise polynomials of order $\ku$ for the velocity, both the $\HDIV$- and the $\LP{2}{}$-DG method work with the same (discontinuous) discrete pressure space of polynomial order $k_p=\ku-1$.
Thus, both methods have a roughly comparable number of degrees of freedom and we think that a comparison between these methods is quite fair.
~\\

Third, we will deal with flows at high Reynolds numbers which makes a certain convection stabilisation desirable and/or necessary.
In the DG context, upwind techniques are well-established and $\HDIV$- and $\LP{2}{}$-conforming DG methods allow to apply exactly the same upwind stabilisation, facilitating a fair comparison. 
In Figure \ref{fig:Lattice_Stokes_comparison} the (moderately) positive effect w.r.t.\ the numerical error of a upwind stabilised versus a centred discretisation of the convection term is illustrated --- which is not at all self-evident for generalised Beltrami flows, by the way. 
~\\

Last but not least, we emphasise that a similar numerical analysis as in the case of $\HH^1$-conforming space discretisations is possible for DG methods as well, but would only involve additional technical problems due to the facet terms required for DG discretisations.
In fact, in the PhD thesis \cite{Schroeder2019} of the third author, an error analysis for $\HDIV$-DG methods can be found.
However, note that the thesis uses the discrete Stokes projection (and not the discrete Helmholtz projection) for the error splitting.
For classical DG methods, on the other hand, error analysis can already be found, for example, in the monographs \cite{Riviere08,PietroErn12}.
In this sense, we do not think that a new DG error analysis would bring any new insights, but will qualitatively look similar to the $\HM{1}{}$ estimates provided in Section~\ref{sec:ErrorAnalysisH1}, with the difference that the involved constants would be quantitatively different.
Instead, in the present work the issue of pressure-robustness in DG methods is described and demonstrated in this section. 
Again, one has to investigate the consistency errors of appropriately defined discrete Helmholtz--Hodge projectors for $\HDIV$- and $\LP{2}{}$-conforming DG methods in the $\LP{2}{}$ norm.
In fact, their behaviour is exactly the same as in the context of $\HH^1$-conforming methods, analysed in Theorems~\ref{thm:pr} and \ref{thm:classical}.

%------------------------------------------------------------------------------------------------
\subsection{DG formulation}
\label{sec:DG-setting}
%------------------------------------------------------------------------------------------------

Beginning with the standard setting in DG methods \cite{Riviere08,PietroErn12}, let $\T$ be a shape-regular FE partition (for brevity, we restrict ourselves to simplicial meshes in this work) of $\Omega$ without hanging nodes and mesh size $h=\max_{K\in\T} h_K$, where $h_K$ denotes the diameter of the particular element $K\in\T$. 
The skeleton $\F$ denotes the set of all facets of $\T$, $\FK=\set{F\in\F\colon F\subset\partial K}$ and $h_F$ represents the diameter of each facet $F\in\F$. 
Moreover, $\F=\Fi\cup\Fb$ where $\Fi$ is the subset of interior facets and $\Fb$ collects all Dirichlet boundary facets $F\subset\partial\Omega$.
Facets lying on a periodic surface of $\partial\Omega$ are treated as interior facets. 
To any $F\in\F$ we assign a unit normal vector $\nn_F$ where, for $F\in\Fb$, this is the outer unit normal vector $\nn$. 
If $F\in\Fi$, there are two adjacent elements $K^+$ and $K^-$ sharing the facet $F=\overline{\partial K^+}\cap\overline{\partial K^-}$ and $\nn_F$ points in an arbitrary but fixed direction. 
Let $\phi$ be any piecewise smooth (scalar-, vector- or matrix-valued) function with traces from within the interior of $K^\pm$ denoted by $\phi^\pm$, respectively. 
Then, we define the jump $\jmp{\cdot}_F$ and average $\avg{\cdot}_F$ operator across interior facets $F\in\Fi$ by
\begin{equation}\label{eq:DefInteriorJumps}
	\jmp{\phi}_F= \phi^+-\phi^-	
	\quad \text{and}\quad
	\avg{\phi}_F=\frac{1}{2}\rb{\phi^+ + \phi^-}.
\end{equation}
For boundary facets $F\in\Fb$ we set $\jmp{\phi}_F=\avg{\phi}_F=\phi$. 
These operators act componentwise for vector- and matrix-valued functions. 
Frequently, the subscript indicating the facet is omitted. 
~\\

Let $\VV_h/\Q_h$ be the considered discretely inf-sup stable velocity/pressure (discontinuous) FE pair.
In order to approximate \eqref{eq:TINS}, the following generic semi-discrete DG method is considered:
\begin{subequations} \label{eq:DG-FEM}
	\begin{empheq}[left=\empheqlbrace]{align} 
	\text{Find }\rb{\uu_h,p_h}&\colon\rsb{0,\tend}\to\VV_h\times\Q_h
		\text{ with }\uu_h\rb{0}=\uu_{0h}
		\text{ s.t., }\forall\,\rb{\vv_h,q_h}\in\VV_h\times\Q_h,\\
	\rb{\partial_t\uu_h,\vv_h}
		&+\nu a_h\rb{\uu_h,\vv_h} 
		+ c_h\rb{\uu_h;\uu_h,\vv_h}
		+ b_h\rb{\vv_h,p_h}
		+ b_h\rb{\uu_h,q_h} \\
		&= \rb{\ff,\vv_h}
			+ \nu a_h^\partial\rb{\gD;\vv_h}
			+ c_h^\partial\rb{\gD;\uu_h,\vv_h}
			+ b_h^\partial\rb{\gD,q_h}.
	\end{empheq} 
\end{subequations}

Here, $\uu_{0h}$ denotes a suitable approximation of the initial velocity $\uu_0$.
In the following, based on \cite{Riviere08,PietroErn12}, we introduce the various terms which appear in \eqref{eq:DG-FEM}.
Note that only the particular choice of the discrete velocity space $\VV_h$ will distinguish the pressure-robust from the non-pressure-robust method in the end.
~\\

Denote the broken gradient by $\nabla_h$.
For the discretisation of the diffusion term, we choose the symmetric interior penalty method with a sufficiently large penalisation parameter $\sigma >0$:
\begin{subequations}
\begin{align}
	a_h\rb{\uu_h,\vv_h} 
		&=	\int_\Omega \nabla_h \uu_h \Fip \nabla_h \vv_h \dx
			+\sum_{F\in\F} \frac{\sigma}{h_F} \int_F \jmp{\uu_h} \ip \jmp{\vv_h} \ds \\
		&\quad -\sum_{F\in\F} \int_F \avg{\nabla_h \uu_h} \nn_F \ip \jmp{\vv_h} \ds
			-\sum_{F\in\F} \int_F \jmp{\uu_h} \ip \avg{\nabla_h \vv_h} \nn_F \ds \\
	a_h^\partial\rb{\gD;\vv_h} 
		&=	\sum_{F\in\Fb} \frac{\sigma}{h_F} \int_F \gD \ip \vv_h \ds
		 	-\sum_{F\in\Fb} \int_F \gD \ip \rb{\nabla_h\vv_h} \nn_F \ds		.		
\end{align}
\end{subequations}

Using the broken divergence $\DIVh$, the pressure-velocity coupling is realised by
\begin{subequations}
\begin{align}
	b_h\rb{\uu_h,q_h} 
		&= -\int_\Omega q_h\rb{\DIVh \uu_h} \dx
		+\sum_{F\in\F}\int_F \rb{\jmp{\uu_h}\ip\nn_F}\avg{q_h} \ds, \\
	b_h^\partial\rb{\gD;q_h}
		&=	\sum_{F\in\Fb}\int_F \rb{\gD\ip\nn}q_h \ds.	
\end{align}	
\end{subequations}

For the nonlinear inertia term, we decide to use the following skew-symmetrised (upwind) discretisation in convection form:
\begin{subequations}
\begin{align}
	c_h\rb{\ww_h;\uu_h,\vv_h}	
		&= 
			\int_\Omega \rb{\ww_h\ip\nabla_h}\uu_h\ip\vv_h \dx
			+ \frac{1}{2}\int_\Omega \rb{\DIVh\ww_h}\uu_h\ip\vv_h \dx \\ 
		&\quad - \sum_{F\in\Fi} \int_F \rb{\avg{\ww_h}\ip\nn_F} \jmp{\uu_h}\ip\avg{\vv_h} \ds 
		- \frac{1}{2}\sum_{F\in\F} \int_F \rb{\jmp{\ww_h}\ip\nn_F} \avg{\uu_h\ip\vv_h} \ds \\ 
		&\quad +\sum_{F\in\Fi} \int_F \frac{\theta}{2} \abs{\avg{\ww_h}\ip\nn_F}\jmp{\uu_h}\ip\jmp{\vv_h} \ds\\
	c_h^\partial\rb{\gD;\uu_h,\vv_h}
		&= -\frac{1}{2}\sum_{F\in\Fb} \int_F \rb{\gD\ip\nn} \rb{\uu_h\ip\vv_h} \ds.
\end{align}	
\end{subequations}

Here, the parameter $\theta\in\set{0,1}$ decides whether the considered method term uses an upwind stabilisation ($\theta=1$) for the convection term or not ($\theta=0$).
~\\

Concerning the particular choice of FE spaces, let $\Pk{k}{\rb{K}}$ denote the local space of all polynomials on $K$ with degree less or equal to $k$.
Then, given $k\geqslant 2$, the pressure space for both the $\HDIV$- and the $\LP{2}{}$-DG method coincides:
\begin{equation}
	\Q_h=
		\set{q_h\in\Lpz{2}{\OMEGA}\colon \restr{q_h}{K}\in\Pk{k-1}{\rb{K}},~\forall\, K\in\T},
\end{equation}
i.e., it holds $k_p=k-1$.
While the discrete pressure spaces for the
$\HDIV$- and $\LP{2}{}$ conforming DG methods are the same,
the velocity spaces, however, differ.
Recalling
\begin{equation*}
\Hdiv = \set{\vv\in\LP{2}{\OMEGA}\colon \DIV\vv\in\Lp{2}{\OMEGA}},
\end{equation*}

the FE velocity spaces are defined as follows:
\begin{subequations}
\begin{align}
	\text{$\HDIV$-DG}\colon \qquad
		\VV_h &= \set{\vv_h\in\Hdiv\colon\restr{\vv_h}{K}\in \PPk{k}{\rb{K}},~\forall\, K\in\T;~\restr{\rb{\vv_h-\gD}\ip\nn}{\partial\Omega}=0} \label{eq:VelSpaceHdiv},\\
	\text{$\LP{2}{}$-DG}\colon \qquad  
		\VV_h &=	\set{\vv_h\in\LP{2}{\OMEGA}\colon\restr{\vv_h}{K}\in \PPk{k}{\rb{K}},~\forall\, K\in\T}.
		\label{eq:VelSpaceL2}
\end{align}	
\end{subequations}

Note that the $\HDIV$ space is thus based on the Brezzi--Douglas--Marini element \cite{BoffiEtAl13} and the resulting divergence-free DG method is strongly related to \cite{CockburnEtAl07}.
Finite element error analysis for the $\LP{2}{}$-DG method can be found, for example, in \cite{Riviere08,PietroErn12}.
For the $\HDIV$-DG method, we refer to \cite{SchroederEtAl18}.
In the DG setting, discretely divergence-free functions are defined using the pressure-velocity coupling $b_h$ by
\begin{align*}
	\VV_h^\dvg 
		= \set{\vv_h\in\VV_h\colon b_h\rb{\vv_h,q_h}=0,~\forall\,q_h\in\Q_h},
\end{align*}

where we note that for the $\HDIV$ method, $\vv_h\in\VV_h^\dvg$ follows $\DIV\vv_h=0$ pointwise.

%--------------------------
\begin{remark}
Deriving an analogous statement to Lemma \ref{lem:conv:nonlinear:term} for DG methods is straightforward, but technically demanding. 
The necessary compactness arguments for DG methods are described in \cite{PietroErn12}.
\end{remark}
%--------------------------

All computations in this work have been done with the high-order finite element library \texttt{NGSolve} \cite{Schoeberl14}. 

%------------------------------------------------------------------------------------------------
\subsection{Discrete DG Helmholtz--Hodge projectors}
%------------------------------------------------------------------------------------------------

Every discretely inf-sup stable numerical method for the incompressible (Navier--)Stokes equations is intrinsically connected to a particular discrete Helmholtz--Hodge projector.
The aim of this subsection is to investigate the properties of the discrete DG Helmholtz--Hodge projectors of the $\HDIV$- and $\LP{2}{}$-DG methods.
An analogous discussion for the discrete $\HM{1}{}$ projectors has been done in Subsection \ref{sec:DiscreteH1Projectors}.
~\\

The general definition of the discrete Helmholtz--Hodge projector $\helm_h$ of a function $\gbld \in\LP{2}{}$ is given by
\begin{equation} \label{eq:DiscDGHelm}
  \helm_h\colon\LP{2}{\OMEGA} \to \VV_h^\dvg, \quad
		\helm_h(\gbld) = \argmin_{\vv_h\in\VV_h^\dvg} \norm{\gbld-\vv_h}_\LP{2}{\OMEGA}.
\end{equation}

A suitable finite element method for this problem uses the already known pressure-velocity coupling form $b_h$ and additionally, a mass bilinear form defined by $m_h\rb{\uu_h,\vv_h}=\int_\Omega \uu_h\ip\vv_h \dx$.
The discrete weak form reads as follows:
\begin{subequations} \label{eq:DiscreteHelmholtzLeray}
	\begin{empheq}[left=\empheqlbrace]{align} 
	&\text{Find }\rb{\HLh{\gbld},\phi_h}\in\VV_h\times\Q_h
		\text{ s.t., }\forall\,\rb{\vv_h,q_h}\in\VV_h\times\Q_h,\\
		&m_h\rb{\HLh{\gbld},\vv_h} 
		+ b_h\rb{\vv_h,\phi_h}
		+ b_h\rb{\HLh{\gbld},q_h} 
		= \rb{\gbld,\vv_h}.
	\end{empheq} 
\end{subequations}

Now, choosing the discrete `velocity' space according to the $\HDIV$-DG choice \eqref{eq:VelSpaceHdiv} leads to an exactly divergence-free discrete Helmholtz--Hodge projector called $\HLhd{\gbld}$.
On the other hand, choosing $\VV_h$ according to the $\LP{2}{}$-DG choice \eqref{eq:VelSpaceL2}, the resulting discrete Helmholtz--Hodge projector is denoted by $\HLhdc{\gbld}$.
Note that while $\DIV\HLhd{\gbld}=0$, the $\LP{2}{}$-DG projector $\HLhdc{\gbld}$ is not divergence-free.
~\\

Let us now quantify the difference between these two discrete DG Helmholtz projectors more carefully.
For the divergence-free $\HDIV$-DG method, the analogue of Lemma
\ref{lem:DiscHelmDivFreeH1} is the following.
%-----------------
\begin{lemma}
For the pressure-robust (divergence-free) $\HDIV$-DG method, for all gradient fields $\nabla \psi$
with $\psi \in H^1\OMEGA$ it holds
\begin{equation*}
	\helm_h(\nabla \psi) = \zero.	
\end{equation*}
\end{lemma}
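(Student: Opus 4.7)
The plan is to mimic the strategy of Lemma \ref{lem:DiscHelmDivFreeH1}, but adapted to the DG setting: once it is established that $\VV_h^\dvg \subset \LPOne{2}_\sigma(\Omega)$ for the $\HDIV$-DG choice \eqref{eq:VelSpaceHdiv}, the conclusion becomes immediate from the defining orthogonality \eqref{eq:def:lp:sigma} together with the fact that $\helm_h(\nabla\psi)$ is the $\LPOne{2}$ projection of $\nabla\psi$ onto $\VV_h^\dvg$.

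Accordingly, the first and main step is to characterise discretely divergence-free functions in the $\HDIV$-conforming setting. Since $\vv_h \in \HDIV$ by construction, normal components are continuous across interior facets, i.e., $\jmp{\vv_h}\ip\nn_F = 0$ on every $F \in \Fi$; in the homogeneous Helmholtz--Hodge setting ($\gD=\zero$), additionally $\vv_h\ip\nn = 0$ on $\partial\Omega$. Hence all facet contributions in $b_h$ disappear and
\begin{equation*}
	b_h(\vv_h,q_h) = -\int_\Omega q_h (\DIVh \vv_h) \dx, \qquad \forall\, q_h \in \Q_h.
\end{equation*}
Because $\restr{\DIVh \vv_h}{K} \in \Pk{k-1}{(K)} \subset \restr{\Q_h}{K}$, testing with $q_h = \DIVh \vv_h$ forces $\DIVh \vv_h \equiv 0$ pointwise. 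Combining pointwise element-wise divergence-freeness with the continuous normal trace across $\Fi$ and the vanishing normal trace on $\Fb$, a standard integration-by-parts argument yields $-(\vv_h,\nabla\phi) = 0$ for every $\phi\in H^1(\Omega)$, which is precisely the statement $\vv_h \in \LPOne{2}_\sigma(\Omega)$.

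With this inclusion in hand, the proof concludes in one line: for every $\psi\in H^1(\Omega)$ and every $\vv_h\in\VV_h^\dvg$, the definition \eqref{eq:def:lp:sigma} gives $(\nabla\psi,\vv_h)=0$, so that $\zero$ satisfies the $\LPOne{2}$-orthogonality characterisation of the minimiser in \eqref{eq:DiscDGHelm}. By uniqueness, $\helm_h(\nabla\psi)=\zero$.

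I do not expect any serious obstacle here. The only subtle point is to verify carefully that the $\HDIV$-conformity together with the imposed normal boundary condition really upgrades the element-wise divergence-freeness (obtained from the discrete inf-sup pairing against $\Q_h$) to genuine distributional divergence-freeness on all of $\Omega$. This is the whole reason the $\HDIV$-DG method is pressure-robust by design, and once stated correctly it is essentially a bookkeeping exercise with jump and average operators.
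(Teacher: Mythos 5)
Your proposal is correct and follows essentially the same route as the paper: the paper's (much terser) proof likewise notes that $\vv_h\in\VV_h^\dvg$ implies $\DIV\vv_h=0$ pointwise for the $\HDIV$-DG method and then concludes via $(\nabla\psi,\vv_h)=-(\psi,\DIV\vv_h)=0$. You merely spell out the details the paper leaves implicit (vanishing of the facet terms in $b_h$ by normal continuity, testing with $q_h=\DIVh\vv_h$, and the vanishing normal trace on $\partial\Omega$ needed for the integration by parts), which is a faithful expansion rather than a different argument.
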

%-----------------

%-----------------
\begin{proof}
In this case, due to
$\vv_h \in \VV_h^\dvg$ it follows $\DIV\vv_h=0$ pointwise.
Thus, for $\nabla \psi \in \LP{2}{\OMEGA}$ it holds for all $\vv_h \in \VV_h^\dvg$,
\begin{equation*}
	(\nabla \psi, \vv_h) = -(\psi, \DIV \vv_h) = 0.	
\end{equation*}
\end{proof}
%-----------------

For the non-pressure-robust $\LP{2}{}$-DG method, the analogue of Lemma \ref{lem:DiscHelmLerayNonDivFreeH1} is again less favourable.

%-----------------
\begin{lemma} \label{lem:DiscHelmLerayNonDivFreeDG}%
For the non-pressure-robust $\LP{2}{}$-DG method, for all gradient fields $\nabla \psi$ with $\psi \in \Hm{k_p+1}{\OMEGA}$, it holds
\begin{equation}
	\norm{\helm_h(\nabla \psi)}_\LP{2}{\OMEGA}
		\leqslant  C h^{k_p} |\psi|_\Hm{k_p+1}{\OMEGA}.
\end{equation}
\end{lemma}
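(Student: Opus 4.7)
The plan is to mimic the argument for Lemma \ref{lem:DiscHelmLerayNonDivFreeH1} while accounting for the DG facet contributions, exploiting that the Lagrange interpolant $L_h\psi$ (see \eqref{eq:lagrange:press:interpol}) is continuous and therefore has vanishing interior facet jumps. This makes the DG pressure-velocity form $b_h(\cdot,L_h\psi)$ compatible with the elementwise-integrated piecewise gradient of $L_h\psi$, which is precisely the identity we need.

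First, by definition \eqref{eq:DiscDGHelm}, $\helm_h(\nabla\psi)\in\VV_h^\dvg$ is the $\LP{2}{}$-orthogonal projection of $\nabla\psi$ onto $\VV_h^\dvg$, so the orthogonality relation yields
\begin{equation*}
	\norm{\helm_h(\nabla\psi)}_\LP{2}{\OMEGA}^2
		= \rb{\nabla\psi,\helm_h(\nabla\psi)}.
\end{equation*}
The goal is thus to bound $(\nabla\psi,\vv_h)$ for arbitrary $\vv_h\in\VV_h^\dvg$.

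Second, the key step is to establish the discrete analogue of integration by parts for the Lagrange interpolant, namely
\begin{equation*}
	\rb{\nabla L_h\psi,\vv_h}
		= -\int_\Omega L_h\psi\,\DIVh\vv_h \dx
		+ \sum_{F\in\F}\int_F \avg{L_h\psi}\rb{\jmp{\vv_h}\ip\nn_F}\ds
		= b_h(\vv_h,L_h\psi),
\end{equation*}
where I would sum elementwise integration by parts over $K\in\T$ and use $\jmp{L_h\psi}=0$ on interior facets (so that $\avg{L_h\psi}=L_h\psi$ there) together with the DG product-rule for interior facet contributions. After subtracting a mean-value constant so that $L_h\psi\in\Q_h\cap H^1\OMEGA$ (which does not alter $\nabla L_h\psi$), this identity together with $\vv_h\in\VV_h^\dvg$ gives $(\nabla L_h\psi,\vv_h)=b_h(\vv_h,L_h\psi)=0$.

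Finally, subtracting and applying Cauchy--Schwarz gives
\begin{equation*}
	\rb{\nabla\psi,\vv_h}
		= \rb{\nabla\sqb{\psi-L_h\psi},\vv_h}
		\leqslant \norm{\nabla\sqb{\psi-L_h\psi}}_\LP{2}{\OMEGA}\norm{\vv_h}_\LP{2}{\OMEGA},
\end{equation*}
and plugging in $\vv_h=\helm_h(\nabla\psi)$ together with the interpolation estimate from Remark \ref{rem:h1:press:approx} concludes the proof. The only technical obstacle compared to the $\HM{1}{}$-conforming situation of Lemma \ref{lem:DiscHelmLerayNonDivFreeH1} is the verification of the identity $(\nabla L_h\psi,\vv_h)=b_h(\vv_h,L_h\psi)$, since $\vv_h$ now has nonzero facet jumps; the continuity of $L_h\psi$ is exactly what neutralises these jumps and makes the argument go through unchanged.
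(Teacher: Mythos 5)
Your proposal is correct and follows essentially the same route as the paper: elementwise integration by parts combined with the continuity of $L_h\psi$ shows $(\nabla L_h\psi,\vv_h)=b_h(\vv_h,L_h\psi)=0$ for all $\vv_h\in\VV_h^\dvg$, after which the bound follows from Cauchy--Schwarz and the interpolation estimate of Remark \ref{rem:h1:press:approx}. The only differences are presentational — you spell out the $\LP{2}{}$-orthogonality of the projector and the mean-value normalisation of $L_h\psi$ explicitly, which the paper leaves implicit.
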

%-----------------

%-----------------
\begin{proof}
For all $q \in H^1\OMEGA$ it holds for all $\vv_h\in\VV_h^\dvg$
\begin{equation*}
	\rb{\nabla q,\vv_h}
		= -\int_\Omega q\rb{\DIVh \vv_h} \dx
	+\sum_{F\in\F}\int_F q \rb{\jmp{\vv_h}\ip\nn_F} \ds\\
		= b_h\rb{\vv_h,q}.
\end{equation*}

For $L_h \psi \in H^1\OMEGA \cap Q_h$ it holds thus $(\nabla (L_h \psi), \vv_h) = 0$ due to \eqref{eq:DG-FEM}, leading to
\begin{equation*}
  (\nabla \psi, \vv_h) = (\nabla (\psi - L_h \psi), \vv_h)
   \leqslant  \norm{\nabla (\psi - L_h \psi)}_{\LP{2}{}} \norm{\vv_h}_{\LP{2}{}}	
\end{equation*}

and the result is proved like in the $\HH^1$-conforming case in Lemma \ref{lem:DiscHelmLerayNonDivFreeH1}.
\end{proof}
%-----------------

%------------------------------------------------------------------------------------------------
%------------------------------------------------------------------------------------------------
\section{Numerical experiments with known exact solution}
\label{sec:ExperimentsKnownSol}
%------------------------------------------------------------------------------------------------
%------------------------------------------------------------------------------------------------

In this section, we mainly use the previously described $\HDIV$- and $\LP{2}{}$-DG methods to investigate incompressible flow problems at high Reynolds number where the exact solution is known to be a (generalised) Beltrami flow.
The accuracy of the corresponding results is measured in the $\LP{2}{}$ norm exclusively because the errors in $\HM{1}{}$ do not provide any more insight.
More precisely, this means that the errors in the $\HM{1}{}$ norm qualitatively show exactly the same behaviour as the ones measured in the $\LP{2}{}$ norm.
Furthermore, for the 2D lattice flow problem, a brief comparison of pressure-robust and non-pressure-robust $\HM{1}{}$-conforming methods, analogous to Section~\ref{sec:GreshoH1} can be found as well.

%------------------------------------------------------------------------------------------------
\subsection{2D planar lattice flow} 
\label{sec:2DLatticeFlow}
%------------------------------------------------------------------------------------------------

Let us now compare the performance of the pressure-robust $\HDIV$- and non-pressure-robust $\LP{2}{}$-DG methods in terms of accuracy and efficiency in a two-dimensional setting.
In order to do so, we fix $\nu=\num{e-5}$ and solve a problem with $\ff\equiv\zero$, where the exact solution is known and given by
\begin{equation} \label{eq:LatticeExact}
	\uu_0\rb{\xx}=\begin{bmatrix}
		\sin\rb{2\pi x_1}\sin\rb{2\pi x_2}\\
		\cos\rb{2\pi x_1}\cos\rb{2\pi x_2}	
	\end{bmatrix},\quad
	\uu\rb{t,\xx} = \uu_0\rb{\xx}e^{-8\pi^2 \nu t }.
\end{equation}	

We consider the domain $\Omega=\rb{0,1}^2$ with periodic boundary conditions on all edges of $\partial\Omega$ and compute until $\tend=10$.
The exact solution \eqref{eq:LatticeExact} is a classical example of a generalised Beltrami flow as the convective term $\rb{\uu\ip\nabla}\uu$ balances the pressure gradient.
For the simulation, only the initial velocity is prescribed according to $\uu_0$ and the evolution of the flow is observed.
Varying orders $k\in\set{2,3,4,6}$ of the FE spaces with $\ku=k$ and $k_p=k-1$ are used and the resolution of the particular method is controlled via the mesh.
We use unstructured triangular meshes for this 2D example and the SIP penalty parameter $\sigma=\rb{k+1}\rb{k+2}$ is chosen, which has the correct quadratic $k$-dependency; see, for example, \cite[Section 3.1]{Hillewaert13}.
~\\

The time-stepping is based on the second-order multistep implicit-explicit (IMEX) scheme SBDF2 \cite{AscherEtAl95} where the Stokes part of the problem is discretised implicitly with a BDF2 method and the convection part relies on an explicit treatment with second-order accurate extrapolation in time.
The system matrix of the Stokes part is called $M^\ast$ and note that in such an IMEX scheme, only linear system associated with $M^\ast$ have to be solved in every time step.
However, this particular time-stepping scheme is only a choice here and not crucial for the subsequent results.
For the planar-lattice flow we use a constant time step of $\Delta t=\num{e-4}$.
~\\

We will compare the following two different quantities: total (velocity plus pressure) number of degrees of freedom (DOFs) and number of non-zero entries (NZEs) of $M^\ast$. 
While the DOFs indicate how rich the approximation space is, the NZEs are a more suitable measure of how efficient a particular discretisation is.
Indeed, the NZEs of $M^\ast$ indicate how expensive solving linear systems is; usually, this is the most time consuming part of a flow solver (especially in 3D).
In Figure \ref{fig:Lattice_errors}, the corresponding $\Lp{2}{\rb{0,\tend;\LP{2}{\OMEGA}}}$ errors can be seen for the two different methods introduced in Section \ref{sec:dGFEM}.
~\\

%------------
\begin{figure}[h]
\centering
	\includegraphics[width=0.35\textwidth]{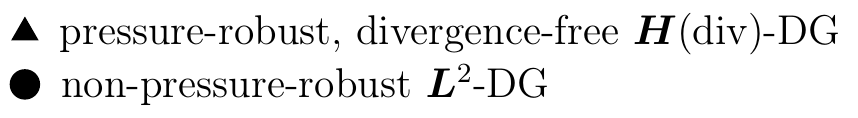} \\
	\includegraphics[width=0.48\textwidth]{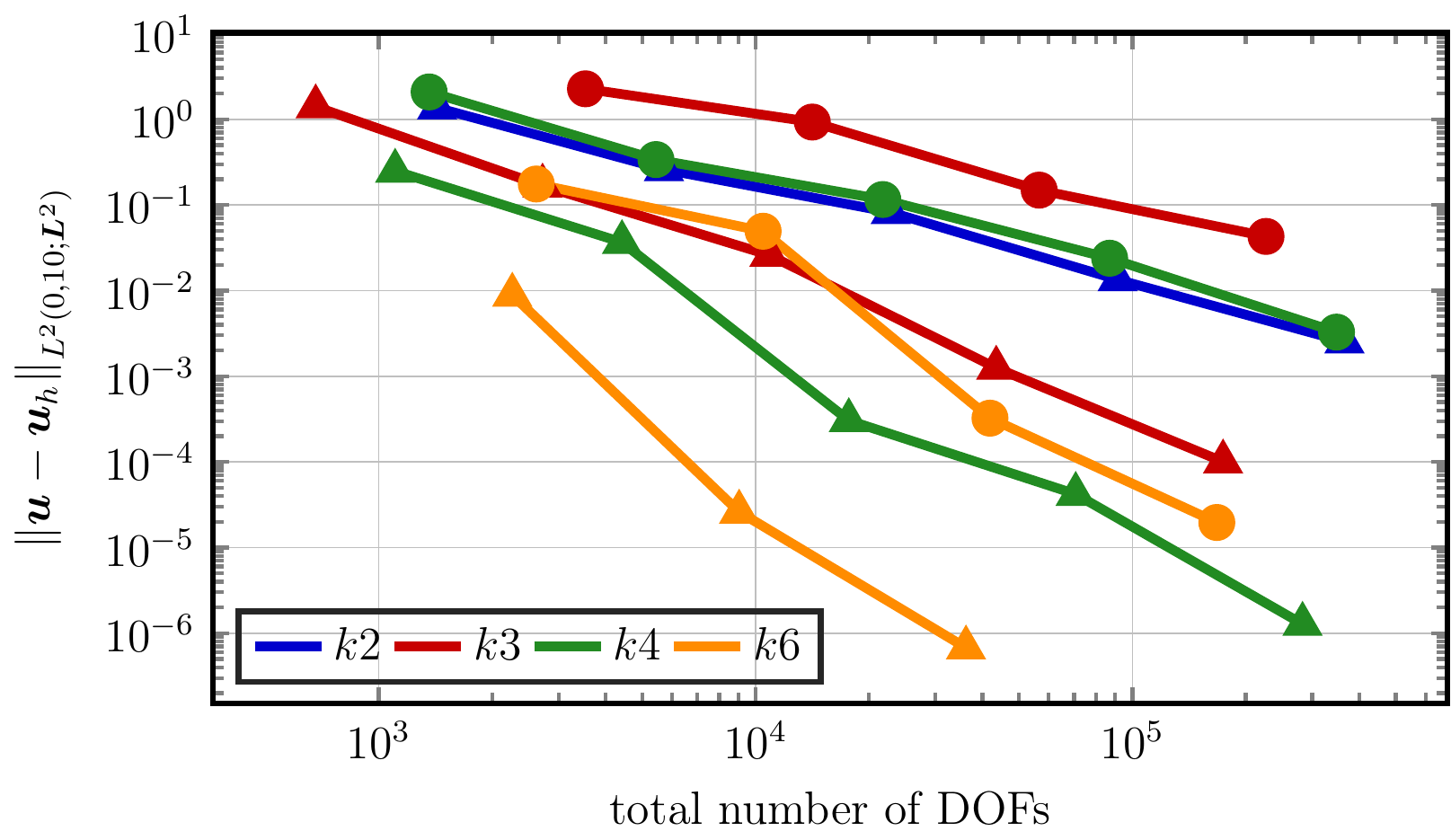} \hspace{5pt}
	\includegraphics[width=0.48\textwidth]{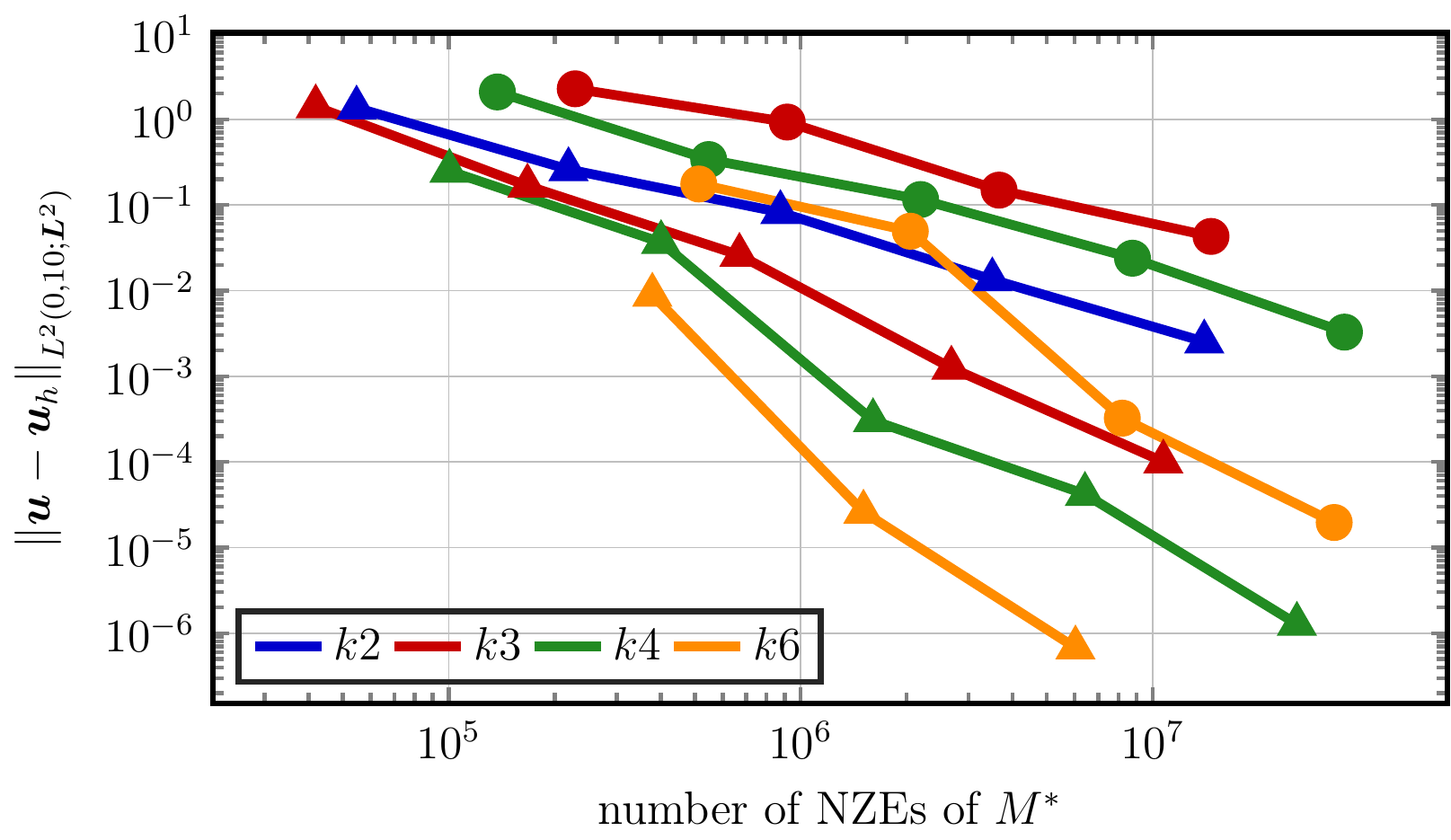} 
\caption{$\Lp{2}{\rb{0,\tend;\LP{2}{}}}$, $\tend=10$, errors for the 2D lattice flow ($\nu=\num{e-5}$). Comparison of pressure-robust $\HDIV$- and non-pressure-robust $\LP{2}{}$-DG methods with $\Delta t=\num{e-4}$, both using upwinding ($\theta=1$). The abscissae show the total number of DOFs (left) and number of NZEs of $M^\ast$ (right). }
\label{fig:Lattice_errors}
\end{figure}
%------------

Firstly, one can observe that whenever a fixed polynomial degree of the FE space is considered, the pressure-robust $\HDIV$-FEM always leads to an at least ten times smaller error, both in terms of DOFs and NZEs.
For higher-order and on finer meshes, this offset increases to such an extent that for $k=6$ the $\HDIV$-DG method's solution has an error which is at least \num{e-3} times smaller than the corresponding $\LP{2}{}$-DG's.
Even more remarkably, in terms of fixing DOFs, at least on coarse meshes the pressure-robust $k=2$ $\HDIV$-DG method results in a comparable accuracy as the $k=4$ $\LP{2}{}$-DG method while, at the same time, it leads to fewer NZEs. 
A similar observation holds for $k=3$ $\HDIV$-DG and $k=6$ $\LP{2}{}$-DG.
In practice, as higher-order methods usually lead to more NZEs, being able to use a method of order $k$ instead of $2k$, without loosing accuracy, means a considerable improvement with respect to performance.
~\\

In Section \ref{sec:GenBeltramiFlows}, it was argued that the velocity solution $\uu$ of a generalised Beltrami flow is simultaneously a solution of the Navier--Stokes and the Stokes problem (only the pressures are different).
We now want to consider, in a time-dependent setting, instead of the nonlinear Navier--Stokes problem with $\ff\equiv\zero$, the corresponding Stokes problem with $\ff=-\rb{\uu\ip\nabla}\uu$, solved with the  pressure-robust $\HDIV$-DG method for $k=6$.
~\\

The evolution of $\LP{2}{}$ errors can be seen in Figure \ref{fig:Lattice_Stokes_comparison}, where the left-hand side plot shows the behaviour of the Navier--Stokes solution with upwinding (solid lines) against the corresponding Stokes solution (dashed lines) for different refinement levels $r0,\dots,r3$.
The main observations are that the Stokes error is a lower bound for the Navier--Stokes error at all times and that on sufficiently fine meshes, both errors are not too far apart (at least for short times).
A second interesting issue is the question of the influence of upwinding on the solution.
The right-hand side subfigure Figure \ref{fig:Lattice_Stokes_comparison} shows that upwinding is indeed helpful for this kind of convection dominated problems.
More precisely, one can observe that the Navier--Stokes errors without upwinding (dashed lines) are always larger than with upwinding (solid lines).
~\\

%------------
\begin{figure}[h]
\centering
	\includegraphics[width=0.48\textwidth]{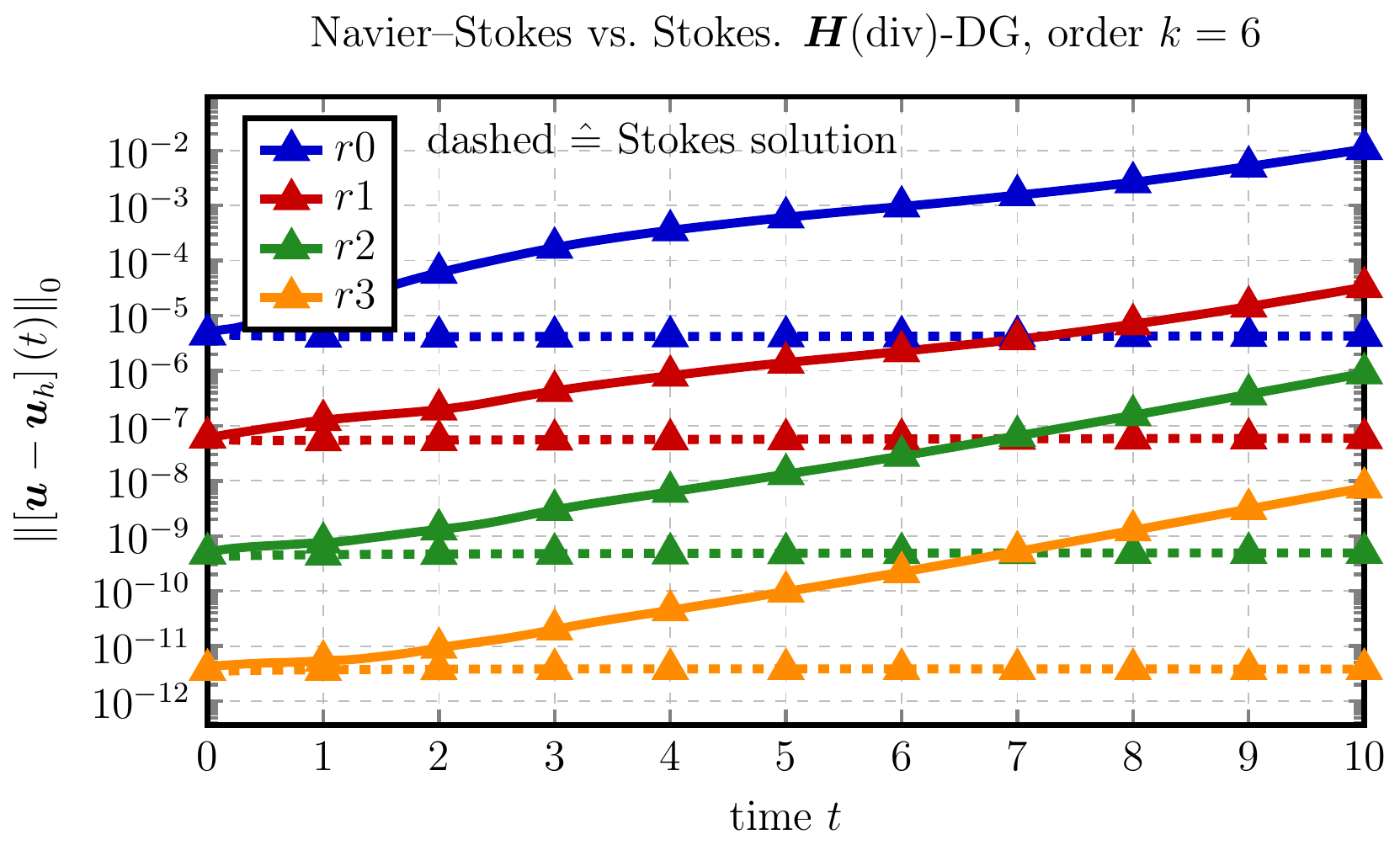} \hspace{5pt}
	\includegraphics[width=0.48\textwidth]{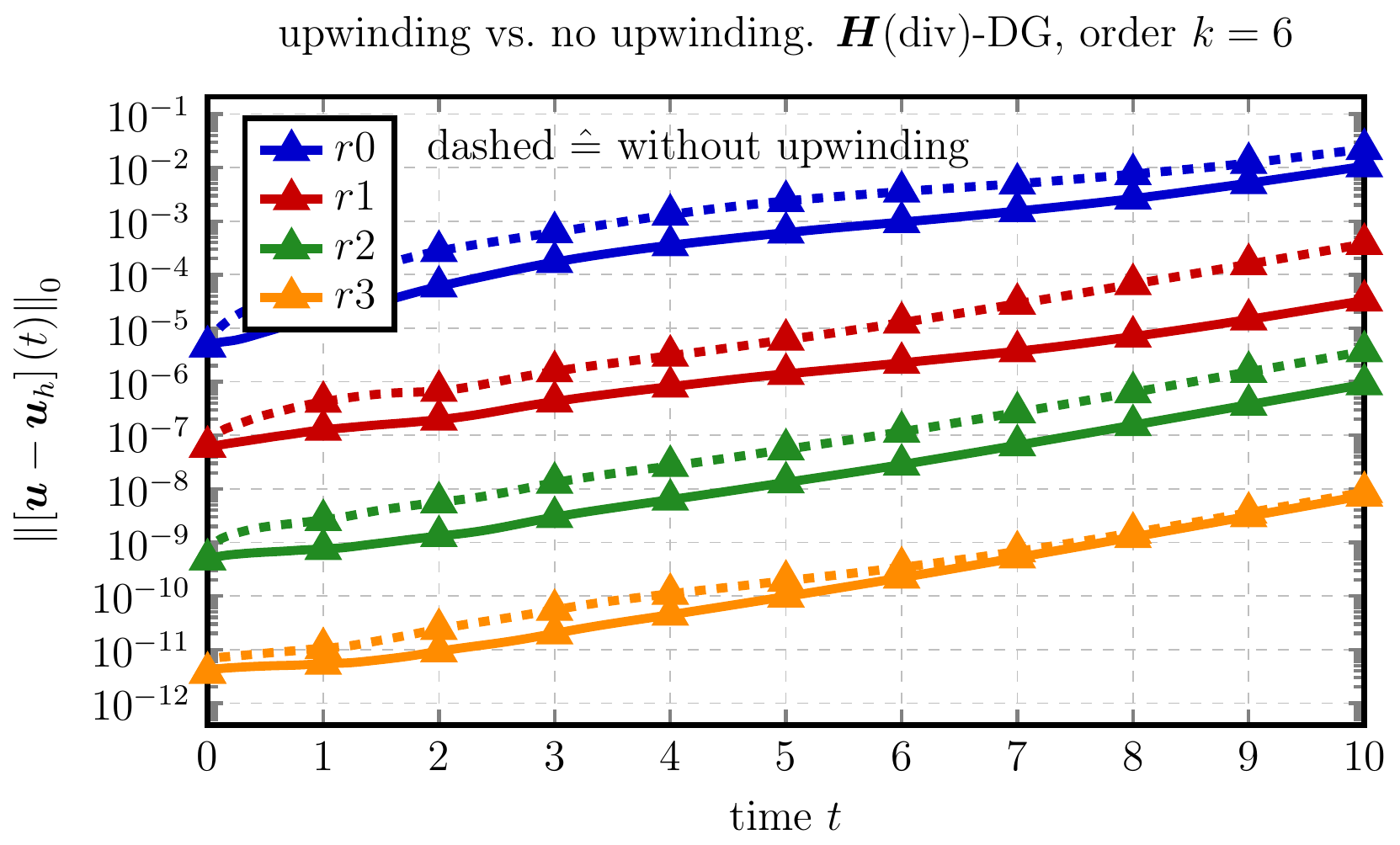} 
\caption{Evolution of $\LP{2}{}$ errors for the 2D lattice flow ($\nu=\num{e-5}$). Comparison of pressure-robust $\HDIV$ solution ($k=6$) with $\Delta t=\num{e-4}$ of the Navier--Stokes problem with upwinding (solid lines) and the Stokes problem (dashed lines) on the left-hand side. The colours indicate different levels of mesh refinement $r$t. Right-hand side: Navier--Stokes solution with upwinding $\theta=1$ (solid), and without upwinding $\theta=0$ (dashed).  }
\label{fig:Lattice_Stokes_comparison}
\end{figure}
%------------

Lastly, let us consider what happens if $\HM{1}{}$-conforming FEM are used instead of the (much more flexible) DG methods.
In order to do so, we repeat the experiment presented in Figure~\ref{fig:Lattice_errors} now for the pressure-robust, divergence-free Scott--Vogelius method SV$_k$ and the non-pressure-robust Taylor--Hood element TH$_k$; cf.\ also the corresponding explanations for the Gresho problem in Section~\ref{sec:GreshoH1}.
~\\

Figure~\ref{fig:Lattice_errors_H1} shows the results of this test scenario.
The left-hand side plot shows the $\Lp{2}{\rb{0,\tend;\LP{2}{}}}$ errors for $\tend=10$ for both SV and TH simulations for different polynomial orders $k\in\set{4,8,12}$.
One can observe that the Taylor--Hood method is not stable for this kind of flow problem as the solutions become unstable and long-time predictions seem to be impossible with a non-pressure-robust method.
The Scott--Vogelius method, on the other hand, shows the expected convergence behaviour also for longer time integration.
In order to also show that our results are consistent for short-time simulations, the right-hand side plot shows the $\Lp{2}{\rb{0,\tend;\LP{2}{}}}$ errors for $\tend=2$.
Here, one can see that the Taylor--Hood method is still stable and gives convergent results under mesh-refinement.
Interestingly, one can see that the TH$_8$ method yields comparable results as the SV$_4$ method for the same amount of DOFs.
This halving of approximation order has already been shown previously in the context of DG methods.
In this sense, we demonstrated that the question whether $\HM{1}{}$-conforming or DG methods are used is not essential but the important distinction has to be made with respect to whether a method is pressure-robust or not.
Consequently, we will not show any more numerical results with $\HM{1}{}$-FEM but only use DG methods henceforth.
~\\

%------------
\begin{figure}[h]
\centering
	\includegraphics[width=0.35\textwidth]{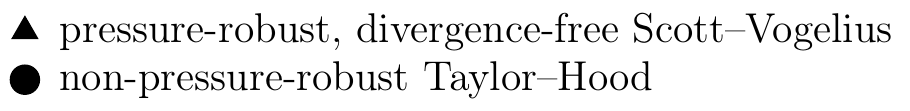} \\
	\includegraphics[width=0.48\textwidth]{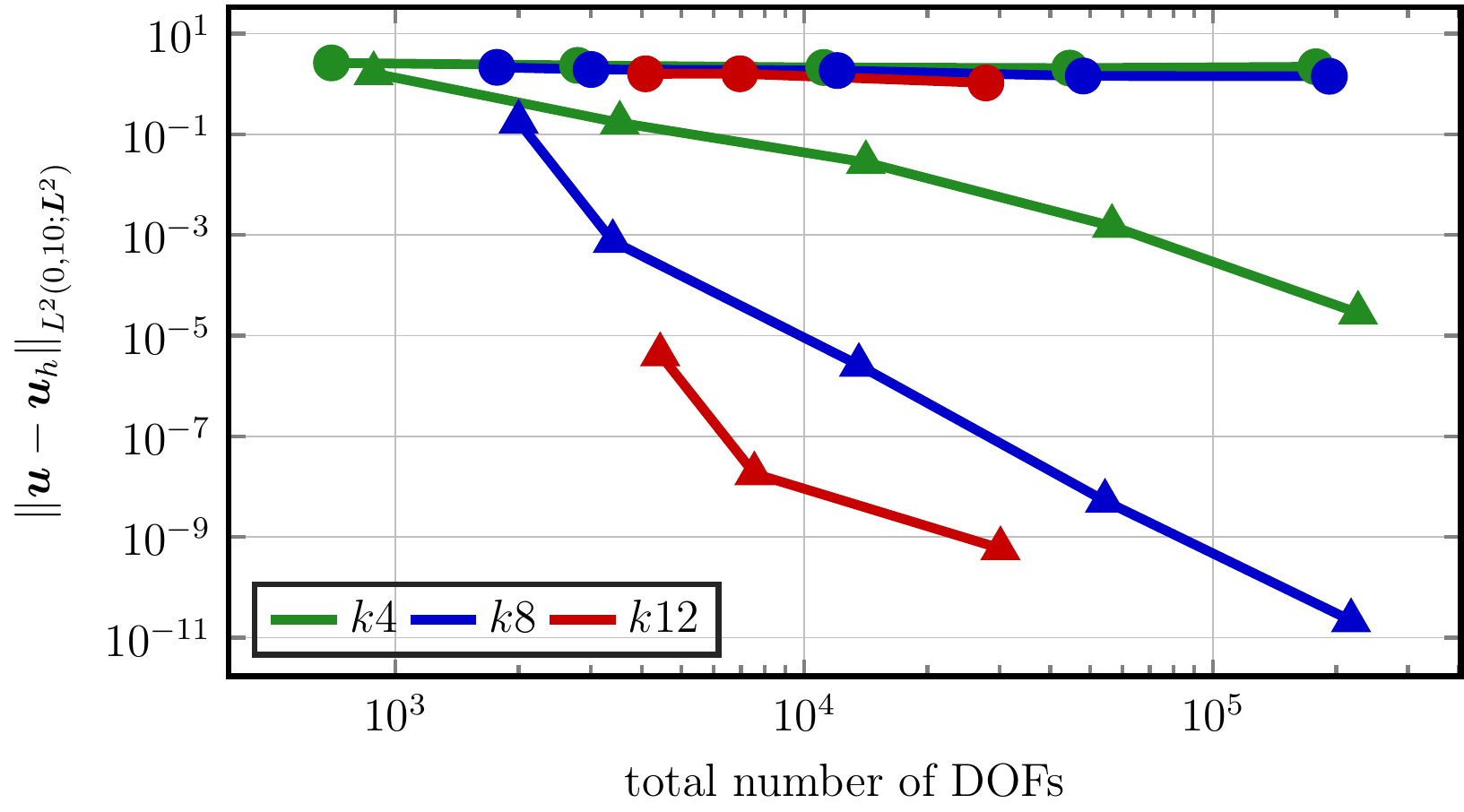} \hspace{5pt}
	\includegraphics[width=0.48\textwidth]{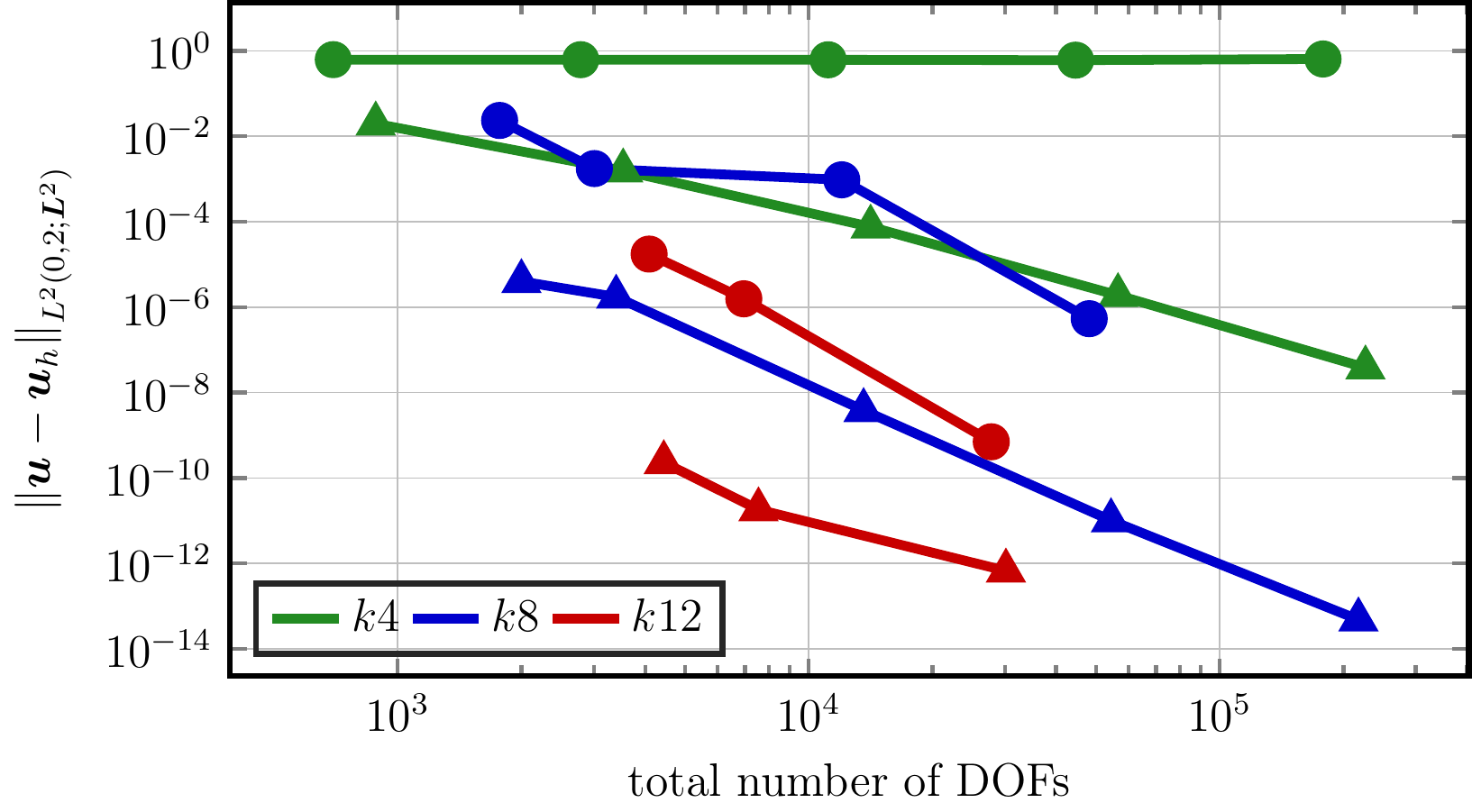} 
\caption{$\HM{1}{}$-FEM: $\Lp{2}{\rb{0,\tend;\LP{2}{}}}$, errors for the 2D lattice flow ($\nu=\num{e-5}$). Comparison of pressure-robust SV- and non-pressure-robust TH methods with $\Delta t=\num{e-4}$. Long-time simulation with $\tend=10$ (left) and short-time simulation with $\tend=2$ (right). }
\label{fig:Lattice_errors_H1}
\end{figure}
%------------

%------------------------------------------------------------------------------------------------
\subsection{Classical 3D Ethier--Steinman} 
\label{sec:3DEthierSteinman}
%------------------------------------------------------------------------------------------------

Now, we want to repeat our investigation from Section \ref{sec:2DLatticeFlow} with a three-dimensional flow problem ($\ff\equiv\zero$).
Thus, consider the exact velocity of the Ethier--Steinman problem \cite{EthierSteinman94}:
\begin{equation} \label{eq:3DES-exact}
	\uu_0\rb{\xx}= -a\begin{bmatrix}
		 e^{ax_1}\sin\rb{ax_2 + bx_3} + e^{ax_3}\cos\rb{ax_1 + bx_2}  \\
		 e^{ax_2}\sin\rb{ax_3 + bx_1} + e^{ax_1}\cos\rb{ax_2 + bx_3}  \\	
		 e^{ax_3}\sin\rb{ax_1 + bx_2} + e^{ax_2}\cos\rb{ax_3 + bx_1}  
	\end{bmatrix},\quad
	\uu\rb{t,\xx} = \uu_0\rb{\xx}e^{-\nu d^2 t }.
\end{equation}	

Here, the parameters $a=\nf{\pi}{4}$, $b=\nf{\pi}{2}$ and $\nu=0.002$ are used and we are dealing with a Beltrami flow.
As domain, the cube $\Omega=\rb{-1,1}^3$ is chosen where the time-dependent Dirichlet boundary condition $\gD$ according to the exact solution is imposed.
Again, the initial velocity $\uu_0$ is prescribed at $t=0$ and the evolution of the flow is observed.
We stop the simulations at $\tend=1$ and the time-stepping is again based on the SBDF2 method, again using $\Delta t=\num{e-4}$.
Unstructured tetrahedral meshes are used in the 3D example and the SIP penalty parameter $\sigma=8 k^2$ is chosen.
The resulting $\Lp{2}{\rb{0,\tend;\LP{2}{\OMEGA}}}$ errors can be seen in Figure \ref{fig:ES_errors}.
~\\

%------------
\begin{figure}[h]
\centering
	\includegraphics[width=0.35\textwidth]{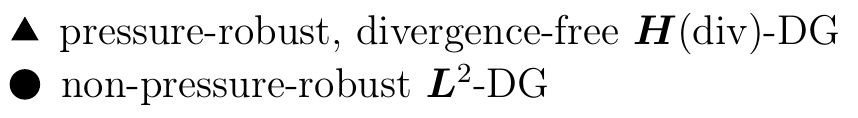} \\
	\includegraphics[width=0.48\textwidth]{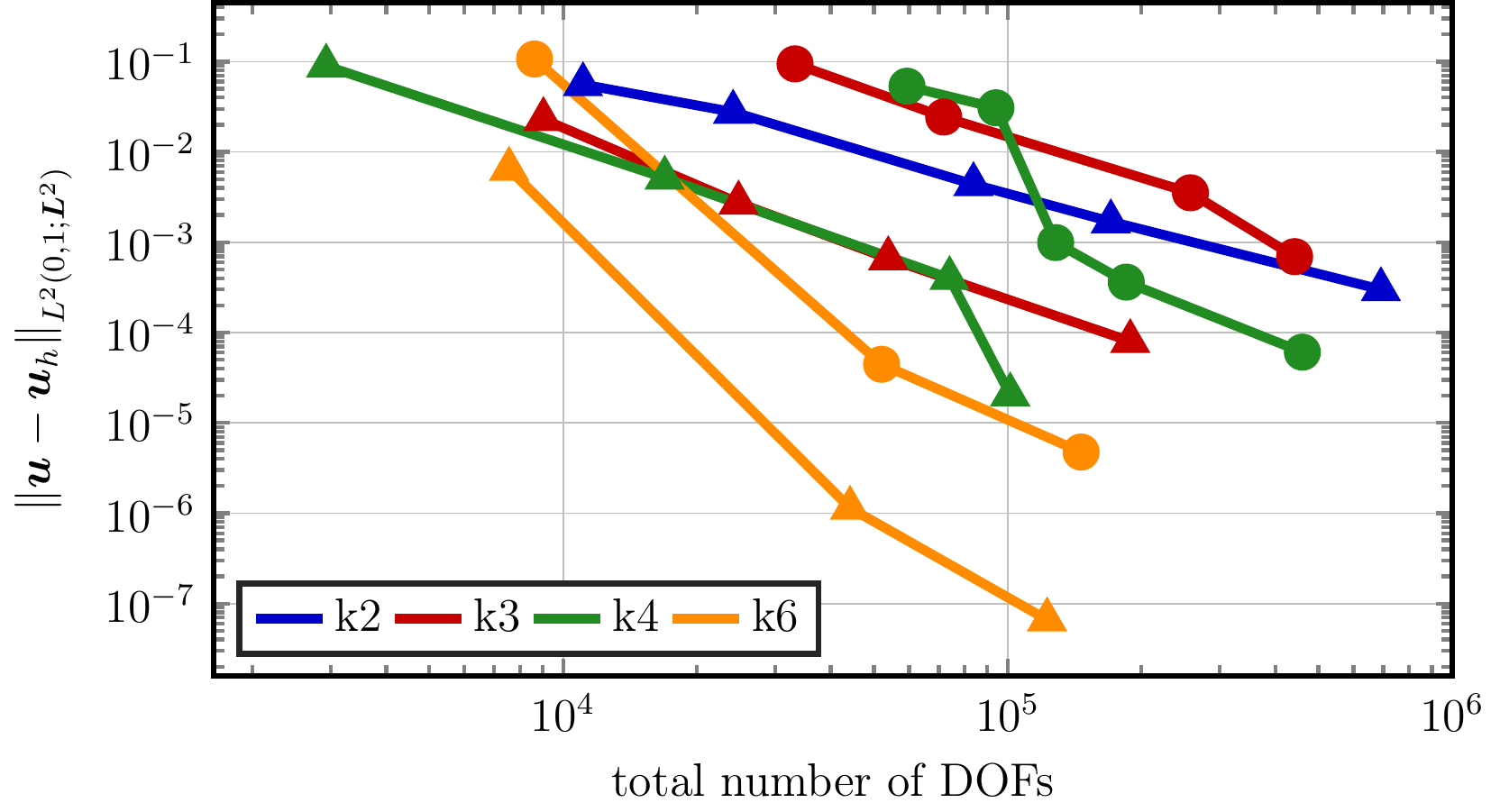} \hspace{5pt}
	\includegraphics[width=0.48\textwidth]{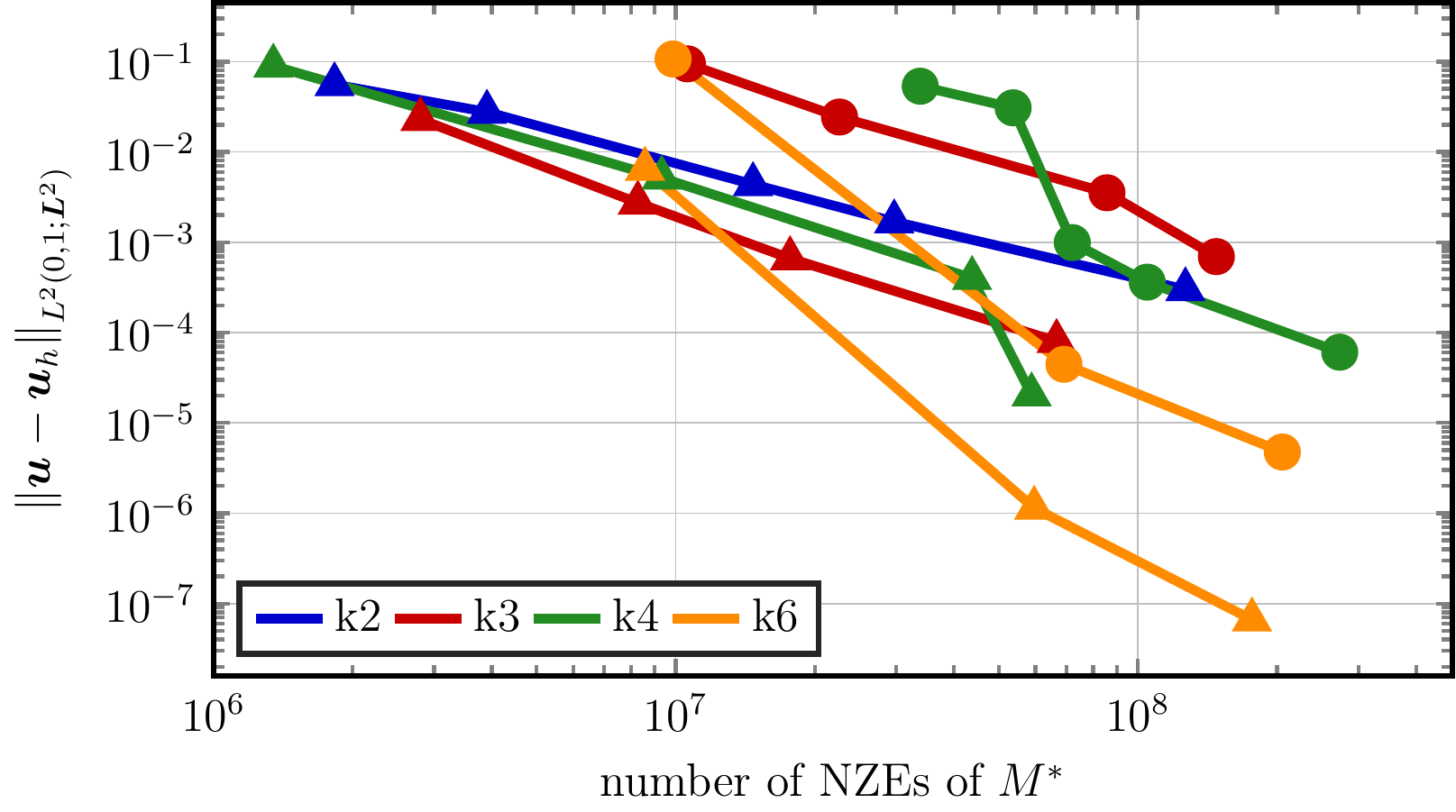}
\caption{Errors for the classical 3D Ethier--Steinman problem ($\nu=0.002$). Comparison of pressure-robust $\HDIV$- and non-pressure-robust $\LP{2}{}$-DG methods with $\Delta t=\num{e-4}$, both using upwinding ($\theta=1$). The abscissae show the total number of DOFs (left) and number of NZEs of $M^\ast$ (right). }
\label{fig:ES_errors}
\end{figure}
%------------

Again, for each fixed $k$, the divergence-free and pressure-robust $\HDIV$-DG method always yields the smallest errors.
In terms of efficiency, the plot of the number of NZEs again shows that by using a pressure-robust method, a significant amount of computational effort can be saved.
However, for sufficiently fine meshes, higher-order will always be superior because of the smoothness of the problem.
The next section shall pick up at exactly this point.

%------------------------------------------------------------------------------------------------
\subsection{3D Ethier--Steinman with inaccurate Dirichlet BCs} 
\label{sec:3DEthierSteinmanBC}
%------------------------------------------------------------------------------------------------

In applications, it is very rare that one has an exact geometry description of the underlying domain as, for example, curved boundaries make it necessary to approximate  also the geometry to a certain accuracy.
Therefore, the imposition of the \emph{correct} Dirichlet BCs is usually made on the \emph{approximated} boundary.
Unavoidably, this a source for errors and in this section we want to mimic such a situation in the following equivalent way.
Instead of prescribing the correct Dirichlet BCs on the approximated boundary, we will impose inaccurate BCs on the correct boundary.
~\\

%------------
\begin{figure}[h]
\centering
	\includegraphics[width=0.39\textwidth]{Ethier-Steinman/tikz/ES-legend.pdf} \\
	\includegraphics[width=0.48\textwidth]{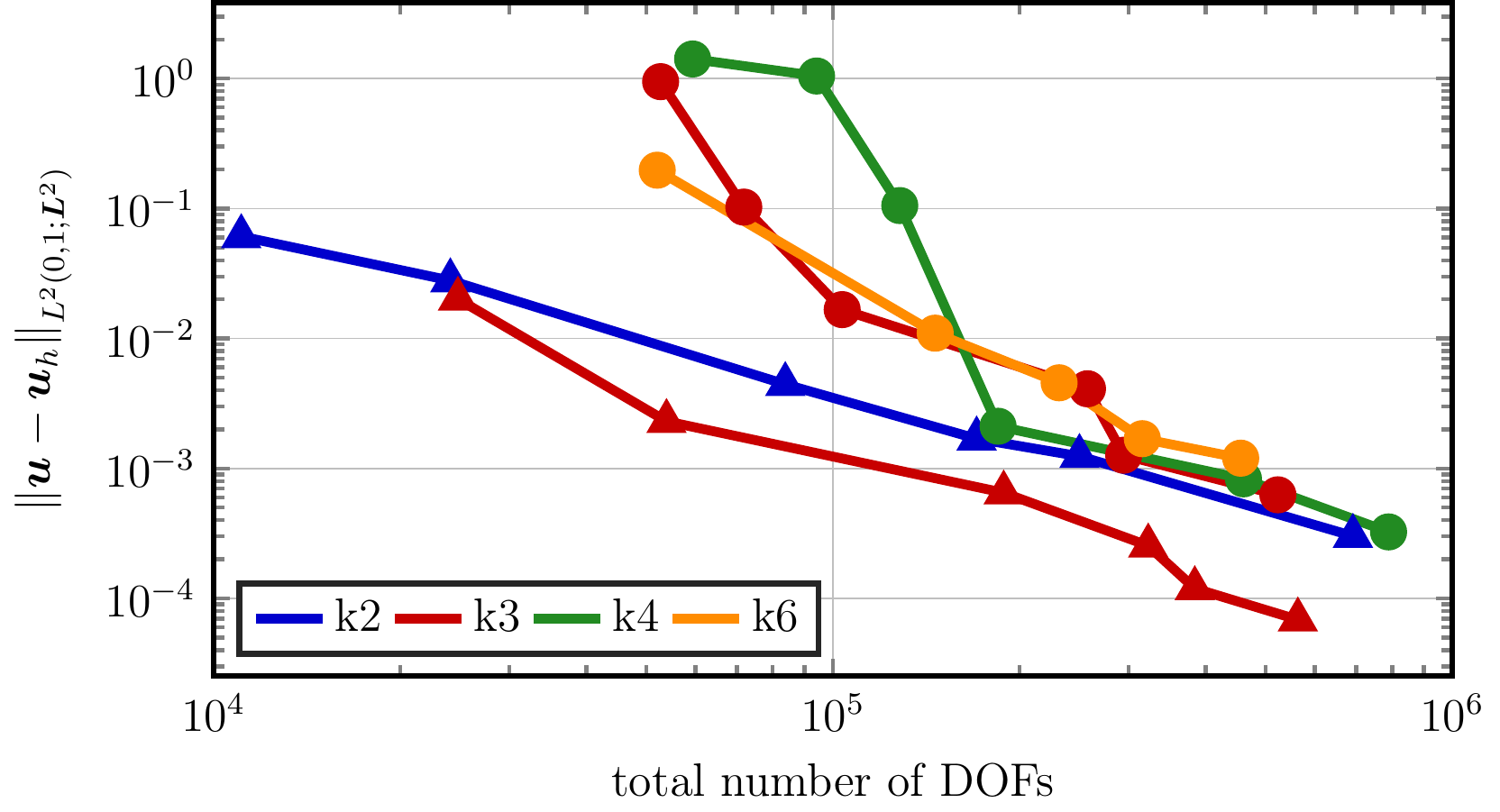} \hspace{5pt}
	\includegraphics[width=0.48\textwidth]{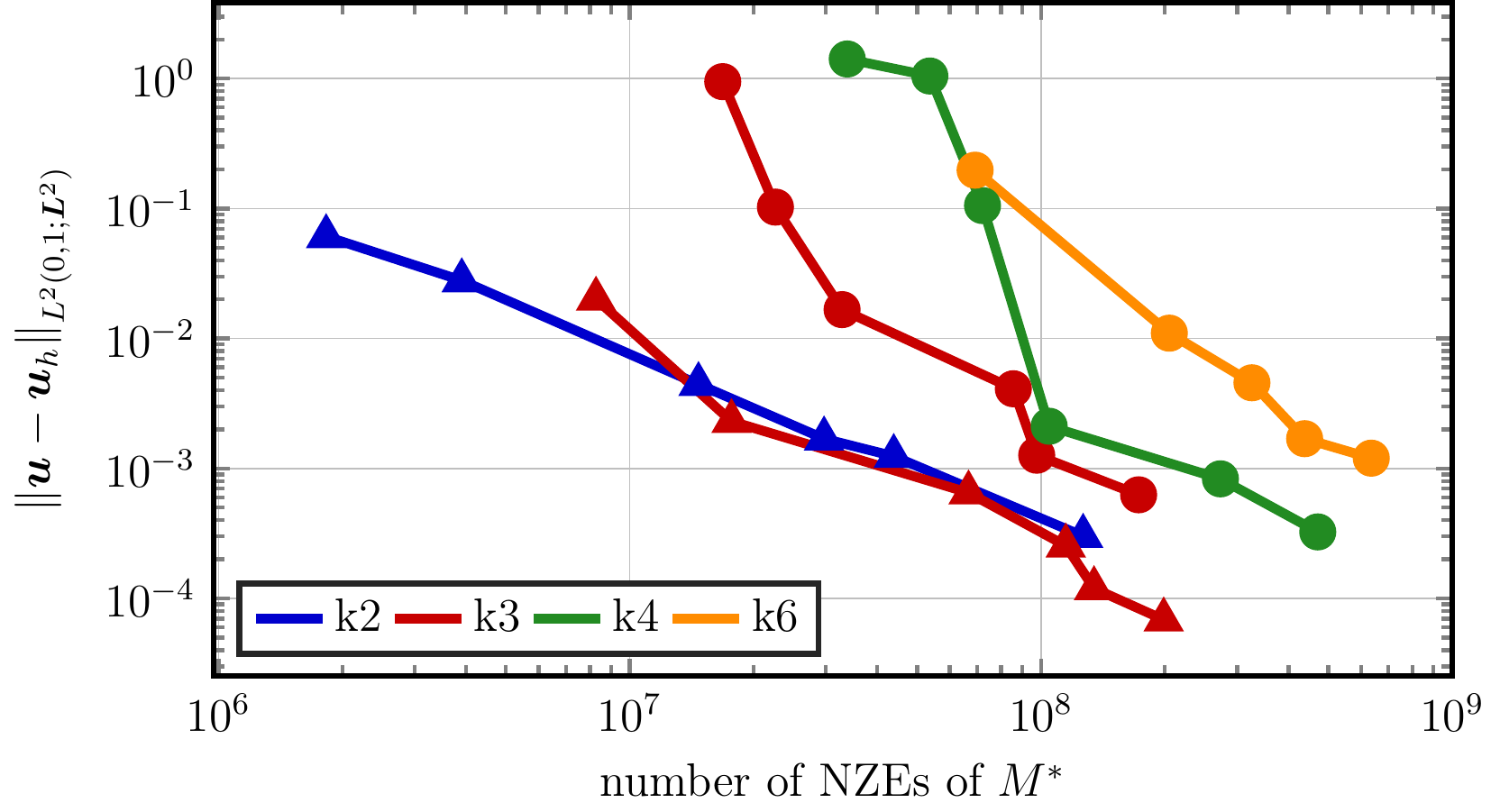} 
\caption{Errors for the 3D Ethier--Steinman problem ($\nu=0.002$) with inaccurate BCs. Comparison of pressure-robust $\HDIV$- and non-pressure-robust $\LP{2}{}$-DG methods with $\Delta t=\num{e-4}$, both using upwinding ($\theta=1$). The abscissae show the total number of DOFs (left) and number of NZEs of $M^\ast$ (right). }
\label{fig:ES_errors_BC}
\end{figure}
%------------

Let us revisit the 3D Ethier--Steinman example from Section \ref{sec:3DEthierSteinman}.
The exact solution is also given by \eqref{eq:3DES-exact}, exactly the same parameters are used and again, time-dependent Dirichlet boundary conditions are imposed.
However, as hinted at above, instead of choosing $\gD$ according to the exact solution $\uu$, we will use a piecewise quadratic approximation of it instead.
Figure \ref{fig:ES_errors_BC} shows the resulting $\Lp{2}{\rb{0,\tend;\LP{2}{\OMEGA}}}$ errors.
~\\

First of all, note that the pressure-robust method is again always more accurate for a fixed number of DOFs and a fixed number of NZEs.
In contrast to Section \ref{sec:3DEthierSteinman}, the asymptotical behaviour for higher $k$ and finer meshes is now, by construction, dominated by the accuracy of $\gD$.
Thus, one can see that all methods roughly lead to the same result when the resolution is high enough.
More interestingly though, on coarse meshes, the pressure-robust method is always significantly more accurate.

%------------------------------------------------------------------------------------------------
%------------------------------------------------------------------------------------------------
\section{Material derivative of a K\'arm\'an vortex street}
\label{sec:KarmanBeltrami}
%------------------------------------------------------------------------------------------------
%------------------------------------------------------------------------------------------------

Above it was shown that for generalised Beltrami flows, and flows which are Galilean-invariant to a generalised Beltrami flow, pressure-robust mixed methods can be much more accurate than classical mixed methods. 
As a main reason, strong and complicated gradient fields in the Navier--Stokes momentum balance have been identified, which explain the superior accuracy of pressure-robust methods.
However, a natural question is whether pressure-robust mixed methods are also superior for `real world flows'. 
Therefore, it is interesting to investigate whether strong gradient fields will appear also in such a situation. 
Indeed, Subsection \ref{sec:EulerFlows} suggests that the material derivative of transient high Reynolds number flows is approximately a non-trivial gradient field. 
In the following, we will confirm this conjecture for a periodic K\'arm\'an vortex street.
~\\

As an example of a practically relevant flow, let us consider the flow around an obstacle in a 2D channel of dimensions $\rb{L,H}=(3,1.01)$.
Here, the obstacle is chosen as a circle with radius $r=0.1$ whose centre is placed at $\xx=\rb{0.4,0.5}^\dag$.
In Figure~\ref{fig:Karman_VelMag}, such a flow can be seen at a time instance where the characteristic vortex shedding of a periodic K\'arm\'an vortex street has formed.
Here, all flow computations are performed with the divergence-free and pressure-robust $\HDIV$-DG with order $k$.
The Dirichlet inflow BC on the left part of the boundary is given by the constant free flow $\uu\rb{t,0,x_2}=\rb{1,0}^\dag$, which, together with $\nu=\num{e-3}$ leads to a Reynolds number $\Rey=\overline{u}r/\nu=1\times0.1\times 1000=100$.
No-slip is prescribed on the boundary of the circle, whereas top, bottom and right part of the boundary represent outflow boundaries (do-nothing).
Computations are performed up to $\tend=10$ and all subsequent snapshots and norms are taken at exactly this instance in time.
\\

%------------
\begin{figure}[h]
\centering
\includegraphics[width=0.475\textwidth]{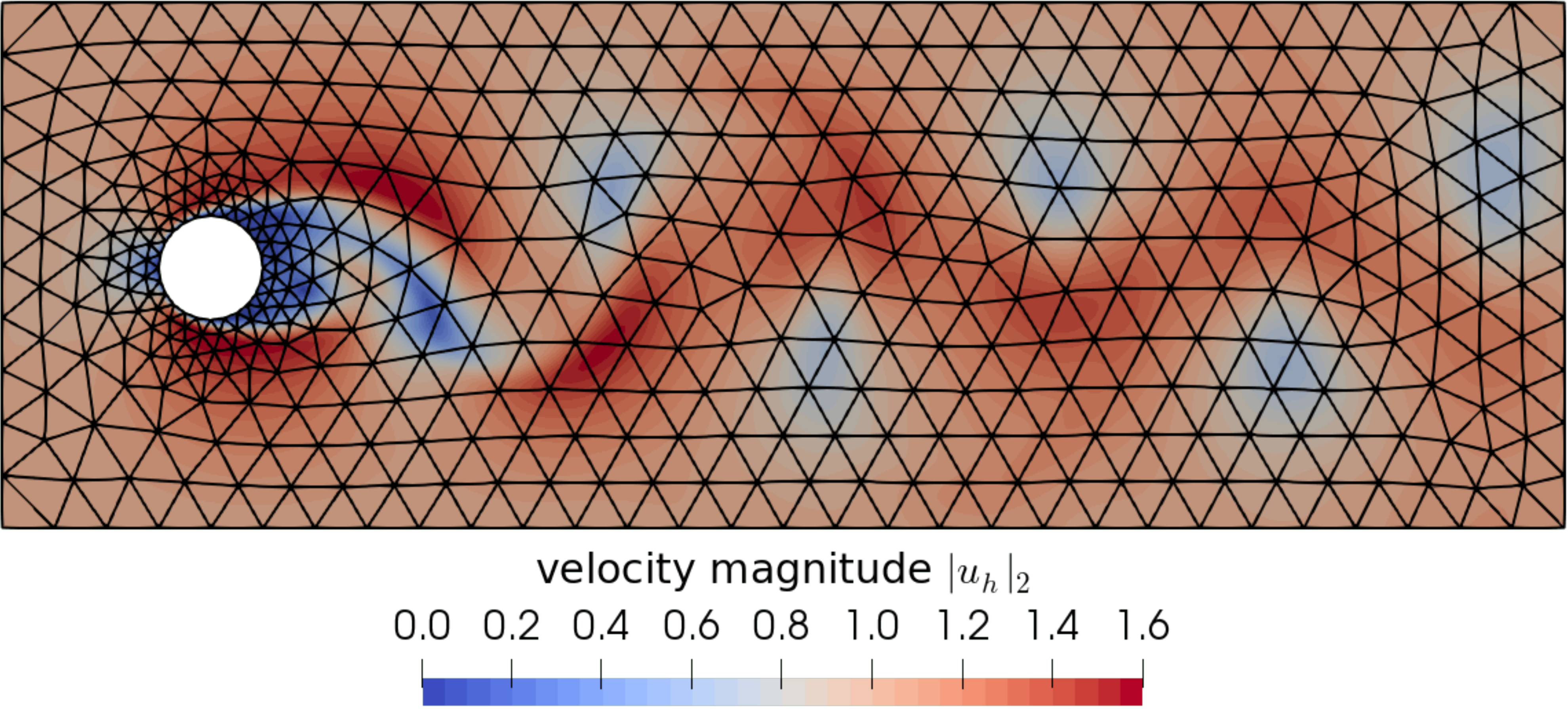} \hspace{5pt}
\includegraphics[width=0.475\textwidth]{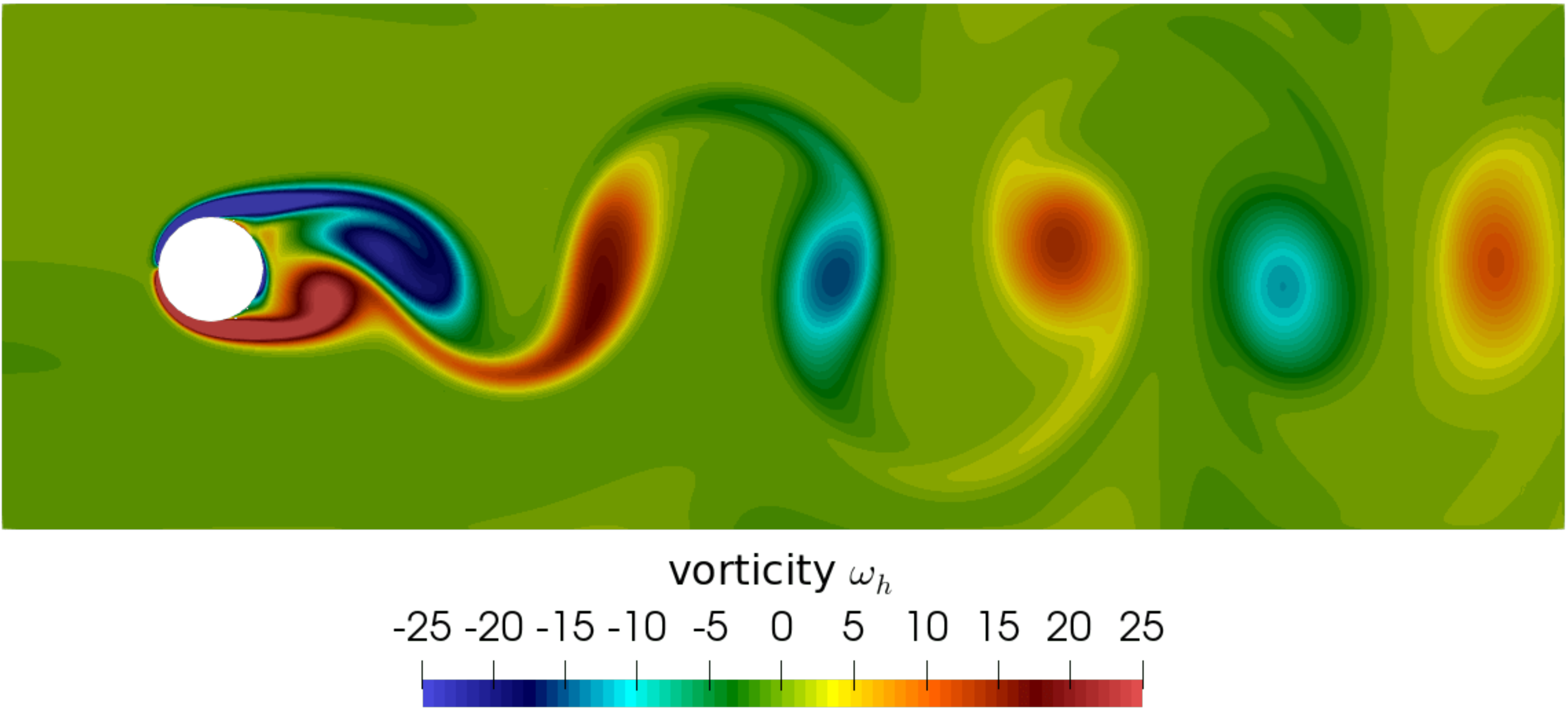}
\caption{Periodic K\'arm\'an vortex shedding in the wake of a square. Visualisation of the underlying geometry, velocity magnitude $\abs{\uu_h}_2$ and computational mesh (left), and vorticity $\omega_h$ (right).  The computations are done with eight-order elements ($k=8$) and upwinding ($\theta=1$), unless stated otherwise.}
\label{fig:Karman_VelMag}
\end{figure}
%------------

Being in this situation and having such a flow at hand, two main questions arise:
\begin{enumerate}
	\item In which part of the domain is the Helmholtz--Hodge projector of the material derivative small, as predicted by Subsection \ref{sec:EulerFlows}?
	\item Where does a pressure-robust method locally outperform a non-pressure robust one?
\end{enumerate}

These questions will be answered in the following two subsections.

%------------------------------------------------------------------------------------------------
\subsection{Investigation of the material derivative}
%------------------------------------------------------------------------------------------------

The most obvious approach for answering the first question is to begin with inspecting the (discrete) material derivative $\ff_h^t=\partial_t\uu_h+\rb{\uu_h\ip\nabla_h}\uu_h$ in a suitable norm.
Note that this investigation is based on the fact that with $k=8$ and this Reynolds numbers, all essential flows features are resolved.
In this way, statements about $\uu_h$ should also holds for $\uu$. 
We have chosen to investigate it in the $\LP{3/2}{}$ norm, since even for 3D flows at least the nonlinear convection term $\rb{\uu_h\ip\nabla_h}\uu_h$ is (for almost all times in the sense of Bochner spaces) in $\LP{3/2}{}$.
Qualitatively, however, the same insights can be obtained by, for example, considering the $\LP{2}{}$ norm.
~\\

%------------
\begin{figure}[h]
\centering
\includegraphics[width=0.55\textwidth]{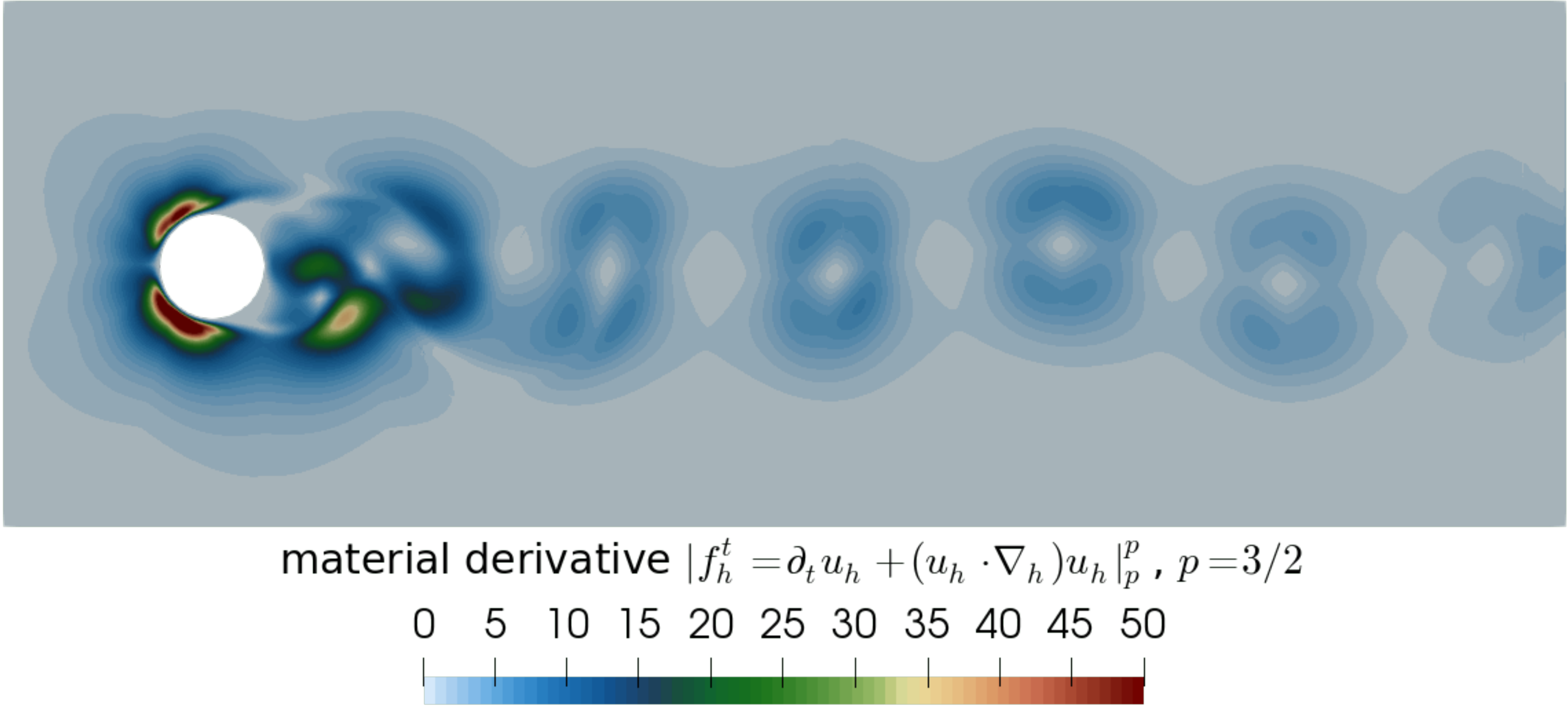}
\caption{Discrete material derivative $\abs{\ff_h^t=\partial_t\uu_h+\rb{\uu_h\ip\nabla_h}\uu_h}_\nf{3}{2}^\nf{3}{2}$. Note that the colour bar is chosen in such a way that all values above 50 are displayed red. }
\label{fig:Karman_ffh}
\end{figure}
%------------

Figure~\ref{fig:Karman_ffh} shows such a qualitative approach, where we observe that in a large part of the domain the material derivative (approximately) vanishes.
There, the material derivative is indeed a (trivial) gradient field. 
However, pressure-robust methods will only be superior to non-pressure-robust methods in parts of the domain, where the material derivative is a non-trivial and strong gradient field.
However, there are also some regions in the flow where the material derivative itself is large (basically the direct vicinity of the obstacle and parts of the wake).
~\\

%------------
\begin{figure}[h]
\centering
\includegraphics[width=0.55\textwidth]{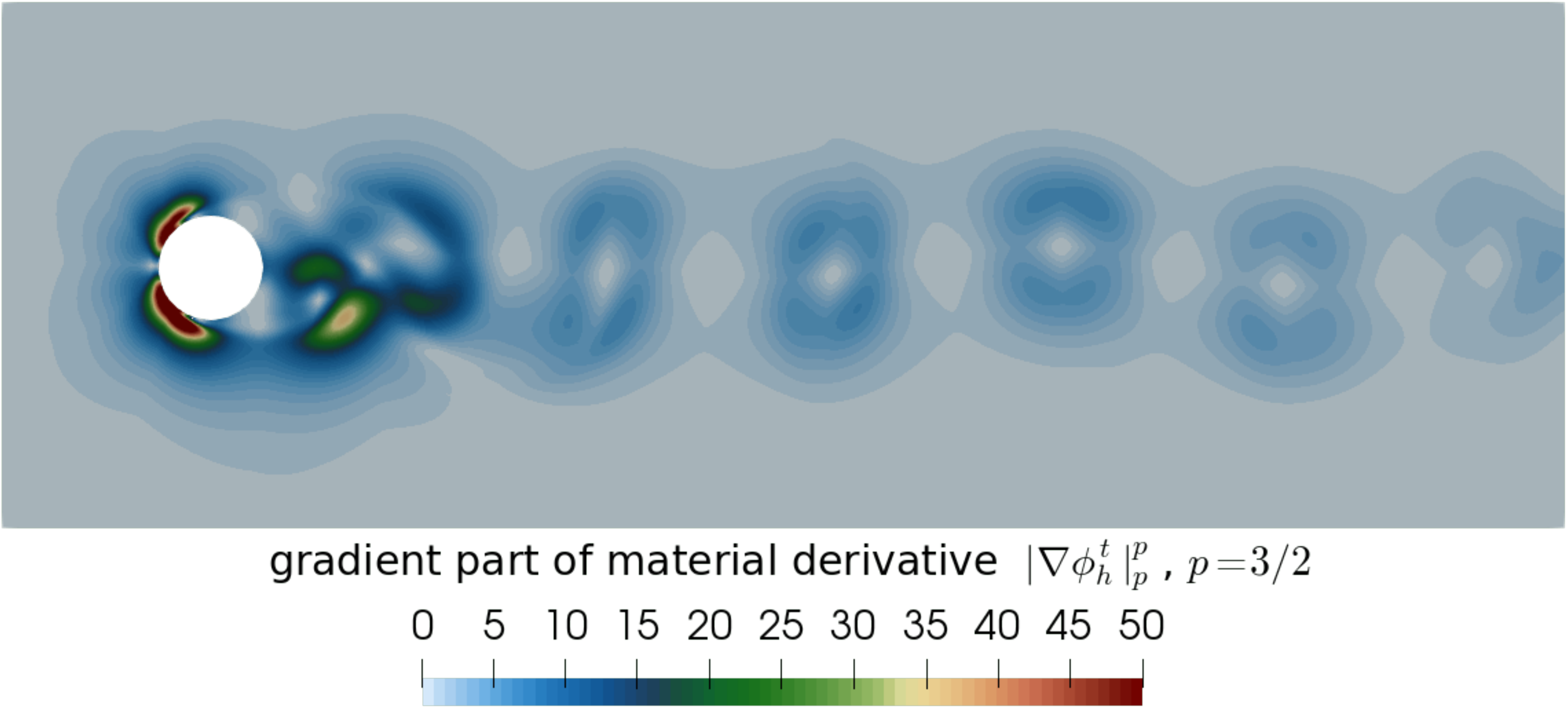}
\caption{Gradient part $\abs{\nabla\phi_h^t}_\nf{3}{2}^\nf{3}{2}$ of discrete Helmholtz--Hodge decomposition of $\ff_h^t$ with exactly divergence-free FEM. The colour bar scaling is chosen identically to that of Figure \ref{fig:Karman_ffh}.}
\label{fig:Karman_GradPhih}
\end{figure}
%------------

Thus, we will investigate, where $\ff_h^t=\partial_t\uu_h+\rb{\uu_h\ip\nabla_h}\uu_h$ is locally a gradient.
The gradient contribution of the discrete material derivative $\ff_h^t$ can be seen in Figure~\ref{fig:Karman_GradPhih} where, for a better comparison, the colour bar scaling is chosen identically to that of Figure~\ref{fig:Karman_ffh}.
Note that the Helmholtz decomposition $\ff_h^t=\HLhd{\ff_h^t}+\nabla\phi_h^t$ is also based on the divergence-free $\HDIV$-DG method.
One can observe that especially in the direct vicinity of the obstacle, there is indeed a significant gradient contribution in the material derivative $\ff_h^t$.
~\\

%------------
\begin{figure}[h]
\centering
\includegraphics[width=0.55\textwidth]{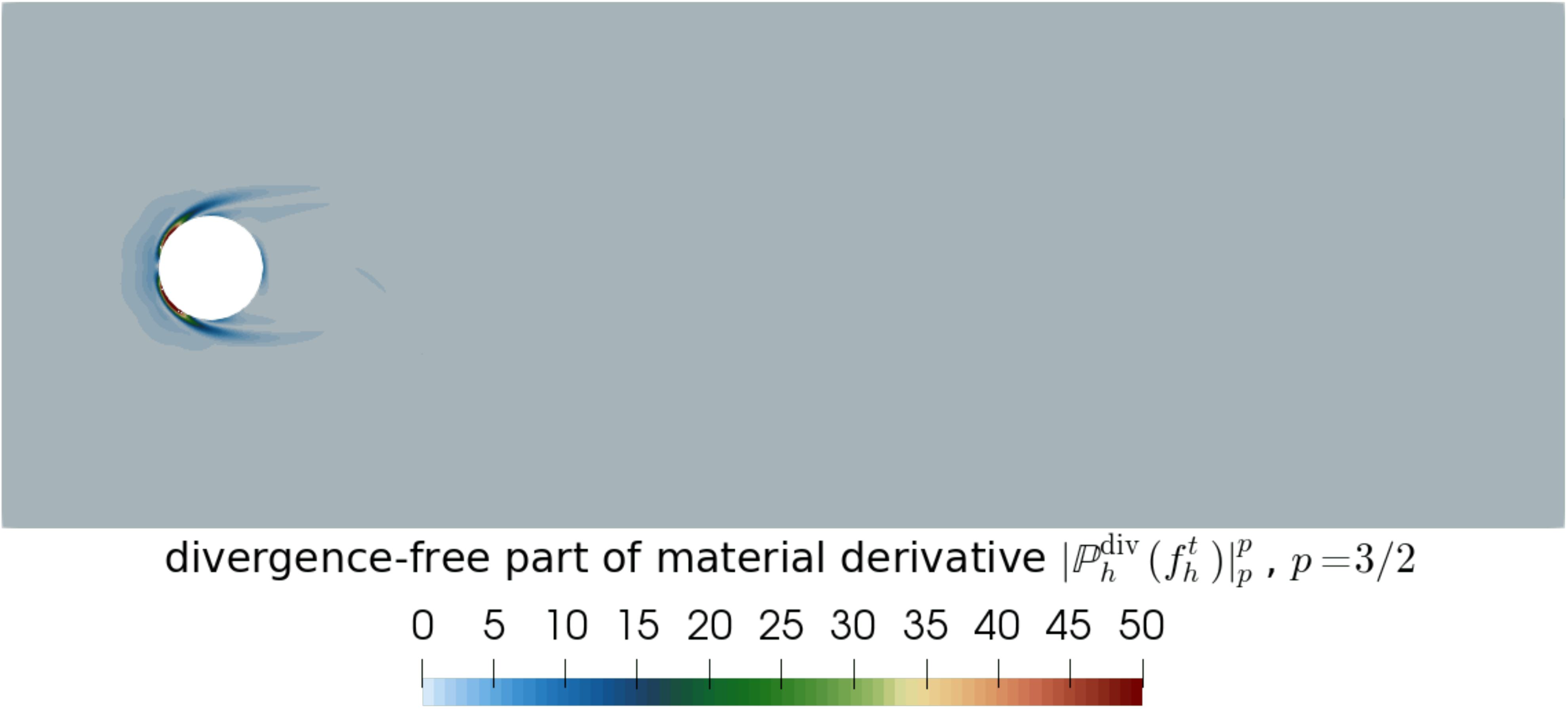}
\caption{Divergence-free part $\abs{\HLhd{\ff_h^t}}_\nf{3}{2}^\nf{3}{2}$ of discrete Helmholtz--Hodge decomposition of $\ff_h^t$ with exactly divergence-free FEM. The colour bar scaling is chosen identically to that of Figure \ref{fig:Karman_ffh}.}
\label{fig:Karman_Phdiv}
\end{figure}
%------------

We have seen that a large part of the material derivative consists of a gradient part.
Consequently, the divergence-free part of $\ff_h^t$ must be small.
Figure~\ref{fig:Karman_Phdiv} shows that this is actually the case for this flow problem.
One can observe that only the upstream side of the obstacle shows a non-zero divergence-free part $\HLhd{\ff_h^t}$.
Note that the friction term $-\nu \Delta \uu$ is indeed a divergence-free contribution to the Navier--Stokes material derivative, which can be strong in the vicinity of the no-slip boundary of the domain.

%------------------------------------------------------------------------------------------------
\subsection{Pressure-robust vs.\ non-pressure-robust methods} 
%------------------------------------------------------------------------------------------------

In contrast to considering the flow itself as in the previous subsection, let us now finally focus on the numerical method for approximating the flow.
In order to answer the second question, namely where a pressure-robust method outperforms a non-pressure-robust one, we solve a second discrete Helmholtz--Hodge decomposition problem of the form \eqref{eq:DiscreteHelmholtzLeray}, but this time with the $\LP{2}{}$-DG `velocity' space choice \eqref{eq:VelSpaceL2}.
Note again that due to using the $\LP{2}{}$-DG method, $\HLhdc{\ff_h^t}$ is not exactly divergence-free, even though $\ff_h^t$ has been computed with the divergence-free $\HDIV$-DG method.
~\\

%------------
\begin{figure}[h]
\centering
\includegraphics[width=0.55\textwidth]{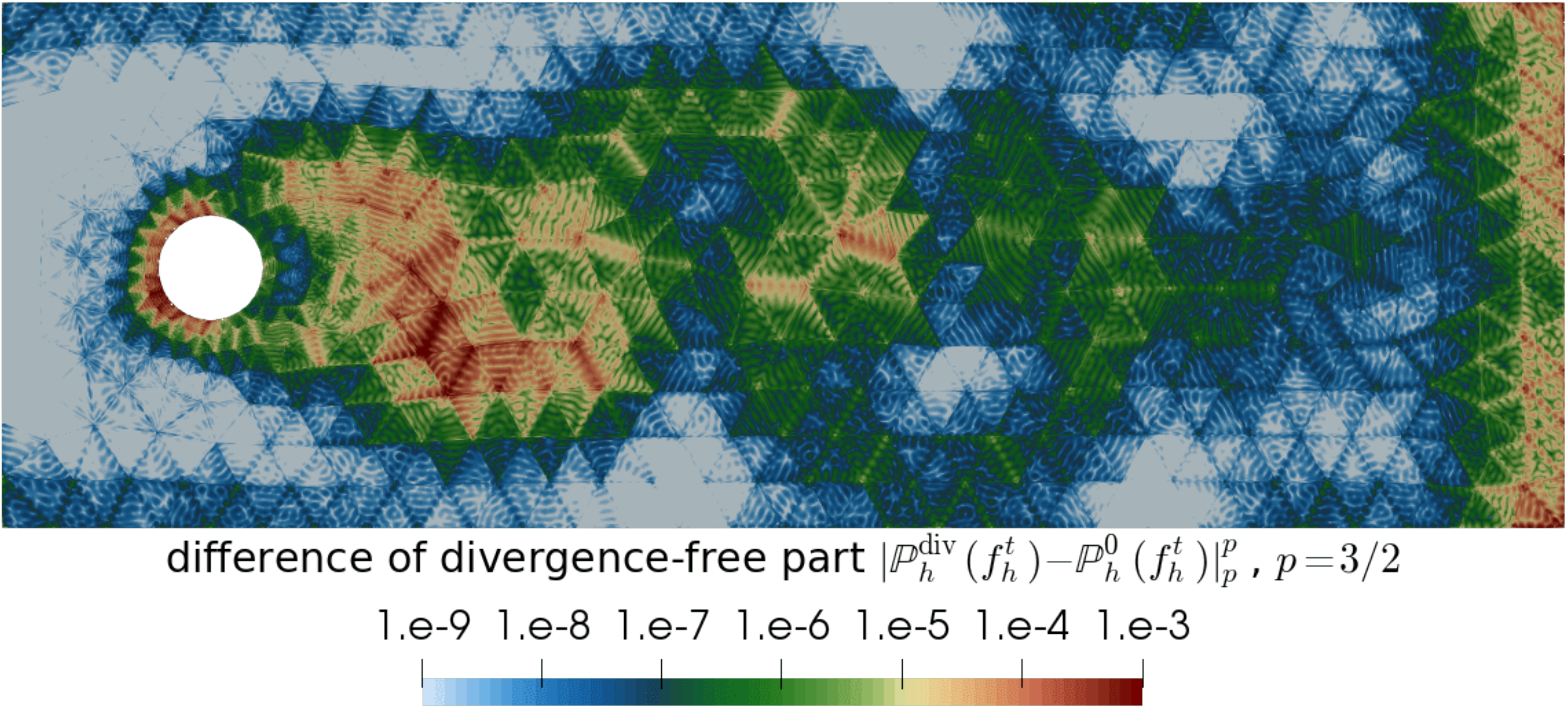}
\caption{Difference $\abs{\HLhd{\ff_h^t}-\HLhdc{\ff_h^t}}_\nf{3}{2}^\nf{3}{2}$ of the two discrete Helmholtz--Hodge projectors of $\ff_h^t$. High values indicate advantageous regions of the pressure-robust discretisation. Note that the colour scale is chosen logarithmically.}
\label{Karman_AdvantagePRobust}
\end{figure}
%------------

Now, in Figure~\ref{Karman_AdvantagePRobust}, the difference of the two discrete Helmholtz--Hodge projectors $\HLhd{\ff_h^t}-\HLhdc{\ff_h^t}$ of the discrete material derivative $\ff_h^t$ highlights the regions in the flow where a pressure-robust method performs better than a non-pressure-robust one.
This is due to the fact that the $\LP{2}{}$ Helmholtz--Hodge projector indicates that the corresponding non-divergence-free method would see a wrong force locally.
Consistent with the analysis of the flow characteristics above, the pressure-robust method is advantageous in regions where the material derivative is dominated by a large gradient contribution in the sense of the Helmholtz decomposition.
This is especially the case in the vicinity of the obstacle, the wake, and interestingly, the outflow.
However, note that the difference between the two Helmholtz--Hodge projections seems to be small compared to the total value of the components of the Helmholtz--Hodge decomposition.
The reason for this behaviour is that on the given mesh and with $k=8$, even the non-pressure-robust Helmholtz projector is comparably accurate.
~\\

%-----------------
\begin{table}[h]
\caption{Convergence behaviour for $\LP{\nf{3}{2}}{}$-norms of the (discrete) material derivative $\ff_h^t=\partial_t\uu_h+\rb{\uu_h\ip\nabla_h}\uu_h$ and its discrete Helmholtz--Hodge projections for different polynomial orders $k\in\set{2,\dots,8}$.}
\label{tab:Norms_kConvergence}
\centering 
\begin{tabular}{lllllllll} 
\toprule
	$k$	
		& 2 
		& 3
		& 4
		& 5
		& 6
		& 7
		& 8
		\\ 
\otoprule
	$\norm{\ff_h^t}_\LP{\nf{3}{2}}{}$		
		& \pgfmathprintnumber[precision=5]{2.87349993e+00}
		& \pgfmathprintnumber[precision=5]{2.97578705e+00}
		& \pgfmathprintnumber[precision=5]{2.96509174e+00}
		& \pgfmathprintnumber[precision=5]{2.96072811e+00}
		& \pgfmathprintnumber[precision=5]{2.96032325e+00}	
		& \pgfmathprintnumber[precision=5]{2.96001111e+00}
		& \pgfmathprintnumber[precision=5]{2.95996950e+00}
		\\
	$\norm{\HLhd{\ff_h^t}}_\LP{\nf{3}{2}}{}$		
		& \pgfmathprintnumber[precision=5]{7.96556428e-01}
		& \pgfmathprintnumber[precision=5]{5.48187542e-01}
		& \pgfmathprintnumber[precision=5]{4.38494468e-01}
		& \pgfmathprintnumber[precision=5]{4.18748340e-01}
		& \pgfmathprintnumber[precision=5]{4.13535207e-01}
		& \pgfmathprintnumber[precision=5]{4.12409309e-01}
		& \pgfmathprintnumber[precision=5]{4.12135449e-01}
		\\		
	$\norm{\HLhdc{\ff_h^t}}_\LP{\nf{3}{2}}{}$		
		& \pgfmathprintnumber[precision=5]{1.06804765e+00}
		& \pgfmathprintnumber[precision=5]{5.75710735e-01}
		& \pgfmathprintnumber[precision=5]{4.41380799e-01}
		& \pgfmathprintnumber[precision=5]{4.19200115e-01}
		& \pgfmathprintnumber[precision=5]{4.13624756e-01}
		& \pgfmathprintnumber[precision=5]{4.12418656e-01}
		& \pgfmathprintnumber[precision=5]{4.12139102e-01}
		\\
%	$\norm{\HLhd{\ff_h^t}-\HLhdc{\ff_h^t}}_\LP{\nf{3}{2}}{}$
	$\norm{\sqb{\helm_h^\dvg-\helm_h^0}\rb{\ff_h^t}}_\LP{\nf{3}{2}}{}$		
		& \pgfmathprintnumber[precision=0,sci]{6.78923884e-01}
		& \pgfmathprintnumber[precision=0,sci]{1.58786854e-01}
		& \pgfmathprintnumber[precision=0,sci]{4.10886877e-02}
		& \pgfmathprintnumber[precision=0,sci]{1.63212054e-02}
		& \pgfmathprintnumber[precision=0,sci]{5.92165742e-03}
		& \pgfmathprintnumber[precision=0,sci]{2.10554255e-03}
		& \pgfmathprintnumber[precision=0,sci]{8.31987666e-04}	
		\\		
\bottomrule		
\end{tabular}
\end{table}
%-----------------

Thus, finally, let us demonstrate that by using lower-order methods, the difference between the discrete Helmholtz--Hodge projectors of the $\LTWO$- and the $\HDIV$-DG methods increases.
Table~\ref{tab:Norms_kConvergence} shows the convergence of the $\LP{\nf{3}{2}}{}$-norm of the material derivative $\ff_h^t=\partial_t\uu_h+\rb{\uu_h\ip\nabla_h}\uu_h$, the discrete Helmholtz--Hodge projectors $\HLhd{\ff_h^t}$ and $\HLhdc{\ff_h^t}$, and their difference $\HLhd{\ff_h^t}-\HLhdc{\ff_h^t}$.
One can see that, as expected, for $k=8$, both methods detect a comparable amount of divergence-free forces in the discrete convective term.
This is a possible explanation why non-pressure-robust methods may work comparably good whenever higher-order methods are considered.
For lower-order methods, on the other hand, the difference between the Helmholtz--Hodge projectors increases considerably.
~\\

%------------
\begin{figure}[h]
\centering
	\includegraphics[width=0.48\textwidth]{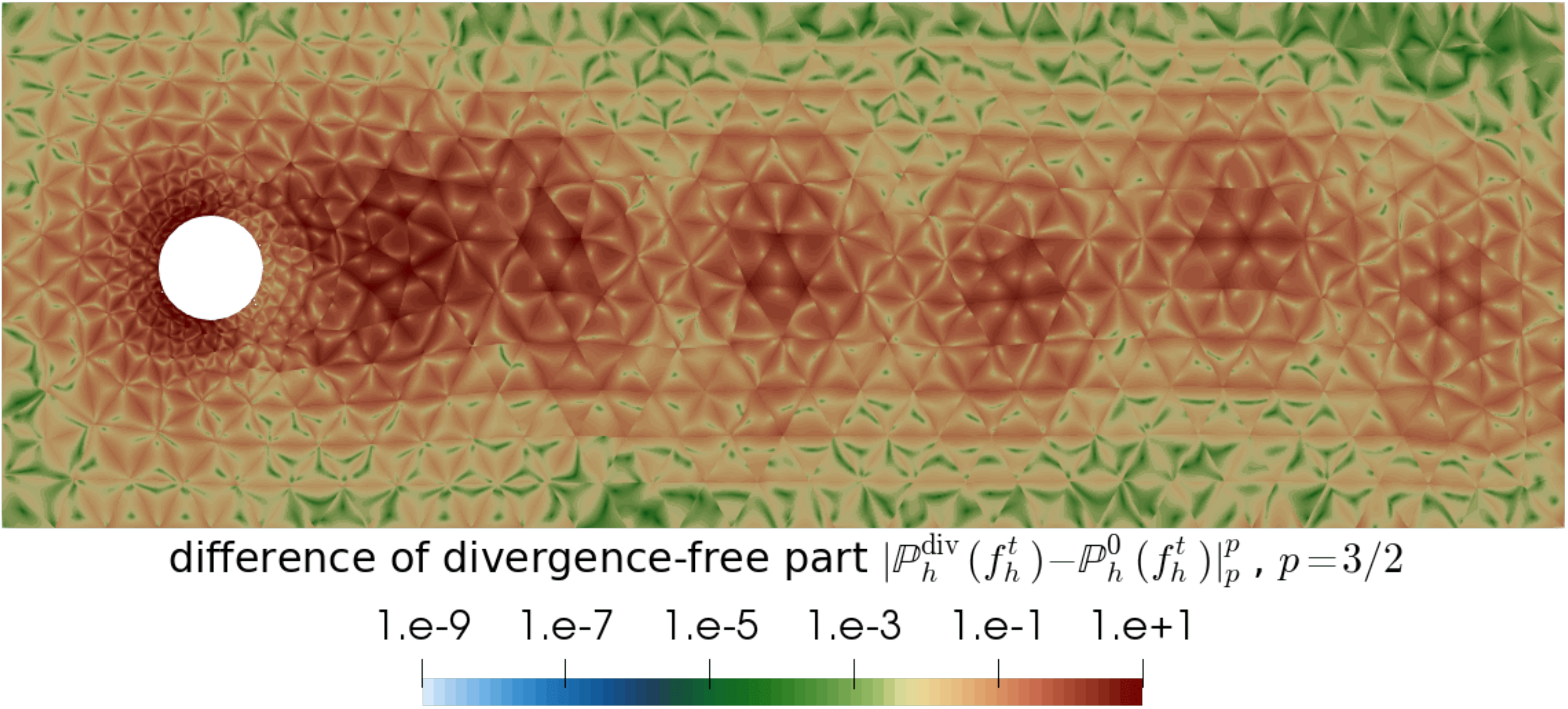} \hspace{5pt}
	\includegraphics[width=0.48\textwidth]{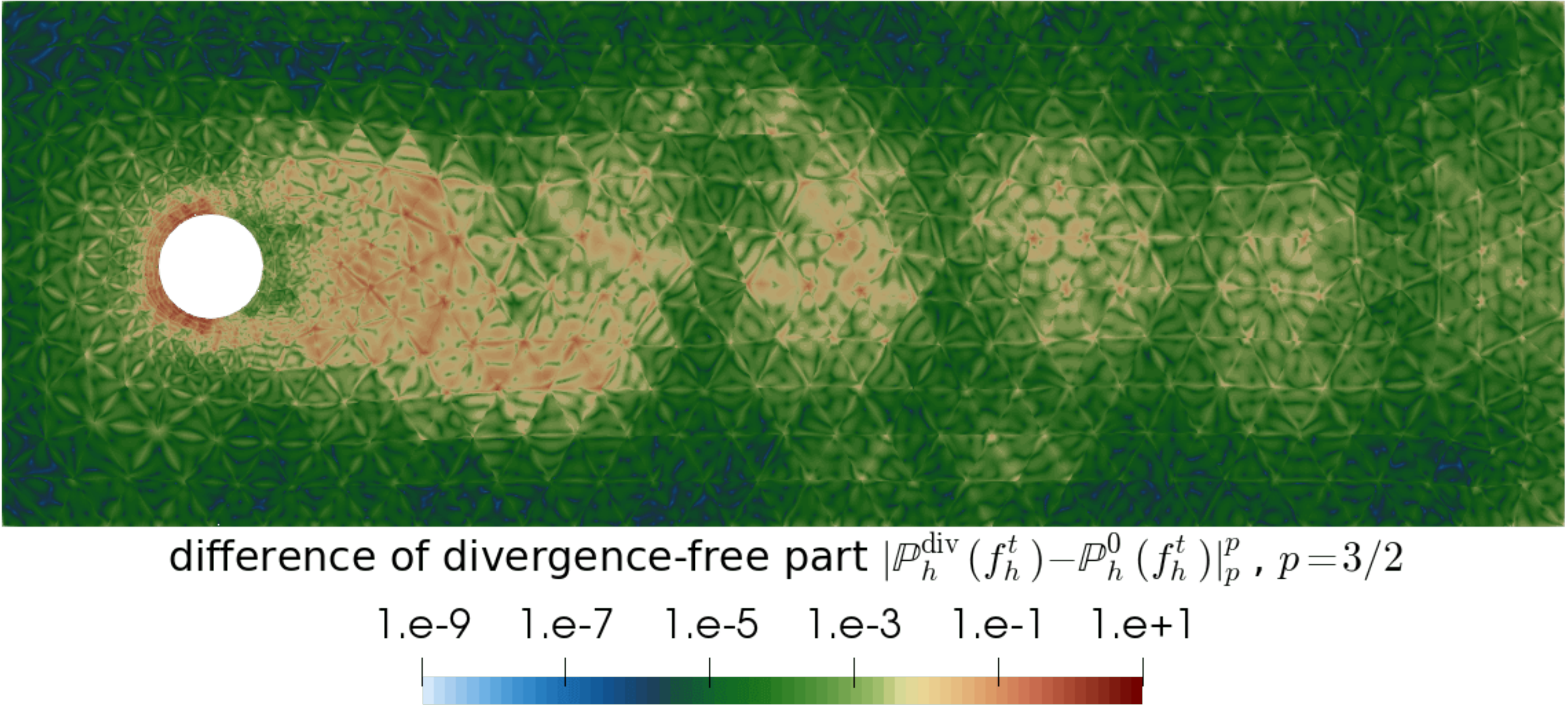} \\
	\includegraphics[width=0.48\textwidth]{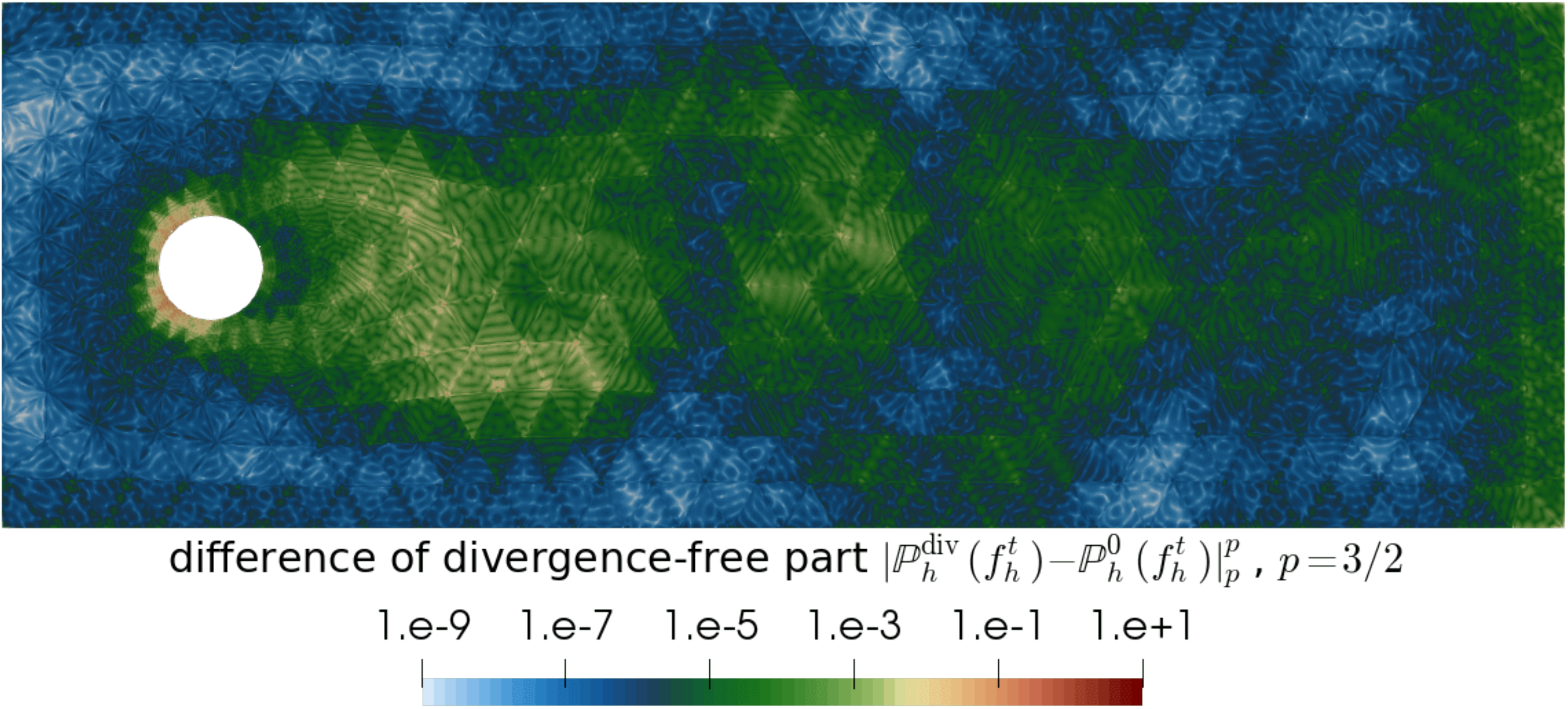} \hspace{5pt}
	\includegraphics[width=0.48\textwidth]{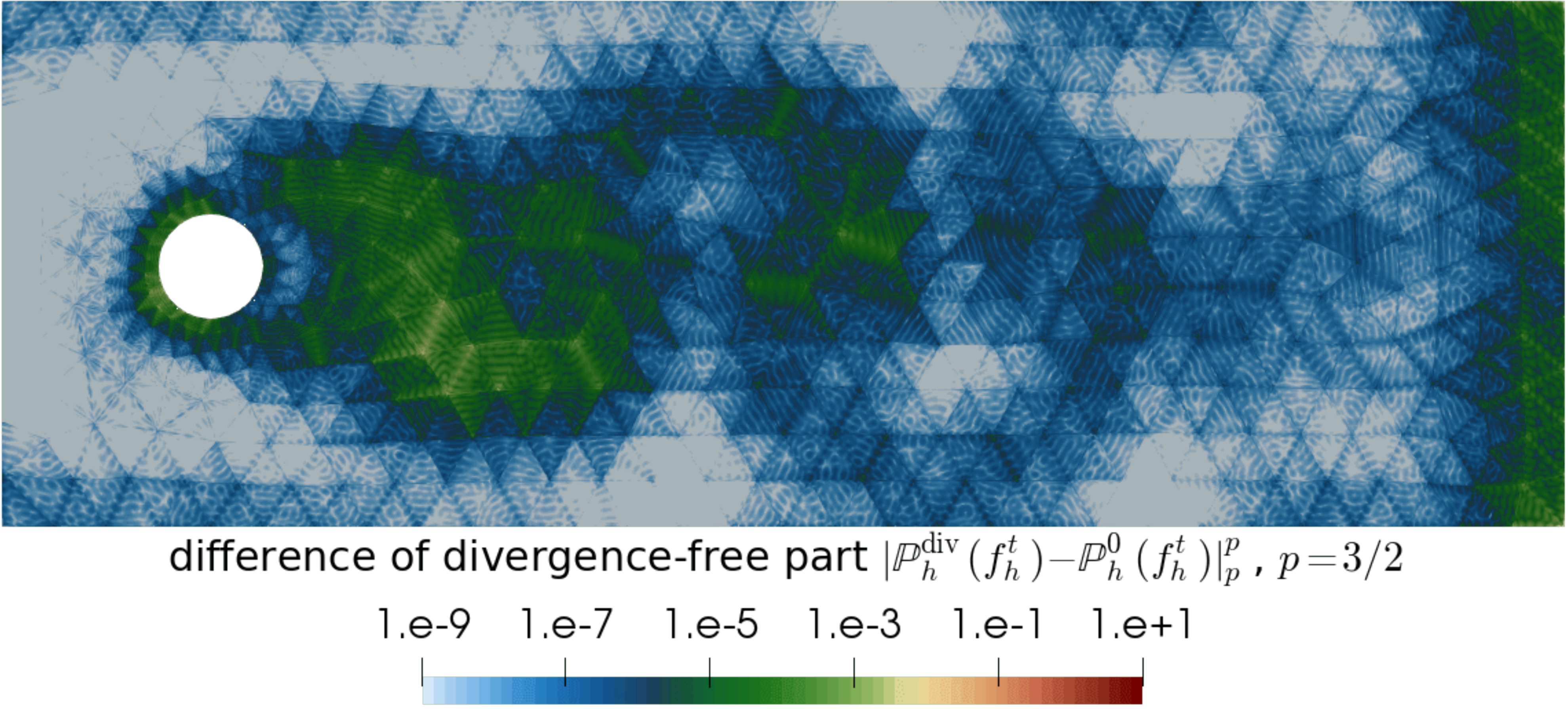} \\
\caption{Difference of discrete Helmholtz--Hodge projectors $\abs{\HLhd{\ff_h^t}-\HLhdc{\ff_h^t}}_\nf{3}{2}^\nf{3}{2}$ of $\ff_h^t$ for different polynomial orders $k\in\set{2,4,6,8}$ (from top left to bottom right). }
\label{fig:Karman_higher_order}
\end{figure}
%------------

Consistent with these observation, Figure~\ref{fig:Karman_higher_order} shows the pointwise plots of the difference between the Helmholtz--Hodge projectors.
It is especially interesting that their difference concentrates in the vicinity of the object.
This potentially means that pressure-robust methods have a higher accuracy near objects which are located in a flow.

%------------------------------------------------------------------------------------------------
%------------------------------------------------------------------------------------------------
\section{Conclusion and outlook} 
\label{sec:Conclusions}
%------------------------------------------------------------------------------------------------
%------------------------------------------------------------------------------------------------
The main intention of this contribution is to illustrate that pressure-robust space discretisations outperform non-pressure-robust space discretisations for incompressible Navier--Stokes flows, especially at high Reynolds numbers. 
The main reason is that in the incompressible Euler limit for $\ff=\zero$ the material derivative is a gradient field, which can be handled more appropriately by pressure-robust methods.
Indeed, in high Reynolds number flows the pressure gradient is typically strong and non-trivial, like in vortex dominated flows where the linear pressure gradient balances the quadratic centrifugal force. 
Further, the superior behaviour of pressure-robust methods relies on a more consistent discrete Helmholtz--Hodge projector, which vanishes for arbitrary gradient fields.
~\\

We want to conclude our contribution with some speculations on the relation between high-order and low-order flow solvers. 
In recent years high-order space discretisation was proposed as an efficient means for the simulation of challenging flow problems, like incompressible Navier--Stokes flows at high Reynolds numbers or real-world applications in computational fluid dynamics \cite{2018JHyDy..30....1X, book:ks:2013}.
The potential benefits of high-order discretisations are suggested to be twofold \cite{2018JHyDy..30....1X}:
\begin{itemize}
	\item exponential convergence under certain regularity assumptions which can be achieved from a clever combination of high-quality mesh generation with $p$-refinement away from domain boundaries;
	\item better diffusion and dispersion properties of the spatially discretised differential operators.
\end{itemize}

While the approximation property argument of $h/p$ methods is convincing, there is an inherent prerequisite in the argument: the availability of high-order boundary approximations of 3D mesh generators \cite{2018JHyDy..30....1X}, since the approximation accuracy of the boundary restricts the maximally achievable approximation order of the entire algorithm. 
	Put differently: if such a high quality mesh does not exist for a given complicated domain (as it often happens in practice; e.g.\ boundaries generated by CAD tools yield spline surfaces of third order), high-order methods will simply not achieve high-order convergence rates, see Subsection \ref{sec:3DEthierSteinmanBC}.
~\\

Our contribution shows that the accuracy of all non-pressure-robust space discretisations suffers from a hidden consistency error, namely the consistency error of the corresponding discrete Helmholtz--Hodge projector, see Lemma \ref{lem:DiscHelmLerayNonDivFreeH1} and Lemma \ref{lem:DiscHelmLerayNonDivFreeDG}.
Evidently, this consistency error can be reduced by high-order methods, due to a simple Taylor expansion argument. 
We conjecture that such hidden consistency errors explain the better diffusion and dispersion properties of high-order methods at least partially.

%------------------------------------------------------------------------------------------------
%------------------------------------------------------------------------------------------------
%\appendix
%------------------------------------------------------------------------------------------------
%------------------------------------------------------------------------------------------------

%------------------------------------------------------------------------------------------------
%------------------------------------------------------------------------------------------------
%\section*{Acknowledgments}
%------------------------------------------------------------------------------------------------
%------------------------------------------------------------------------------------------------
%We get by with a little help from  our friends.
%(The \verb#\thanks# tag in the header is used for acknowledging the support of institutions,
%this section for thanking persons.

%------------------------------------------------------------------------------------------------
%------------------------------------------------------------------------------------------------
% References
%------------------------------------------------------------------------------------------------
%------------------------------------------------------------------------------------------------
% The next command determines the bibliography style. Please do not change this.
\bibliographystyle{plain}

\bibliography{pRobust-HO_BibTeX}

\end{document}